\def\url@leostyle{%
  \@ifundefined{selectfont}{\def\UrlFont{\sf}}{\def\UrlFont{\small\ttfamily}}}
\newtheorem{theorem}{Theorem}[section]
\newtheorem{lemma}[theorem]{Lemma}
\newtheorem{proposition}[theorem]{Proposition}
\newenvironment{remark}[1][Remark]{\begin{trivlist}
\item[\hskip \labelsep {\bfseries #1}]}{\end{trivlist}}
\def\e{\varepsilon}
\def\Z{{\mathbb Z}}
\def\Q{{\mathbb Q}}
\newcommand{\saut}{\vspace{\baselineskip}}
\def\fract#1/#2{\hbox{\leavevmode
  \kern.1em \raise .25ex \hbox{\the\scriptfont0 $#1$}\kern-.1em }\big/
  {\hbox{\kern-.15em \lower .5ex \hbox{\the\scriptfont0 $#2$}} }}
\def\CDZ{$\textrm{C}_2^0$}
\def\CDU{$\textrm{C}_2^1$}
\def\CDD{$\textrm{C}_2^2$}
\def\CDP{$\textrm{C}'_2$}
\def\CDS{$\textrm{C}''_2$}
\def\CUU{$\textrm{C}_1^1$}
\def\CUD{$\textrm{C}_1^2$}
\def\CUT{$\textrm{C}_1^3$}
\def\CIU{$\textrm{C}_\infty^1$}
\def\CID{$\textrm{C}_\infty^2$}
\def\CIT{$\textrm{C}_\infty^3$}
\def\tCL{$3\textrm{CL}$}
\def\Tor{T}
\newcommand{\lens}[2]{L({\scriptstyle #1},{\scriptstyle #2})}
\newcommand{\seifdue}[5]{\big(#1,({\scriptstyle #2},{\scriptstyle #3}),
                       ({\scriptstyle #4},{\scriptstyle #5})\big)}
\newcommand{\seiftre}[7]{\big(#1,({\scriptstyle #2},{\scriptstyle #3}),
                       ({\scriptstyle #4},{\scriptstyle #5}),
                       ({\scriptstyle #6},{\scriptstyle #7})\big)}
\newcommand{\bigu}[4]{\bigcup\nolimits_{{\tiny{\matr {#1} {#2} {#3} {#4}}}\phantom{\Big|}\!\!}}
\newcommand{\matr} [4] {\left(\begin{array}{@{}c@{\ }c@{}} #1 & #2 \\ #3 & #4 \\ \end{array} \right)}
\author{Benjamin Audoux}
\address{Aix Marseille Univ, CNRS, Centrale Marseille, I2M, Marseille, France}
\email{Benjamin.Audoux@univ-amu.fr}
\author{Ana G.\ Lecuona}
\address{Aix Marseille Univ, CNRS, Centrale Marseille, I2M, Marseille, France}
\email{ana.lecuona@univ-amu.fr}
\author{Fionntan Roukema}
\address{School of Mathematics and Statistics, University of Sheffield, UK}
\email{f.roukema@sheffield.ac.uk}
\begin{document}

\title{On hyperbolic knots in $S^{3}$ with exceptional surgeries at maximal distance}

\maketitle

\begin{abstract} 
Baker showed that 10 of the 12 classes of Berge knots are obtained by surgery on the minimally 
twisted 5--chain link. In this article we enumerate all hyperbolic knots in $S^3$ obtained by 
surgery on the minimally twisted 5--chain link that realise the maximal known distances between 
slopes corresponding to exceptional (lens, lens), (lens, toroidal) and (lens, Seifert fibred) 
pairs. In light of Baker's work, the classification in this paper conjecturally accounts 
for ``most" hyperbolic knots in $S^3$ realising the maximal distance between these exceptional 
pairs.
As a byproduct, we obtain that all examples that arise from the
5--chain link actually arise from the magic manifold.
%All examples obtained in our classification are realised by filling the magic manifold.
The  classification highlights additional examples not mentioned in Martelli and Petronio's survey of the exceptional fillings on the magic manifold. Of particular interest, is an example of a knot with two lens space surgeries that is not obtained by filling the Berge manifold. 
\end{abstract}

\section{Introduction}\label{intro} 

Thurston's ground-breaking work in the 1970s showed that every non-trivial knot that is not a satellite is hyperbolic, and that 
non-hyperbolic surgeries on such knots are ``exceptional". These deep and surprising results reinvented the field of hyperbolic geometry and knot theory. 
With the exception of $S^3$, given a non-hyperbolic manifold $M$ the set of all cusped hyperbolic manifolds with $M$ as 
a filling is unwieldy, and we shouldn't expect to be able to write down the set of all hyperbolic manifolds which have 
a lens space filling. However, in light of Thurston's work, it becomes reasonable to ask which hyperbolic knots in $S^3$ 
have a lens space surgery, or which hyperbolic knots have a toroidal filling that is ``far" from the $S^3$ filling. 
This paper looks at hyperbolic knots in $S^3$ that have exceptional fillings that are ``far" apart.

Let $K$ be a knot in $S^{3}$ and consider its exterior $S^{3}\setminus\nu(K)$ where $\nu(K)$ is a small open regular neighborhood of the knot. For a slope $\alpha$ (the isotopy class of an essential simple closed curve) on the boundary of the exterior of $K$, the closed manifold obtained from $\alpha$-surgery (gluing a solid torus to the exterior of $K$ by identifying the meridian to $\alpha$) is denoted by $K(\alpha)$.

Suppose that $K$ is hyperbolic, that is, its complement admits a Riemannian metric of constant sectional curvature $-1$ which is complete and of finite volume. Then Thurston's hyperbolic Dehn surgery theorem implies that all but finitely many slopes produce hyperbolic manifolds via surgery see \cite{b:th} and \cite{Pet}. The exceptional cases are called exceptional slopes and exceptional surgeries. 

It is a consequence of the geometrization theorem that every exceptional 
surgery on a hyperbolic \emph{link} is either $S^3$, a lens space, has an essential surface of non-negative Euler 
characteristic, or fibres over the sphere with three exceptional fibres. We now assign the 
following standard names to these classes of non-hyperbolic
3-manifolds following \cite{Gor'}. We say that a manifold is of type
$D$, $A$, $S$ or $T$ if it contains, respectively, an essential disc,
annulus, sphere or torus, and of type $S^{H}$ or $T^{H}$ if it
contains a Heegaard sphere or torus. Finally we denote by $Z$ the type
of small closed Seifert manifolds. Notice that $S^{H}=\big\{S^{3}\big\}$
and that $T^{H}$ is the set of lens spaces (including $S^{1}\times S^{2}$). 

In the present paper, we are interested in hyperbolic manifolds $X$ with a
torus boundary component $\tau$ supporting a pair $(\alpha,\beta)$ of exceptional slopes whose
associated surgeries lead respectively to manifolds of types $\mathcal{C}_1$ and $\mathcal{C}_2$, where $\mathcal{C}_1, \mathcal{C}_2\in\big\{S^H, S, T^H, T, D, A, Z\big\}$, the set of exceptional type manifolds described above. We will summarize this situation by writing $(X,\tau;\alpha,\beta)\in(\mathcal{C}_1,\mathcal{C}_2)$. The distance (minimal geometric intersection) between two slopes
$\alpha$ and $\beta$ on a torus is denoted by $\Delta(\alpha,\beta)$. The
\emph{maximal distance between types of exceptional manifolds}
$\mathcal{C}_1$ and $\mathcal{C}_2$ is defined as the
$\max\big\{\Delta(\alpha,\beta) \, |\, (X, \tau;\alpha,\beta)\in
(\mathcal{C}_1,\mathcal{C}_2)\big\}$ and denoted by $\Delta(\mathcal{C}_1,\mathcal{C}_2)$.

Quite some energy has been devoted in the literature to the understanding of exceptional slopes on hyperbolic manifolds. In the case  of hyperbolic knot 
exteriors there are strong restrictions on their exceptional surgeries or  fillings. The $S^H$--filling is unique \cite{GL} and no knot exterior 
has a filling with an essential annulus or disc. Conjecturally no 
hyperbolic knot exterior has a reducible surgery \cite{Cabling}. So, there are nine 
possible exceptional pairs obtained by surgery on a hyperbolic knot in
$S^3$, namely the $(S^H,T^H)$, $(S^H,T)$, $(S^H,Z)$, 
$(T^H,T^H)$, $(T^H,T)$, $(T^H,Z)$, $(T,T)$, $(T,Z)$ and $(Z,Z)$ exceptional pairs. 

The $(S^H,T)$ pairs have been completely enumerated \cite{tor_class}. Examples of $(S^H,Z)$ pairs have been constructed, see for example \cite{EM,excep_slopes}. 
The exceptional surgeries on the figure eight knot tell us that 
$\Delta(T^H,Z),\Delta(T,Z),\Delta(Z,Z)>5$, and from \cite{Agol} we know that there are only a finite number of examples realising these distances. The $(S^H,T^H)$ pairs are conjecturally a subset of the Berge knots classified in \cite{Berge}. It follows that, since  the remaining three cases all involve a $T^H$ surgery, an enumeration of the remaining 
three exceptional pairs is conjecturally an enumeration of a subset of Berge knots. Baker showed \cite{Baker} that 10 of the 12 
classes of Berge knots are obtained by surgery on the minimally twisted 5--chain link (5CL, see Figure~\ref{chain_links_fig}). So, conjecturally, most of the 
hyperbolic knots realising $(T^H,T^H)$, $(T^H,T)$ and $(T^H,Z)$ exceptional pairs of slopes are 
obtained by surgery on 5CL. 

In this article we enumerate all hyperbolic knots obtained from
surgery on the 5CL that realise the maximum known distance between the
exceptional filling types. We completely classify the knots arising in
this manner and having either two different lens space surgeries; a
lens space surgery and a toroidal surgery at distance 3; or a lens space surgery and a small Seifert surgery at distance 2. In light of Baker's work, the classification in this article 
conjecturally accounts for most examples of hyperbolic knots with an exceptional pair of slopes at maximal distance.  Our main result is the following:

\begin{figure}
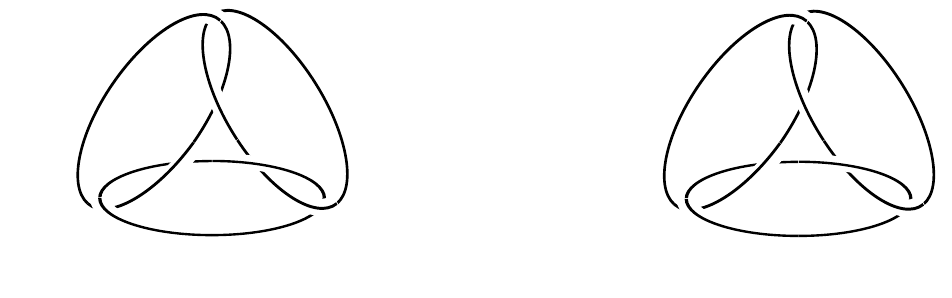
\caption{Surgery presentation for all (distinct) hyperbolic knots  with two lens space fillings obtained by surgery 
on the minimally twisted 5--chain link.}
\label{lens_lens_fig}
\end{figure}

\begin{figure}
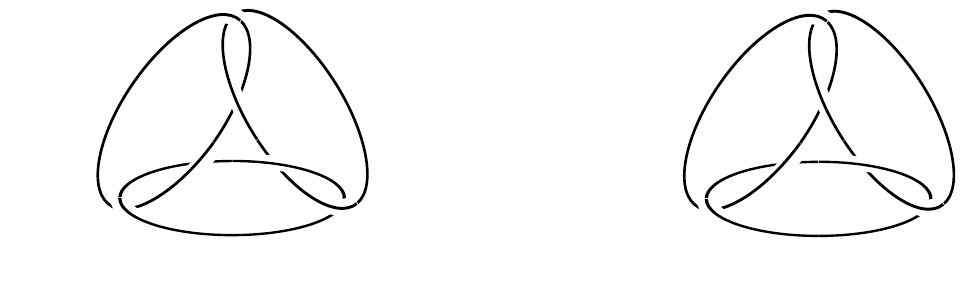
\caption{Surgery presentation for all (distinct) hyperbolic knots with
  a lens space and a toroidal filling at distance 3, or a lens space
  and a Seifert filling at distance 2, obtained by surgery on 5CL.}
\label{lens_toroidal_Z_fig}
\end{figure}

\begin{theorem}\label{thm1}
Let $K$ be a hyperbolic knot in $S^3$ obtained by surgery on the minimally twisted 5--chain 
link with two exceptional slopes $\alpha$ and $\beta$, and such that $K(\alpha)$ is a 
lens space.
\begin{itemize} 
\item If $K(\beta)$ is a lens space, then $K$ is found in Figure \ref{lens_lens_fig}.
\item If $K(\beta)$ is toroidal, then the distance between $\alpha$ and $\beta$ is at most three and 
if the distance equals three, then $K$ is found in Figure \ref{lens_toroidal_Z_fig}.
\item If $K(\beta)$ fibres over the sphere with three exceptional 
fibres then the distance between $\alpha$ and $\beta$ is at most two and 
if the distance equals two then $K$ is found in Figure \ref{lens_toroidal_Z_fig}.
\end{itemize}
\end{theorem}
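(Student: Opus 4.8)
The plan is to read everything off the hyperbolic $5$--cusped manifold $N = S^3\setminus\nu(5\mathrm{CL})$. A knot $K$ of the type in the statement is precisely the core of the last solid torus when four of the five cusps of $N$ are filled along slopes $(\gamma_1,\dots,\gamma_4)$ whose net effect is to return $S^3$ as the ambient manifold, the fifth cusp $\tau$ then carrying the knot slopes. The key observation is that both $K(\alpha)$ and $K(\beta)$ are themselves fillings of \emph{all five} cusps of $N$: they agree on the four ``ambient'' slopes and differ only on $\tau$, where they take the values $\alpha$ and $\beta$. Thus the three bullet points reduce to enumerating pairs of exceptional fillings of $N$ that share four slopes producing $S^3$, that yield a hyperbolic knot exterior after those four fillings, and for which the $\alpha$--filling is a lens space.

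First I would fix a meridian--longitude framing on each cusp of $N$ and record the symmetry group of the minimally twisted $5$--chain link, which acts on the cusps and on slopes; this is what keeps the enumeration finite in practice, since it collapses the five-fold choice of which cusp plays the role of $\tau$ and permutes the ambient slopes. I would then use the classification of exceptional Dehn fillings of $N$ (the census of non-hyperbolic fillings of the $5$CL complement), restricted to those tuples in which four coordinates fill to $S^3$ and the core of the remaining unfilled cusp has hyperbolic exterior, discarding the configurations where the four ambient fillings already destroy hyperbolicity, as those do not produce hyperbolic knots.

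With this finite (up to symmetry and up to one-parameter families) list in hand, the remaining work is bookkeeping: for each configuration in which the $\alpha$--filling is of type $T^H$, read off the type of the $\beta$--filling, compute $\Delta(\alpha,\beta)$ directly from the two slopes on $\tau$, and retain the $(T^H,T^H)$ pairs, the $(T^H,T)$ pairs at distance $3$, and the $(T^H,Z)$ pairs at distance $2$. The one-parameter families must be handled uniformly rather than case by case: I would recognise each family, e.g.\ the $k$--family of Figure~\ref{lens_lens_fig} and the $n$--family of Figure~\ref{lens_toroidal_Z_fig}, by simplifying its surgery diagram with Rolfsen twists and slam-dunks until the filling type and the intersection number are manifest for all parameter values simultaneously. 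Finally I would use the isometries coming from the link's symmetry (together with an isometry check on the resulting cusped manifolds) to discard duplicate surgery descriptions, so that the figures list genuinely distinct knots; along the way one sees that the surviving configurations are all supported on the sublink that reduces $N$ to the magic manifold, yielding the byproduct advertised in the abstract.

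The main obstacle is controlling the size of the enumeration while certifying the infinite families. A machine can verify the finitely many sporadic fillings, and the symmetry reduction keeps the case list manageable, but the heart of the argument is the uniform treatment of the parametrised families: one must show that an entire $\Z$--indexed family consists of hyperbolic knots, with no accidental degenerations, and that it realises the claimed filling types and the asserted distance. This cannot be delegated to a finite computation, and instead requires the explicit Kirby-calculus simplifications together with the distance bounds (at most $3$ in the toroidal case and at most $2$ in the Seifert case) that pin down exactly when the maximum is attained.
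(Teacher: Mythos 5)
Your overall framing agrees with the paper's: a hyperbolic knot obtained by surgery on 5CL is the core of the unfilled cusp of a hyperbolic $M_5(\gamma_1,\dots,\gamma_4)$, and the three bullet points amount to enumerating exceptional triples of types $(S^H,T^H,T^H)$, $(S^H,T^H,T)$ and $(S^H,T^H,Z)$ on such knot exteriors. The genuine gap is in how you propose to execute the enumeration. There is no flat ``census of non-hyperbolic fillings of the 5CL complement'' that one can filter: the set of exceptional filling instructions on $M_5$ is infinite, and the classification in \cite{excep_slopes} is organised recursively --- an instruction either \emph{factors through} $M_4$ or lies in finitely many explicit parametrised tables; instructions on $M_4$ in turn either factor through $M_3$ or lie in tables; and the exceptional fillings of the magic manifold $N$ are classified separately in \cite{Magic}. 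Consequently the heart of the proof is not bookkeeping over a list but a three-stage reduction: one must \emph{prove}, by ruling out case by case all the tabulated families at the $M_5$ and $M_4$ levels, that every triple of the required type factors through $M_4$ and then through $M_3$ (this is the content of Sections~\ref{lens_lens5CL_subsec}, \ref{lens_lens4CL_subsec}, \ref{T_5CL}, \ref{5cl_z} and \ref{4cl_z} of the paper). The fact that everything is supported on the magic manifold is thus the central argument, not an observation made ``along the way''.

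Second, the tools you propose for the infinite families would not suffice. Whether a member of a parametrised family yields the required filling types at the required distance is a number-theoretic condition read off the tables --- e.g.\ $|rt-su|=1$, $3n(t+2u)-2t-u=\pm 1$, or $\big(1-m(n+4)\big)n=m\pm1$ --- and the enumeration is completed by solving these elementary diophantine equations (Lemmata~\ref{lem:PlentyEq} and~\ref{lem:QuadEq}) together with the bound $\Delta(S^H,T^H)=\Delta(T^H,T^H)=1$ of \cite{CGLS}; Rolfsen twists and slam-dunks do not decide for which parameter values such an equation holds. Likewise, hyperbolicity throughout a family follows from \cite[Theorem~1.2]{Magic} rather than from a computation, and distinctness of the families is established by comparing their sets of exceptional fillings (number of toroidal fillings, orders of lens-space fundamental groups), not by isometry checks. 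Finally, note that the bound $\Delta(\alpha,\beta)\le 2$ in the Seifert case is itself part of the statement being proved: unlike $\Delta(T^H,T)\le 3$, which can be quoted from \cite[Theorem~1]{excep_slopes}, it is only obtained through the full case analysis culminating in Proposition~\ref{prop:Dist>2}, so your plan, which takes both distance bounds as inputs that ``pin down exactly when the maximum is attained'', is circular at this point.
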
 

Given $M$, an orientable cusped hyperbolic $3$--manifold and $\tau$ a fixed torus component of the boundary of its compactification, it is a consequence of \cite{Lack:setsize}  that 8 is a universal upper bound for $\Delta(\alpha_1, \alpha_2)$ for each exceptional pair $(M, \tau; \alpha_1, \alpha_2)$. The celebrated Gordon-Luecke theorem \cite{GL} can be formulated by saying that 
$\Delta(S^H, S^H)=0$, the Cabling conjecture by saying that $\Delta(S, S^H)=-\infty$ 
\cite{Cabling}, the Berge conjecture implies that the Berge knots in \cite{Berge} 
contain all exceptional pairs of type $(S^H, T^H)$, and the theorem of \cite{tor_class} 
by saying that the knots realizing $\Delta(\alpha_1, \alpha_2)=\Delta(S^H, T)$ are 
precisely the Eudave-Mu\~noz knots. 

It is natural to generalise these types of questions by asking whether we can find 
$\Delta(\mathcal{C}_1, \mathcal{C}_2)$ for each pair of classes 
$\mathcal{C}_1, \mathcal{C}_2\in\big\{S^H, S, T^H, T, D, A, Z\big\}$, and whether we can enumerate 
all $(M, \tau;\alpha_1, \alpha_2)$ of type $(\mathcal{C}_1, \mathcal{C}_2)$ with 
$\Delta(\alpha_1, \alpha_2)=\Delta(\mathcal{C}_1, \mathcal{C}_2)$. A great deal 
is known, see \cite{essential:tori} or \cite{Gor} for an overview. 

If a knot in $S^3$ is not a torus knot or a satellite knot then its exterior is a hyperbolic 3-manifold. 
We can consider all $(M_K, \tau;\alpha_1, \alpha_2)$ when $M_K$ is the exterior of a knot $K$ 
in $S^3$ and ask what is $\Delta(\mathcal{C}_1, \mathcal{C}_2)$ and which $(M_K, \tau;\alpha_1, \alpha_2)$ 
of type $(\mathcal{C}_1, \mathcal{C}_2)$ have $\Delta(\alpha_1, \alpha_2)=\Delta(\mathcal{C}_1, \mathcal{C}_2)$ 
for this subclass of hyperbolic manifolds. Of course, this is the same as asking what is 
the greatest value of $\Delta(\alpha_2, \alpha_3)$ among exceptional triples 
$(M, \tau; \alpha_1, \alpha_2, \alpha_3)$ of type $(S^H, \mathcal{C}_1, \mathcal{C}_2)$ and which 
$(M, \tau; \alpha_1, \alpha_2, \alpha_3)$ realise the maximum $\Delta(\alpha_2, \alpha_3)$. 
From this perspective, we enumerate in this article such $(S^H, \mathcal{C}_1, \mathcal{C}_2)$ triples obtained from the 
minimally twisted 5--chain link.

%%%%%%%%%%%%%%%%%%%%%%%%%%%%%%

In order to state some of the noteworthy remarks coming from the analysis done to establish Theorem~\ref{thm1} we need to introduce some more notation. The chain links that are ubiquitous throughout this paper are depicted in Figure \ref{chain_links_fig}. We keep the notation 
conventions of \cite{MPR} and denote the minimally twisted 5--chain link by 5CL and 
its exterior by $M_5$; the 4--chain link is denoted by 4CL and its
exterior is denoted by $M_4$. 
The minimally twisted 4--chain link $\text{M4CL}$, as
well as its exterior $F$, will also appear extensively in the text.
A $(-1)$--surgery on any component of 4CL gives a 3--chain link 3CL, whose exterior is denoted by $M_{3}$. We closely reference the tables from \cite{Magic} which give a classification of the exceptional surgeries on 
the mirror $\text{3CL}^{\ast}$ shown in Figure \ref{chain_links_fig}. The exterior of this link is 
the ``magic manifold'' \cite{GW} which we will denote by $N$.

\begin{figure}[h!]
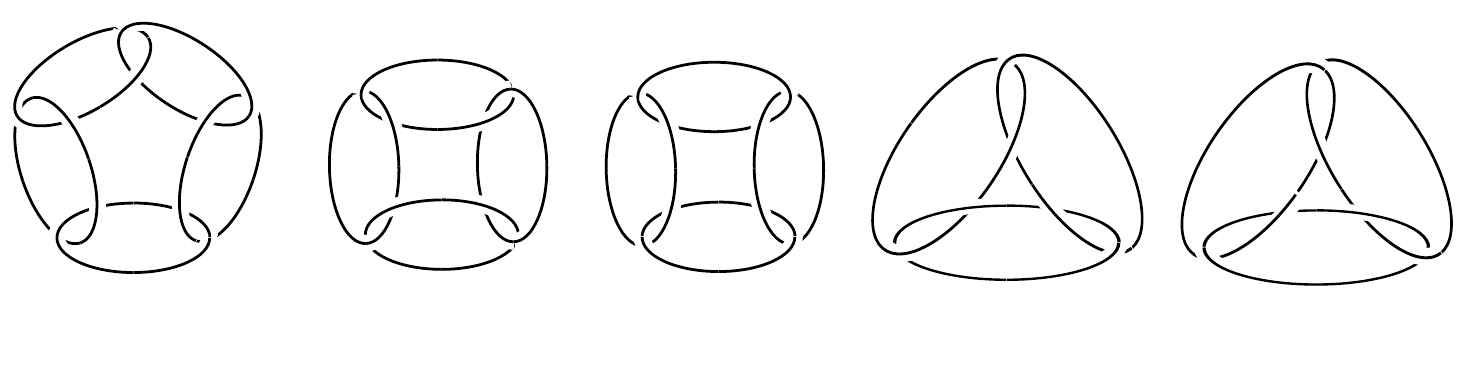
\caption{The minimally twisted 5--chain link 5CL, the 4--chain link 4CL, the minimally twisted 4--chain link M4CL and the 3--chain links 3CL and $\text{3CL}^{\ast}$. The exteriors of these links are respectively called $M_{5},M_{4},F,M_{3}$ and $N$.}
\label{chain_links_fig}
\end{figure}

The knots in Figures~\ref{lens_lens_fig} and~\ref{lens_toroidal_Z_fig} are described by giving a filling instruction on two of the 3 boundary components of the magic manifold. The exceptional slopes on the knots $N(-\frac32,-\frac{14}5)$ and 
$N(-\frac52,\frac{1-2k}{5k-2})$ from Figure \ref{lens_lens_fig} and
the corresponding fillings 
are found in Theorem~\ref{lens_lens_thm}. The exceptional slopes on 
$N(-1+\frac1n,-1-\frac1n)$ and $N(-1+\frac1n,-1-\frac1{n-2})$ from Figure \ref{lens_toroidal_Z_fig} 
and the corresponding fillings are found in Theorem \ref{lens_toroidal_prop}. 
Theorems~\ref{lens_lens_thm} 
and \ref{lens_toroidal_prop} go further and show that the three families of knots and the isolated 
example shown in Figures \ref{lens_lens_fig} and \ref{lens_toroidal_Z_fig} are all distinct knots. 

There is a unique hyperbolic knot in a torus with two non-trivial surgeries \cite{Berge_torus}; 
the exterior of this knot is called the Berge manifold, which will appear frequently in the text. It can be obtained by filling one of the 3 boundary components of the magic manifold $N$. Indeed, the Berge manifold is $N(-\frac{5}{2})$. Cutting, twisting and filling 
the boundary of the torus yields an infinite family of inequivalent knots in $S^3$ with two 
lens space fillings. This family is precisely the set of $N(-\frac52,\frac{1-2k}{5k-2})$ 
from Theorem \ref{thm1}.
%We find particularly interesting the following fact.
It should be highlighted that the example $N(-\frac32,-\frac{14}5)$ is not obtained by surgery 
on the Berge manifold (Theorem \ref{lens_lens_thm}).

The article \cite{3lens} contains a complete description of all surgeries on the 5CL with 
three cyclic fillings, which are fillings leading to type $S^H$ or
$T^H$ manifolds. It is a more general question than our quest to find \emph{knot exteriors} on the 5CL with two cyclic fillings and the techniques used in \cite{3lens} to reduce the argument to an analysis of the fillings on the Magic manifold are different from ours. A translation of our results into the language of \cite{3lens} follows. The family $\{N(-\frac52,\frac{1-2k}{5k-2})\}$ is 
the family $\{B_{(2k-1)/(5k-2)}\}\subset \{B_{p/q}\}$ from \cite{3lens}, and 
the isolated example $N(-\frac32,-\frac{14}5)$ is $A_{2,3}$ from \cite{3lens}.
As a final remark, let us emphasize the fact that the family $N(-1+\frac1n,-1-\frac1n)$ and its exceptional 
slopes and fillings are highlighted in \cite[Table~17]{Magic}, but the
distinct family $N(-1+\frac1n,-1-\frac1{n-2})$ is not.

\subsection{Article structure} The results in this article are obtained 
by a careful analysis of the classifications of exceptional sets of slopes 
on surgeries of the minimally twisted 5--chain link given in \cite{Magic} and 
\cite{excep_slopes}. The work done there, translates the enumeration of exceptional 
pairs realising maximal distances into finding the solutions to a (long) list 
of elementary diophantine equations. The translation necessitates a table by table analysis of the work given in 
\cite{Magic} and \cite{excep_slopes}. A collection of easy (but technical) 
lemmata in the Appendix facilitates the translation and reduces the amount of 
work needed. The proofs of the main results are littered with references to 
results in the Appendix, \cite{Magic}, and \cite{excep_slopes}. Most
equalities and isomorphisms shall, for instance, be subscripted by a
label that refers to the Appendix. Therefore, 
this article is best read with both articles and the Appendix in-hand. 

Section \ref{notation_sec} sets out the notation and conventions used 
throughout this article. Section \ref{lens_lens_sec} gives an enumeration of all 
exceptional $(S^H,T^H,T^H)$ triples obtained by surgery on 5CL. 
Section \ref{lens_toroidal_sec} gives an enumeration of all 
exceptional $(S^H,T^H,T)$ triples obtained by surgery on 5CL. 
Section \ref{lens_Z_sec} gives an enumeration of all 
exceptional $(S^H,T^H,Z)$ triples obtained by surgery on 5CL. 
Sections \ref{lens_lens_sec}-\ref{lens_Z_sec} all proceed in the same way. The 
sections start with a precise statement about the enumeration of the exceptional triples.  
The results are established by first showing that all examples are obtained by 
surgery on 4CL, and then showing that all examples are obtained by surgery on 3CL.  
The final sections then enumerate all examples of exceptional triples obtained by 
surgery on 3CL.  

\subsection{Acknowledgements and remarks} We are thankful to Daniel Matignon and Luisa Paoluzzi for fruitful conversations. We also want to thank Ken Baker and Marc Lackenby for interesting discussions. The second author is partially supported by the Spanish GEOR-MTM2014-55565. The third author was supported as a member of the Italian FIRB project 
`Geometry and topology of low-dimensional manifolds' (RBFR10GHHH), internal funding 
from the University of Sheffield and the French ANR research project ``VasKho'' ANR-11-JS01-002-01.

\section{Notation and conventions}\label{notation_sec}

In this section we set out notation and conventions used throughout the article. 
We will use the conventions on surgery instructions set out in \cite{excep_slopes} which 
we briefly outline. For more detailed descriptions, please refer to
\cite{excep_slopes}. Given an orientable compact 3-manifold $X$ such
that $\partial X$ is a collection of tori, we use the term
\emph{slope} to indicate the isotopy class of a non-trivial unoriented
essential simple closed curve on a component of $\partial X$. After fixing a choice of meridian
and longitude on a boundary torus, a slope is naturally identified
with an element in $\Q\cup\{\infty\}$. A \emph{filling instruction}
$\alpha$ (also denoted by $\mathcal F$) for $X$ is a set consisting of either a slope or the empty
set for each component of $\partial X$. The chain links have a rotational symmetry which allow us to unambiguously choose any component as the first component and order the remaining cyclically in the anticlockwise direction. The filling instructions shall then be identified with tuples of elements in $\mathbb{Q}\cup\{\infty\}$. The \emph{filling} $X(\alpha)$ is the manifold obtained by gluing one solid torus to $\partial X$ for each non-empty slope in $\alpha$. The meridian of the solid torus is glued to the slope.

A very related concept to that of a filling is a \emph{surgery} on a link $L\subset S^{3}$. By definition, a surgery on $L$ is a filling of the exterior of $L$, $S^{3}\setminus\nu(L)$, where $\nu(L)$ is an open regular neighborhood of $L$. By a \emph{surgery instruction} for $L$ we mean a filling instruction on the exterior of $L$.

In the present article we will be concerned with \emph{exceptional
  fillings}. If the interior of $X$ is hyperbolic but the interior of
$X(\alpha)$ is not, we say that $\alpha$ is an \emph{exceptional
  filling instruction} for $X$ and $X(\alpha)$ is an exceptional
filling. If the resulting manifold has a cyclic fundamental group,
that is if it belongs to $S^H$ or $T^H$, then we say that it is furthermore
\emph{cyclic}. We follow the notation used to describe the sets of exceptional slopes set out in 
\cite{Gor}. The set of exceptional slopes on a fixed toroidal boundary component 
$\tau$ of a hyperbolic 3-manifold $X$ is denoted by $E_{\tau}(X)$, and the cardinality 
of $E_{\tau}(X)$ by $e_{\tau}(X)$. In our case $\tau$ will refer to the $n^{th}$ component 
of the chain link with $n$ components and is dropped throughout the article. A word of caution: 
when $\mathcal F$ is a filling instruction on $M_5$, we write the elements of $E(M_5(\mathcal F))$ with 
respect to the choice of bases on $M_5$ (\emph{and not} $M_5(\mathcal F)$!). 

Beyond the exceptional pairs $(\mathcal{C}_1,\mathcal{C}_2)$ explained in the introduction, we will work also with \emph{exceptional $(\mathcal{C}_1,...,\mathcal{C}_n)$ 
$n$-tuples}. By this we mean the following: if $X$ is a hyperbolic 3-manifold and $\alpha_1,\dots,\alpha_n$ are exceptional slopes on a fixed toroidal boundary component of $X$, with $X(\alpha_i)$ a manifold 
of type $\mathcal{C}_i$, then we say that 
$(X,\alpha_1,...,\alpha_n)$ is an exceptional $(\mathcal{C}_1,...,\mathcal{C}_n)$ 
$n$-tuple and write $(X,\alpha_1,...,\alpha_n)\in(\mathcal{C}_1,...,\mathcal{C}_n)$. There is a notion of equivalence among exceptional tuples. We will say that two exceptional $n$-tuples $(X_1,\alpha_1,...,\alpha_n)$ and $(X_2,\beta_1,...,\beta_n)$ 
are \emph{equivalent} if there exists a homeomorphism $h:X_1\rightarrow X_2$ with 
$X_2(h(\alpha_i))=X_2(\beta_i)$. When two $n$-tuples $(X_1,\alpha_1,...,\alpha_n)$, 
$(X_2,\beta_1,...,\beta_n)$ are equivalent we write 
$(X_1,\alpha_1,...,\alpha_n)\cong(X_2,\beta_1,...,\beta_n)$.

We now recall the following important notion introduced 
in \cite{excep_slopes}: given $\alpha$, a filling instruction on a manifold $X$, we say that $\alpha$ 
\emph{factors through a manifold $Y$} if there exists some filling instruction $\alpha' \subset \alpha$ such that $Y=X(\alpha')$. 

To describe the exceptional fillings on the minimally twisted 5--chain link, we follow the standard choice of notation used to describe graph manifolds set out in \cite{excep_slopes}. Very briefly, if $G$ is an orientable surface with $k$ boundary components and $\Sigma$ is $G$ minus $n$ discs, we can construct homology bases $\{(\mu_{i},\lambda_{i})\}$ on $\partial(\Sigma\times S^{1})$. For coprime pairs $\{(p_{i},q_{i})\}_{i=1}^{n}$ with $|p_{i}|\geq 2$ we get a Seifert manifold $(G,(p_{1},q_{1}),\dots,(p_{n},q_{n}))$ with fixed homology bases on its $k$ boundary components. Given Seifert manifolds $X$ and $Y$ with boundary and orientable base surfaces as above and an element $B\in\mathrm{GL}_{2}(\Z)$, we define $X\cup_{B}Y$ unambiguously to be the quotient manifold $X\cup_{f}Y$, with $f:T\rightarrow T'$ where $T$ and $T'$ are arbitrary boundary tori of $X$ and $Y$, and $f$ acting on homology by $B$ with respect to the fixed bases. Similarly one can define $X\big/_{B}$ when $X$ has at least two boundary components. 

As is common in the litterature, we employ a somehow more
flexible notation for lens spaces than the usual one. We will write
$\mathrm{L} (2,q)$ for the real projective space, $\mathrm{L} (1,q)$
for the 3-sphere, $\mathrm{L} (0,q)$ for $S^{2}\times S^{1}$ and
$\mathrm{L} (p,q)$ for $\mathrm{L}(|p|,q')$ with $q\equiv q'$ modulo
$p$ and $0<q'<|p|$, for any coprime $p,q$. Later in the paper, we will
often be interested in understanding when $\mathrm{L}(x,y)=S^3$ where
$x$ and $y$ shall have some complicated expression; as $\mathrm{L}(x,y)=S^3$ if and only if $|x|=1$, we will often replace $``y"$ with $``\star"$ to simplify matters. 

Finally, throughout the text the symbols $\e$, $\e_1$, $\eta$,
etc. will all denote $\pm1$, and $k$,$n$, etc. will denote integers.

\section{$(S^H,T^H,T^{H})$ triples from 5CL}\label{lens_lens_sec}

In this section, we enumerate all exceptional $(S^H,T^H,T^{H})$ triples obtained by surgery 
on the 5CL. Each one of these triples can be thought of as a knot in
$S^{3}$ with two different lens space surgeries.
%To state the main
%result of this section we need to first define the following
%$\Z$--indexed family  of triplets of exceptional slopes: $A_k:=\left( N\big(-\tfrac{5}{2},\tfrac{1-2k}{5k-2}\big),\infty,
 % -2,-1\right)$.

\begin{theorem}\label{lens_lens_thm}
\begin{itemize}
\item[]
\item If $\left(M_5(\frac ab,\frac cd,\tfrac ef,\tfrac gh),\alpha,\beta,\gamma\right)\in(S^H,T^H,T^H)$ 
then it is equivalent 
to either $\left(N(-\frac{3}{2},-\frac{14}{5}), -2,-1,\infty\right)$ or $A_n:=\left( N\big(-\tfrac{5}{2},\tfrac{1-2n}{5n-2}\big),\infty,-2,-1\right)$ for some $n\in\Z$.
\item The sets of exceptional slopes and corresponding fillings of
  $N\big(-\frac{3}{2},-\frac{14}{5})$ and of $N\big(-\frac{5}{2},\frac{1-2n}{5n-2}\big)$,
for $n\neq0$, are given in Table \ref{not_berge_example}.
\item All $ N\big(-\frac{5}{2},\frac{1-2n}{5n-2}\big)$ are obtained by
  filling the Berge manifold; but none of the $A_n$ is equivalent to $\left(N(-\frac{3}{2},-\frac{14}{5}), -2,-1,\infty\right)$. 
\end{itemize}
\end{theorem}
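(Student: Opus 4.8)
The plan is to treat the three bullets in order, following the reduction scheme announced in the article structure: push any qualifying triple on $M_5$ down first to $M_4$ and then to $M_3=N$, and finally read the answer off the magic-manifold tables of \cite{Magic}. For the classification in the first bullet I would begin with a triple $(M_5(\frac ab,\frac cd,\frac ef,\frac gh),\alpha,\beta,\gamma)\in(S^H,T^H,T^H)$ and feed it through the classification of exceptional slopes on the minimally twisted $5$--chain link from \cite{excep_slopes}. Demanding three exceptional slopes of prescribed types on the last cusp cuts the long list of cases down sharply: the $S^H$ filling is unique by Gordon--Luecke \cite{GL}, and the two $T^H$ conditions confine the four coefficients $\frac ab,\dots,\frac gh$ to a short list of admissible ranges. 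I expect each surviving case to contain a coefficient forcing a reduction (typically a $\pm1$), so that the filling \emph{factors through} $M_4$ and then through $M_3=N$; the Appendix lemmata are designed precisely to automate these reductions, and this step realises the byproduct announced in the abstract, that every such example already arises from the magic manifold.

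Once everything sits on $N$, the remaining work is to solve the system of elementary diophantine equations coming from the exceptional-filling tables of \cite{Magic}: one equation records that a filling is $S^3$ (equivalently $|p|=1$ in the relevant $\mathrm{L}(p,\star)$ entry) while the other two record that the remaining fillings are lens spaces. Solving this system produces exactly the two answers, the sporadic tuple $(N(-\frac32,-\frac{14}5),-2,-1,\infty)$ and the one-parameter family $A_n=(N(-\frac52,\frac{1-2n}{5n-2}),\infty,-2,-1)$. The second bullet then follows by evaluating the magic-manifold filling formulas of \cite{Magic} on these two inputs to produce Table \ref{not_berge_example}; this is careful bookkeeping rather than new mathematics, the one pitfall being that the exceptional slopes must be recorded in the fixed basis on $N$ and not in a basis adapted to the already-filled manifold.

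For the third bullet, the Berge statement is immediate once we recall from the introduction that the Berge manifold equals $N(-\frac52)$: each $N(-\frac52,\frac{1-2n}{5n-2})$ is visibly a further filling of it. The new content is that the sporadic example is \emph{not} equivalent to any $A_n$; since every $A_n$ is a filling of $N(-\frac52)$, it suffices to show $N(-\frac32,-\frac{14}5)$ is not. I would prove this by a direct comparison on $N$: using the known symmetry group of the magic manifold, which permutes its three cusps and acts on slopes by the explicit maps recorded in \cite{Magic}, one checks that no symmetry carries the filling pair $\{-\frac32,-\frac{14}5\}$ to a pair containing $-\frac52$, so that $N(-\frac32,-\frac{14}5)$ lies in no orbit of the form $N(-\frac52,s)$. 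As a cross-check one can compare the ordered pairs of lens spaces listed in Table \ref{not_berge_example} and verify that the pair produced by the sporadic knot matches that of no member of the family $A_n$.

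The main obstacle I anticipate is not a single hard step but the \emph{uniformity} of the reduction in the first bullet: the classification in \cite{excep_slopes} is spread over many tables, and one must show without omission that every $(S^H,T^H,T^H)$ case factors through $N$, which is exactly why the routine parts are delegated to the Appendix lemmata. The subtler conceptual point is the distinctness claim in the third bullet, where one must guard against a numerical coincidence — a single value of $n$ whose two lens-space surgeries happen to reproduce those of the sporadic knot — so the symmetry and lens-space checks must both be carried out carefully rather than appealed to heuristically.
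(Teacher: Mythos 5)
Your reduction scheme for the first two bullets ($M_5\to M_4\to N$, then the diophantine systems read off the tables of \cite{Magic}) is exactly the paper's route, including the use of $\Delta(S^H,T^H)=\Delta(T^H,T^H)=1$ from \cite{CGLS} to pin down the candidate slope triples and the Appendix lemmata to force factoring. One omission there is worth flagging: solving the diophantine systems does \emph{not} "produce exactly the two answers" directly — it produces a list of thirteen cases (three infinite families and ten sporadic tuples), which must then be identified with one another using the nontrivial filling identities of \cite[Theorem~1.5]{Magic} and the cusp symmetries of Lemma~\ref{3CL_sym}. This identification step is real work, and it matters for what follows.

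The genuine gap is in your third bullet. Your primary argument — check that no symmetry of $N$ carries the pair $\{-\tfrac32,-\tfrac{14}5\}$ to a pair containing $-\tfrac52$ — does not prove distinctness: a homeomorphism between the filled one-cusped manifolds need not be induced by a symmetry of $N$, so lying in different $\mathrm{Sym}(N)$-orbits of filling instructions is compatible with the manifolds being homeomorphic. Indeed, the identification step above is itself the counterexample to your criterion: the equivalences $A_n\cong A'_n\cong A''_n$ and the other coincidences come from the identities of \cite[Theorem~1.5]{Magic}, which relate genuinely different filling instructions on $N$ and are not slope permutations. The paper instead compares homeomorphism invariants of the filled manifolds: from Table~\ref{not_berge_example}, $N(-\tfrac52,\tfrac{1-2n}{5n-2})$ has three distinct toroidal fillings while $N(-\tfrac32,-\tfrac{14}5)$ has only two, so no $A_n$ can be equivalent to the sporadic example. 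Your "cross-check" is of exactly this valid type and should be promoted to the proof: the lens space fillings of $N(-\tfrac32,-\tfrac{14}5)$ have fundamental groups of orders $32$ and $31$, while those of $N(-\tfrac52,\tfrac{1-2n}{5n-2})$ have orders $|18-49n|$ and $|49n-19|$; matching these would force $49n\in\{-13,-12,50,51\}$, which has no integer solution, so no numerical coincidence occurs for any $n$. With that substitution (and the identification step made explicit), the argument is complete and coincides with the paper's.
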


\begin{table}
\begin{center}
\begin{tabular}{ |c|c| }
\hline
\multicolumn{2}{ |c| }{$n\in\mathbb{Z}\backslash\{0\}$, 
\qquad $E(N(-\frac52,\frac{1-2n}{5n-2}))=\{-3,-2,-\frac32,-1,0,\infty\}$} \\
\hline\hline
$\beta\in E(N(-\frac52,\frac{1-2n}{5n-2}))$ & $N(-\frac52,\frac{1-2n}{5n-2})(\beta)$ \\ \hline\hline
$\beta=\infty$ & $S^3$ \\ \hline
$\beta=-3$ & $\seifdue D213{-2}\bigu0110 \seifdue D21{3n-1}{5n-2}$ \\ \hline
$\beta=-2$ & $\lens{18-49n}{7-19n}$ \\ \hline
$\beta=-\frac32$ & $\seifdue D2131\bigu110{-1} \seifdue D21{8n-3}{5n-2}$\\ \hline
$\beta=-1$ & $\lens{49n-19}{31n-12}$ \\ \hline
$\beta=0$ & $\seifdue D2{-1}{5n-2}{8n-3}\bigu01{-1}{-1} \seifdue D2131$\\ \hline
\hline
\multicolumn{2}{ |c| }{$E(N(-\frac32,-\frac{14}{5}))=\{-3,-\frac52,-2,-1, 0, \infty\}$} \\
\hline\hline
$\beta\in E(N(-\frac32,-\frac{14}{5}))$ & $N(-\frac32,-\frac{14}{5})(\beta)$ \\ \hline\hline
$\beta=\infty$ & $\lens{32}{-9}$ \\ \hline
$\beta=-3$ & $\seiftre{S^2}21329{-5}$\\ \hline
$\beta=-\frac52$ & $\seifdue D2131\bigu110{-1} \seifdue D214{-5}$\\ \hline
$\beta=-2$ & $S^3$\\ \hline
$\beta=-1$ & $\lens{31}{17}$ \\ \hline
$\beta=0$ & $\seifdue D215{-4}\bigu01{-1}{-1} \seifdue D2131$\\ \hline
\end{tabular}
\end{center}
\caption{The exceptional slopes and corresponding fillings of hyperbolic knot exteriors 
in $S^3$ with two lens space fillings obtained by surgery on 5CL. \label{not_berge_example}}
\end{table}

\begin{remark}
If $n=0$ then $N(-\frac52,\frac{1-2n}{5n-2})$ is the exterior of the $(-2,3,7)$ 
pretzel knot which has 7 exceptional slopes, see \cite[Table~A.4]{Magic} for details. 
\end{remark}

We prove Theorem \ref{lens_lens_thm} by first considering, in Section \ref{lens_lens5CL_subsec}, all $\left(M_{5}(\mathcal F),\alpha,\beta,\gamma\right)\in(S^H,T^H,T^{H})$ with $\mathcal{F}$ not factoring through $M_4$. This set will turn out to be empty and we proceed in Section \ref{lens_lens4CL_subsec} to investigate the $\left(M_{4}(\mathcal F),\alpha,\beta,\gamma\right)\in(S^H,T^H,T^{H})$ with $\mathcal{F}$ not factoring through $M_3$. Again, there will be no such examples and we will finally consider in Section~\ref{lens_lens3CL_subsec} the case $\left(M_{3}(\mathcal F),\alpha,\beta,\gamma\right)\in(S^H,T^H,T^{H})$ . We will produce a complete list of examples, the family $A_{n}$ and the isolated example in the statement of Theorem~\ref{lens_lens_thm}. The fact that the examples we find are all different is an easy consequence of the results in \cite{Magic} and is shown at the end of Section \ref{lens_lens3CL_subsec}. Throughout the argument, easy (but technical) lemmata from the Appendix are referenced. 

\subsection{Hyperbolic knots with two lens surgeries arising from the 5--chain link}\label{lens_lens5CL_subsec}

In this section we prove that if $M_5(\frac ab,\frac cd,\frac ef,\frac
gh)$ is  a hyperbolic knot exterior admitting two different lens space fillings then the instruction $(\frac ab,\frac cd,\frac ef,\frac gh)$ 
factors through $M_4$.
% \begin{figure}[h!]
% \input{5CL.pdf_tex}
% \end{figure}
If $\left(M_5(\frac ab,\frac cd,\frac ef,\frac gh),\alpha, \beta, \gamma\right)\in (S^H,T^H,T^{H})$ and 
$(\frac ab,\frac cd,\frac ef,\frac gh)$ does not factor through $M_4$ then \cite[Theorem~4]{excep_slopes} 
tells that there are two different scenarios to consider: either
there are 3 exceptional slopes, or there are more.
We study separately these two cases, starting with the latter one.  

\subsubsection{Case $e\left(M_5(\mathcal F)\right)>3$}
By application of \cite[Theorem~4]{excep_slopes}, we know that the
manifold $M_5(\frac ab,\frac cd,\frac ef,\frac gh)$ is then equivalent
to some $M_5(\mathcal F)$ listed in \cite[Tables 6\,--
11]{excep_slopes}.
A careful inspection of the corresponding exceptional sets, given in \cite[Tables 14\,--
20]{excep_slopes},
shows that Tables 17 and 18 are the only ones where types $S^H$ and
$T^H$ appear simultaneously. The only possible cases are hence
$\mathcal{F}=(-2,\frac pq,3,\frac uv)$
\cite[Tables~17]{excep_slopes} and 
$\mathcal{F}=(-2,\frac pq,\frac rs,-2)$
\cite[Tables~18]{excep_slopes}. The exceptional slopes are then
$\{-1,0,1,\infty\}$, but since $\Delta(S^H,T^H)=\Delta(T^H,T^H)=1$ \cite{CGLS},
$1$ and $-1$ cannot yield simultaneously cyclic fillings, so $0$
and $\infty$ are necessarily part of the exceptional triple.
\begin{description}
\item[If $\mathcal{F}=(-2,\frac pq,\frac rs,-2)$] $0$ is the only
  possibility for the $S^H$--filling, and then either $\frac pq$ or $\frac
  rs$ is of the form $1+\frac 1n$; but $\infty$ is also a $T^H$--filling, so
  $|s|=|q|=1$, that is $\frac pq,\frac rs\in\Z$. It follows that
  $1+\frac 1n$ is an integer, and the only possibilities are 0 or 2. But according
  Lemma \ref{5CL_factor}, if it is 0, then $M_5(\mathcal F)$ is
  non-hyperbolic, and if it is 2, then it factors through $M_4$.
\item[If $\mathcal{F}= (-2,\frac pq,3,\frac uv)$] On the one hand, $0$
  is a cyclic slope, it follows that either  $\frac uv=3$, $\frac uv=3+\frac 1k$,
  $\frac uv=\frac{6n+7}{2n+3}=3-\frac2{2n+3}$ or $|(3+2n)u-(7+6n)v|=1$
  that is $\frac uv=3-\frac2{2n+3}+\frac{\e}{(2n+3)v}$. Moreover, in
  the last two cases, $\frac pq=1+\frac 1n$ so we can assume that $n\neq
  -1,-2$ otherwise $M_5(\mathcal F)$ would be non-hyperbolic or $\mathcal F$ would factor
  through $M_4$ because of Lemma \ref{5CL_factor}. We obtain hence the lower   bound $\frac uv\geq 3-\frac 2{|2n+3|}-\frac{1}{|2n+3|}\geq 3-\frac
  33=2$.
  On the other hand, $\infty$ is also a cyclic slope, so either
  $\frac uv=\frac 13$, $\frac
  uv=\frac{2k+1}{6k+1}=\frac13+\frac{2}{3(6k+1)}$, $v-3u=\e$ that
  is $\frac uv=\frac13-\frac{\e}{3v}$, or $|(1+2k)v-(1+6k)u|=1$ that
  is $\frac uv=\frac13+\frac{2}{3(6k+1)}+\frac{\e}{(6k+1)v}$. It follows that we have the upper bound $\frac uv\leq\frac13+\frac23+1=2$.
  In conclusion, $\frac uv=2$ and $M_5(\mathcal
  F)$ factors through $M_4$ because of Lemma \ref{5CL_factor}.
\end{description}

\subsubsection{Case $e\left(M_5(\mathcal F)\right)=3$}
By application of \cite[Theorem~4]{excep_slopes}, we know that the
exceptional set of slopes is $\big\{0,1,\infty\big\}$. Moreover, we have
\begin{equation}\label{inf-F}
M_5\big(\tfrac ab, \tfrac cd, \tfrac ef, \tfrac gh\big)(\infty)\mathop{=}\limits_{(\ref{5CLinf})} 
F\big(-\tfrac ab, \tfrac fe, \tfrac dc, -\tfrac gh\big).
\end{equation}
Recall that if one of $a,b,c,d,e,f,g,h=0$ then $M_5(\frac ab,\frac cd,\frac ef,\frac gh)$ 
is non-hyperbolic by Lemma \ref{5CL_factor}. 
The enumeration of closed fillings of $F$ is found in \cite[Table 4]{excep_slopes}. We will 
use (\ref{inf-F}) to translate instructions on $M_5$ to instructions on $F$ and 
carefully consider the entries from \cite[Table 4]{excep_slopes}. In the analysis, 
T4.$n$ will denote the $n^{th}$ line of this table. 

Considering the $T^H\cup S^H$--fillings of $F$ listed in
\cite[Table~4]{excep_slopes}, and in view of Lemma \ref{D4-F}, we learn that, up to a $D_{4}$ permutation of slopes, T4.2--T4.5 is a complete list of necessary and sufficient 
conditions for $M_5(\tfrac ab, \tfrac cd, \tfrac ef, \tfrac gh)(\infty)\in S^H\cup T^H$. The lines T4.2 and T4.3, which correspond to $\tfrac pq=0$, can be ignored since by \eqref{inf-F} and Lemma~\ref{5CL_factor} they yield a non hyperbolic filling.

The entry T4.4 tells us that if 
$M_5(\tfrac ab, \tfrac cd, \tfrac ef, \tfrac gh)(\infty)=F(-\tfrac ab, \tfrac fe, \tfrac dc, -\tfrac gh) 
\in S^H\cup T^H$, then, taking into consideration the action of $D_{4}$ on $F$, one of the following conditions necessarily holds: 
\begin{align*}
(i)\,\, \tfrac ab=\tfrac 1n\,\,\& \,\, & \tfrac ef=k 
& (ii)\,\, \tfrac cd=n\,\,\& \,\,& \tfrac gh=\tfrac1k 
& (iii)\,\, \tfrac ab=\tfrac 1n\,\,\& \,\,& \tfrac gh=\tfrac1k 
& (iv)\,\, \tfrac cd=n\,\,\& \,\,\tfrac ef=k.    
\end{align*}
These conditions can all be identified using Lemma \ref{sym}. 
In fact, to identify for example case ($i$) with case ($iii$) it
suffices to remark that $\left(M_5(\tfrac ab,\tfrac cd,\tfrac ef,\tfrac gh),\infty\right)
\mathop{\cong}\limits_{(\ref{linksymeq1})^3\circ(\ref{symeq1})}
\left(M_5(\tfrac a{a-b},\tfrac {d-c}d,\tfrac hg,\tfrac fe),\infty\right)$.
In a similar way, case ($i$) can 
be identified with case ($ii$) using (\ref{linksymeq1}) and (\ref{linksymeq2}), 
and case ($i$) can be identified with case ($iv$) using (\ref{symeq8}). 

The entry T4.5 tells us that if 
$M_5(\tfrac ab, \tfrac cd, \tfrac ef, \tfrac gh)(\infty)=F(-\tfrac ab, \tfrac fe, \tfrac dc, -\tfrac gh) 
\in S^H\cup T^H$, then one of the following conditions necessarily holds: 
\begin{align*}
(i)\,\, \tfrac ab=\tfrac 1n\,\,\& \,\, &  \tfrac cd=\tfrac{\e+nk}{k}
& (ii)\,\, \tfrac ab=\tfrac k{\e+nk}\,\,\& \,\, & \tfrac cd=n 
& (iii)\,\, \tfrac gh=\tfrac 1n\,\,\& \,\, &  \tfrac ef=\tfrac{\e+nk}{k} 
& (iv)\,\, \tfrac gh=\tfrac k{\e+nk}\,\,\& \,\, & \tfrac ef=n 
\end{align*}
where $\e=\pm1$. Then Case 
($i$) is identified with Case ($iv$) using (\ref{symeq8}). Moreover, Case ($i$) is identified with Case ($iii$), and 
Case ($ii$) is identified with Case ($iv$) using (\ref{linksymeq1})$^4\circ$(\ref{linksymeq2}). 

Therefore, any $M_5(\frac ab,\frac cd,\frac ef,\frac gh)$ with 
$\left(M_5(\frac ab,\frac cd,\frac ef,\frac gh),\alpha, \beta, \gamma\right)\in (S^H,T^H,T^{H})$  
and $\{\alpha,\beta,\gamma\}=\{0,1,\infty\}$ is equivalent to one of:
\[
M_5(\tfrac 1n,\tfrac cd,k,\tfrac gh)\qquad\text{(Family 1)}
\qquad\text{ or }\qquad 
M_5(\tfrac 1n,\tfrac{\e+nk}{k},\tfrac ef,\tfrac gh)\qquad\text{(Family 2)}.
\]

\textbf{We first consider the examples from Family 1}. We know that both 0 and 1 correspond 
to $S^H$ or $T^H$--slopes. Examining the 1--slope we obtain 
\[
M_5(\tfrac 1n, \tfrac cd, k, \tfrac gh)(1)\mathop{=}\limits_{(\ref{5CL1})} 
F(\tfrac{1-n}{n}, \tfrac cd, k,
\tfrac{g-h}{h})\mathop{=}\limits_{\textrm{Lemma }\ref{filling_F}} \seifdue D{1-n}nk1\bigu 0110\seifdue Dcd{g-h}h 
\]
which has an essential torus unless $0,\pm1\in\{1-n,k,c,g-h\}$ by
Lemma \ref{l:down}. Since we are interested in hyperbolic manifolds
and instructions not factoring through $M_{4}$, we can use Lemma
\ref{5CL_factor} to rule out the possibilities $1-n,k\in\{0,\pm1\}$
and $c,g-h=0$. We are then left with the cases $c=\pm1$ and $g=h\pm1$.
\begin{description}
\item[\textbf{Case $c=\pm1$}] Turning now our attention to the slope 0 and writing $\tfrac cd=\tfrac 1m$, it holds 
\[
M_5(\tfrac 1n, \tfrac 1m, k, \tfrac gh)(0)\mathop{=}\limits_{(\ref{5CL0})} 
F(\tfrac n{n-1}, 1-m, -\tfrac hg,k-1), 
\]
which, by Lemmata~\ref{l:down} and \ref{filling_F}, has an essential torus unless we are in the case
$0,\pm1\in\{n,1-m,-h,k-1\}$. This time 
Lemma \ref{5CL_factor} leaves us with the necessary condition $h=\pm 1$, which translates to $\frac gh\in\Z$. 
Combining the two necessary conditions and writing $\frac gh=l$, we learn that
\begin{gather*}
M_5(\tfrac 1n, \tfrac 1m, k, l)(1)\mathop{=}\limits_{(\ref{5CL1})} 
F(\tfrac{1-n}n, \tfrac 1m, k, l-1)\mathop{=}\limits_{(\ref{fillingF})} 
\seifdue D{1-n}nk1\bigu 0110\seifdue D1m{l-1}1\\
\mathop{=}\limits_{(\ref{graph_eq1})} \seiftre{S^2}{1-n}nk1 {1+ml-m}{1-l}
\end{gather*}
which is in $S^H\cup T^H$ only if $\pm1\in\{1+m(l-1),1-n,k\}$ by Lemma \ref{l:down}. Lemma \ref{5CL_factor} 
is used to rule out any of these cases occurring. For
instance, if $1+m(l-1)=-1$, then $m(1-l)=2$ and $m\in\{\pm1,\pm2\}$;
moreover, if $m=-2$, then $1-l=-1$ and $l=2$, meaning that it factors
through $M_4$ by Lemma \ref{5CL_factor}.

\item[\textbf{Case $g=h\pm1$}] As before, turning now our attention to
  the slope 0 and writing $\tfrac gh=1+\frac 1m$, we have
\[
M_5(\tfrac 1n, \tfrac cd, k,\tfrac{m+1}m)(0)
\mathop{=}\limits_{(\ref{5CL0})} 
F(\tfrac n{n-1}, \tfrac {c-d}{c}, -\tfrac m{m+1},k-1), 
\]
which, unless $0,\pm1\in\{n,c-d,m,k-1\}$, will have an essential torus
by Lemmata~\ref{l:down} and \ref{filling_F}. Just as in the
preceding case, we can use Lemma~\ref{5CL_factor} to conclude that the
only possibility is $c-d=\pm 1$, which is equivalent to $\tfrac
cd=1+\tfrac 1l$. Combining the necessary conditions, we obtain
that
\begin{gather*}
M_5(\tfrac 1n,\tfrac {l+1}l, k,\tfrac{m+1}m)(1)
\mathop{=}\limits_{(\ref{5CL1})} 
F(\tfrac{1-n}{n}, \tfrac {l+1}l, k,
\tfrac{1}{m})
\mathop{=}\limits_{(\ref{fillingF})} 
\seifdue D{1-n}nk1\bigu 0110\seifdue D{l+1}l1m\\
\mathop{=}\limits_{(\ref{graph_eq1})} \seiftre{S^2}{1-n}nk1 {l+ml+m}{-l-1}
\end{gather*}
which is in $S^H\cup T^H$ only if $\pm1\in\{l+ml+m,1-n,k\}$ by Lemma \ref{l:down}. Lemma \ref{5CL_factor} 
is then used to rule out any of these cases. For instance, if
$l+ml+m=1$, then $m(1+l)=1-l$; if $l=-1$ then $\frac{l+1}{l}=0$,
otherwise $\frac{2}{1+l}-1=m\in\Z$ so $1+l\in\{\pm1,\pm2\}$, that is
$\frac{1+l}{l}\in\big\{\frac23,\frac12,\infty,2\big\}$; moreover, if
$\frac{1+l}{l}=\frac23$, then $m=-2$ and $\frac{m+1}{m}=\frac12$,
meaning that it factors through $M_4$ by Lemma \ref{5CL_factor}. 
The remaining three cases for $\frac{1+l}{l}$ are directly 
excluded by Lemma \ref{5CL_factor}.
\end{description}

\textbf{We consider now the examples from Family 2}. The analysis follows verbatim the steps considered in the study of Family 1. We have assumed that both 0 and 1 correspond 
to $S^H$ or $T^H$--slopes. The manifold $M_5(\tfrac 1n, \tfrac{\e+kn}{k}, \tfrac ef, \tfrac gh)(1)\mathop{=}\limits_{(\ref{5CL1})} 
F(\tfrac{1-n}n,  \tfrac{\e+kn}{k}, \tfrac ef, \tfrac {g-h}h)$
has an essential torus unless 
$0,\pm1\in\{1-n,\e+kn,e,g-h\}$ by Lemmata \ref{l:down} and \ref{filling_F}. 
Lemma \ref{5CL_factor} implies that $1-n,\e+kn\not\in{0,\pm1}$ and $e,g-h\neq 0$. 
We are thus left with the possibilities $e=\pm1$ and $g-h=\pm1$. 

\begin{description} 
\item[\textbf{Case $e=\pm1$}] Writing $\tfrac ef=\tfrac 1m$ we have 
\[
M_5(\tfrac 1n, \tfrac{\e+kn}{k}, \tfrac 1m, \tfrac gh)(0)\mathop{=}\limits_{(\ref{5CL0})} 
F(\tfrac n{n-1}, \tfrac{\e+(n-1)k}{\e+kn},-\tfrac hg,\tfrac{1-m}m)
\]
which has again an essential torus unless 
$0,\pm1\in\{n, \e+(n-1)k,h,1-m\}$. 
Lemma \ref{5CL_factor} leaves us with the necessary condition 
$h=\pm 1$. Indeed, among the other cases, the worst situation is
$\e+(n-1)k=-\e$, but then $(n-1)k=-2\e$ and $n-1\in\{\pm1,\pm2\}$. The
first three cases can directly be ruled out by Lemma \ref{5CL_factor},
and the assumption $n-1=2$ implies that $k=-\e$ and hence that
$\frac{\e+kn}{k}=2$, meaning that it factors through $M_4$ by Lemma \ref{5CL_factor}.
We can hence set $\tfrac gh=l$. This gives
\begin{gather*}
M_5(\tfrac 1n, \tfrac{\e+kn}{k}, \tfrac 1m, l)(1)\mathop{=}\limits_{(\ref{5CL1})} 
F(\tfrac{1-n}n,  \tfrac{\e+kn}{k}, \tfrac 1m, l-1)\mathop{=}\limits_{(\ref{fillingF})} 
\seifdue D{1-n}n1m\bigu 0110\seifdue D{\e+kn}k{l-1}1\\
\mathop{=}\limits_{(\ref{graph_eq1})}  \seiftre{S^2}{n+m(1-n)}{n-1}{l-1}1{\e+nk}k
\end{gather*}
which is in $S^H\cup T^H$ only when $\pm1\in\{n+m(1-n),l-1,\e+nk\}$ by Lemma \ref{l:down}. These cases are all 
discounted using Lemma \ref{5CL_factor}. 

\item [\textbf{Case $g=h\pm1$}] Turning now our attention to the slope 0 and writing $\tfrac gh=1+\frac 1m$, we get
\[
M_5(\tfrac 1n, \tfrac{\e+kn}{k}, \tfrac ef, \tfrac {m+1}m)(0)\mathop{=}\limits_{(\ref{5CL0})} 
F(\tfrac n{n-1}, \tfrac{\e+(n-1)k}{\e+kn},-\tfrac m{m+1},\tfrac{e-f}f) 
\]
which, unless $0,\pm1\in\{n, \e+(n-1)k,m,e-f\}$, will have an essential torus by Lemmata~\ref{l:down} 
and \ref{filling_F}. Once again we use Lemma~\ref{5CL_factor} to
conclude that the only possibility is
$e-f=\pm 1$, which can be reformulated as $\tfrac ef=\tfrac{l+1}l$. Combining the
necessary conditions, we obtain
\begin{gather*}
M_5(\tfrac 1n, \tfrac{\e+kn}{k}, \tfrac {l+1}l, \tfrac {m+1}m)(1)\mathop{=}\limits_{(\ref{5CL1})} 
F(\tfrac{1-n}n,  \tfrac{\e+kn}{k}, \tfrac {l+1}l, \tfrac 1m)\mathop{=}\limits_{(\ref{fillingF})} 
\seifdue D{1-n}n{l+1}l\bigu 0110\seifdue D{\e+kn}k1m\\
\mathop{=}\limits_{(\ref{graph_eq1})}  \seiftre{S^2}{1-n}n{l+1}l{k+m(\e+kn)}{-kn-\e}
\end{gather*}
which is in $S^H\cup T^H$ only if $\pm1\in\{k+m(\e+kn),1-n,l+1\}$ by Lemma \ref{l:down}. Lemma \ref{5CL_factor} 
is used to directly rule out the $\pm1\in\{1-n, l+1\}$ cases. 
Now, if $k+m(\e+kn)=\eta$, then $m=-\frac 1n+\frac{\eta}{\e+kn}+\frac{\e}{n(\e+kn)}$. We can assume that $n,\e+kn\notin\{0,\pm1\}$ otherwise Lemma \ref{5CL_factor} would apply. It follows that $m\in\big[-\frac32,\frac32\big]$ and hence that $m\in\{0,\pm1\}$ which, again, can be ruled out because of  Lemma \ref{5CL_factor}.
\end{description}

We conclude that if $\left(M_5(\mathcal F),\alpha,\beta,\gamma\right)\in(S^H,T^H,T^H)$ 
then $\mathcal F$ factors through $M_4$. This completes Section~\ref{lens_lens5CL_subsec}. 

\subsection{Hyperbolic knots with two lens surgeries arising from the 4--chain link}\label{lens_lens4CL_subsec}
 
In this section we prove that if $M_4(\frac ab,\frac cd,\frac ef)$
is hyperbolic with three fillings in $S^H\cup T^H$ then the instruction 
$(\frac ab,\frac cd,\frac ef)$ factors through $M_3$.
 % \[
 % \dessin{4cm}{4CLf}
 % \]
From \cite[Theorem~5]{excep_slopes} and a careful inspection of
\cite[Tables 12, 21, 22]{excep_slopes} we deduce that if the triple
$\left(M_4(\frac ab,\frac cd,\frac ef),\alpha, \beta,\gamma\right)$ is
in $(S^H,T^H,T^H)$, then $e\left(M_4(\frac ab,\frac cd,\frac ef)\right)=4$ and $\{\alpha,\beta,\gamma\}\subset\{0,1,2,\infty\}$.
Since $\Delta(S^H,T^H)=\Delta(T^H,T^H)=1$ \cite{CGLS}, it follows that
either  $\{\alpha, \beta,\gamma\}=\{1,2,\infty\}$ or $\{\alpha,
\beta,\gamma\}=\{0,1,\infty\}$.
But one can observe that
\begin{eqnarray*}
M_4(\tfrac ab,\tfrac cd,\tfrac ef,\tfrac gh)
& \mathop{\cong}\limits_{\textrm{Lemma }\ref{lem:M5M4}}&
M_5(\tfrac ab,\tfrac{c-d}d,-1,\tfrac{e-f}f,\tfrac gh)
\
                                                         \mathop{\cong}\limits_{(\ref{symeq9})\circ(\ref{linksymeq1})^2} \
M_5(\tfrac{c-2d}{c-d},\tfrac{b}{b-a},-1,\tfrac{h}{h-g},\tfrac{e-2f}{e-f})\\
&\mathop{\cong}\limits_{\textrm{Lemma }\ref{lem:M5M4}}&
M_4(\tfrac{c-2d}{c-d},\tfrac{2b-a}{b-a},\tfrac{2h-g}{h-g},\tfrac{e-2f}{e-f})
\ \mathop{\cong}\limits_{\textrm{Lemma }\ref{4CL_symmetry}}\ 
M_4(\tfrac{e-2f}{e-f},\tfrac{c-2d}{c-d},\tfrac{2b-a}{b-a},\tfrac{2h-g}{h-g}).
\end{eqnarray*}
It follows that $\left(M_4(\tfrac ab,\tfrac cd,\tfrac ef),0,1,\infty\right)\cong\left(M_4(\tfrac{e-2f}{e-f},\tfrac{c-2d}{c-d},\tfrac{2b-a}{b-a}),2,\infty,1\right)$.
It is hence sufficient to study the case $\{\alpha, \beta,\gamma\}=\{1,2,\infty\}$.
We examine now the necessary conditions on the filling instruction
$(\frac ab,\frac cd, \frac ef)$ imposed from $1$,$2$ and $\infty$ being $S^H\cup T^H$--slopes.

\subsubsection{Necessary conditions from $M_{4}\big(\tfrac ab,\tfrac cd,\tfrac ef\big)(2)$}\label{sec:equations}
We have
\[
M_4(\tfrac ab,\tfrac cd,\tfrac ef)(2)
\mathop{=}\limits_{(\ref{4CL2})}
\seifdue{D}{a-b}b{e-f}f \bigu0110 \seifdue{D}cd2{-1}
\]
which is in $S^H\cup T^H$ only if $0,\pm1\in\{a-b, e-f, c\}$ by Lemma \ref{l:down}.
If $a-b=0$, $e-f=0$, $c=0$ then $\frac{a}{b}=1$, $\frac{e}{f}=1$, $\frac{c}{d}=0$ 
respectively, which are all excluded by Lemma~\ref{4CL_factor} since we are only interested in the hyperbolic case. We continue with a case by case analysis:

\begin{description}

\item[Case $|a-b|=1$] Up to a simultaneous change of signs 
for $a$ and $b$, we may assume that $a-b=1$. This gives us, again by Lemma~\ref{l:down},
\begin{gather*}
M_4\big(\tfrac ab,\tfrac cd,\tfrac ef\big)(2)
\mathop{=}\limits_{(\ref{4CL2})}
\seifdue{D}{1}b{e-f}f \bigu0110 \seifdue{D}cd2{-1}
 \mathop{=}\limits_{(\ref{graph_eq1})}\seiftre{S^2}cd2{-1}{f+b(e-f)}{f-e}
\end{gather*}
which is in $S^H\cup T^H$ only if $\pm1\in\{c, f+b(e-f)\}$. 
Up to changing the signs of $c$ and $d$ or of
$e$ and $f$, we may hence assume that
either $c=1$ or $b(f-e)=1+f$.

\item[Case $|e-f|=1$] Lemma \ref{4CL_symmetry} tells us that 
$M_4(\tfrac ab,\tfrac cd,\tfrac ef)(2)=M_4(\tfrac ef,\tfrac cd,\tfrac ab)(2)$. So, any example 
found in this case is contained in the case $|a-b|=1$;

\item[Case $|c=1|$] Up to a simultaneous change of signs 
for  $c$ and $d$, we may 
assume that $c=1$. We get
\begin{gather*}
M_4(\tfrac ab,\tfrac cd,\tfrac ef)(2)
\mathop{=}\limits_{(\ref{4CL2})}
\seifdue{D}{a-b}b{e-f}f \bigu0110 \seifdue{D}1d2{-1}
\\ \mathop{=}\limits_{(\ref{graph_eq1})}
\seiftre{S^2}{e-f}f{a-b}b{1-2d}2
\end{gather*}
which is in $S^H\cup T^H$ only when $\pm1\in\{a-b, e-f, 1-2d\}$ by Lemma \ref{l:down}.
If
$1-2d=\pm1$, then $d\in\{0,1\}$, that is $\frac{c}{d}\in\{1,\infty\}$, which is excluded by 
Lemma~\ref{4CL_factor}. 
 So either $|a-b|=1$ or $|e-f|$ equals 1, and we are left with one of
the previous cases.
\end{description}

To summarise, if $M_4(\tfrac ab,\tfrac cd,\tfrac ef)(2)\in S^H\cup T^H$, then 
one of the following sets of conditions holds:
\[
\fbox{$
  \begin{array}{ll}
a-b=1&(\textrm{\CDZ})\\[.2cm]
c=1&(\textrm{\CDP})
\end{array}$}\ (\textrm{\CDU})
\ \ \textrm{ or }\ \ 
\fbox{$
  \begin{array}{ll}
a-b=1&(\textrm{\CDZ})\\[.2cm]
b(f-e)=1+f&(\textrm{\CDS})
\end{array}$}\ (\textrm{\CDD}).
\]

\subsubsection{Necessary conditions from $M_4(\tfrac ab,\tfrac cd,\tfrac ef)(1)$}
We have
\[
M_4\big(\tfrac ab,\tfrac cd,\tfrac ef\big)(1)\mathop{=}\limits_{(\ref{4CL1})}
\seiftre{S^2}{a-2b}b{c-d}c{e-2f}f
\]
which is in $S^H\cup T^H$ only if $\pm1\in\{a-2b, c-d, e-2f\}$ by Lemma \ref{l:down}.
So, one of the 
following conditions necessarily holds:
\[
\fbox{$a-2b=\e_1$}\ (\textrm{\CUU})
\ \ \textrm{ or }\ \ \fbox{$c-d=\e_1$}\ (\textrm{\CUD})
\ \ \textrm{ or }\ \ \fbox{$e-2f=\e_1$}\ (\textrm{\CUT}).
\]

\subsubsection{Necessary conditions from  $M_4(\tfrac ab,\tfrac cd,\tfrac ef)(\infty)$}
We have 
\[
M_4(\tfrac ab,\tfrac cd,\tfrac ef)(\infty)
\mathop{=}\limits_{(\ref{4CLinf})}
\big(S^2,(a,b),(d,-c),(e,f)\big)
\]
which is in $S^H\cup T^H$ only when $\pm1\in\{a, d, e\}$ by Lemma \ref{l:down}. So, one of the following conditions necessarily holds:
\[
\fbox{$a=\e_\infty$}\ (\textrm{\CIU})
\ \ \textrm{ or }\ \ \fbox{$d=\e_\infty$}\ (\textrm{\CID})
\ \ \textrm{ or }\ \ \fbox{$e=\e_\infty$}\ (\textrm{\CIT}).
\]

\subsubsection{Enumeration of $M_4(\tfrac ab,\tfrac cd,\tfrac ef)$ satisfying the necessary conditions}\label{sec:enumeration}
We have shown that if the triple
$(M_4(\mathcal{F}),\alpha,\beta,\gamma)$ is in $(S^H,T^H,T^H)$ then $\mathcal{F}$ is equivalent to a filling instruction $(\tfrac ab,\tfrac cd,\tfrac ef)$ satisfying 
one of \CDU\ or \CDD, one of \CUU, \CUD\ or \CUT\, and one of \CIU, \CID\ or \CIT. We will now show 
that any such $(\tfrac ab,\tfrac cd,\tfrac ef)$ must factor through
$M_3$. First, we begin by emphasizing a few incompatibilities between
the above conditions.
\begin{description}
\item[\CDZ$\ +\ $\CUU] substituting $a-b=1$ into $a-2b=\e_1$ gives
  $\frac{a}{b}=1+\frac{1}{1-\e_1}\in\big\{\frac{3}{2},\infty\big\}$, which is excluded by 
Lemma \ref{4CL_factor}.
\item[\CDZ$\ +\ $\CIU] substituting $a=\e_\infty$ into $a-b=1$ gives 
$\frac{a}{b}=\frac{\e_\infty}{\e_\infty-1}=1+\frac{1}{\e_\infty-1}\in\big\{\frac{1}{2},\infty\big\}$
which is excluded by Lemma \ref{4CL_factor}.

\saut

\item[\CDP$\ +\ $\CUD] substituting $c=1$ into $c-d=\varepsilon$ gives $\frac{c}{d}=\frac{1}{1-\e_1}\in\big\{\frac{1}{2},\infty\big\}$, which is excluded by 
Lemma \ref{4CL_factor}.
\item[\CDP$\ +\ $\CID] this gives $\frac{c}{d}=\pm1$, which is excluded by Lemma \ref{4CL_factor}.

\saut

\item[\CDD$\ +\ $\CUT] \CUT\, implies $e-f=f+\e_1$ which we substitute into $b(f-e)=1+f$ 
to get $-b(f+\e_1)=1+f$. If $f=-\e_1$ then $\frac ef=-1$ which is excluded by Lemma \ref{4CL_factor}, 
and otherwise $-b=1+\frac{1-\e_1}{f+\e_1}$.
  If $\e_1=1$, then $b=-1$, $a=0$ and $\frac{a}{b}=0$ which are excluded by Lemma \ref{4CL_factor}.
  If $\e_1=-1$ then $\frac{2}{f-1}=-b-1$ is an integer, so $f-1$ divides
  2 and $f\in\{-1,0,1,2,3\}$. If $f=3$, then $b=-2$, $a=-1$ and
  $\frac{a}{b}=\frac{1}{2}$ which is excluded by Lemma \ref{4CL_factor}.
  Otherwise,
  $\frac{e}{f}=2-\frac{1}{f}\in\big\{1,\frac{3}{2},3,\infty\big\}$ which is excluded by 
  Lemma \ref{4CL_factor};
\item[\CDD$\ +\ $\CIT] if $e=\e_\infty$ then $f\neq\pm1$ by Lemma \ref{4CL_factor}. 
Substituting $e=\e_\infty$ into $b(f-e)=1+f$ gives $b=1+\frac{1+\e_\infty}{f-\e_\infty}$.
  If $\e_\infty=-1$, then $b=1$,  $a=2$ and $\frac{a}{b}=2$ which is excluded by Lemma \ref{4CL_factor}.
  If $\e_\infty=1$ then $\frac{2}{f-1}=b-1$ is an integer, and $f-1$ divides
  2 which implies $f\in\{-1,0,1,2,3\}$. If $f=3$, then $b=2$, $a=3$ and
  $\frac{a}{b}=\frac{3}{2}$ which is excluded by Lemma \ref{4CL_factor}.
  Otherwise, $\frac{e}{f}=\frac{1}{f}\in\big\{-1,\frac{1}{2},1,\infty\big\}$ which is excluded by Lemma 
  \ref{4CL_factor}.
  
\saut

\item[\CUD$\ +\ $\CID] we have 
$c=\e_\infty+\e_1$ and hence $\frac{c}{d}=\frac{\e_\infty+\varepsilon}{\e_\infty}=1+\e_1\e_\infty\in\{0,2\}$, which is excluded by 
Lemma \ref{4CL_factor}.
\item[\CUT$\ +\ $\CIT] we have $2f=\e_\infty-\e_1\Rightarrow f\in\{0,\pm1\}$, so 
$\frac{e}{f}\in\{\pm1,\infty\}$ which is excluded by Lemma \ref{4CL_factor}. 
\end{description}

\saut

We now observe that the above analysis is enough to conclude:
\begin{itemize}
\item \CDZ\ necessarily holds. The above analysis implies that \CUU\ or \CIU\ do not hold.
\item If \CDU\ holds then, because of
  \CDP, neither \CUD\ or \CID\ hold. It follows that both
\CUT\ and \CIT\ hold, but they can't hold simultaneously. So \CDD\ holds. 
\item If \CDD\ holds then neither \CUT\ or \CIT\ can
  hold. It follows that both \CUD\ and \CID\ hold, but they can't hold
  simultaneously.
\end{itemize}

We conclude that, as announced, if $(M_4(\mathcal{F}),\alpha,\beta,\gamma)\in(S^H,T^H,T^H)$ then 
$\mathcal{F}$ factors through $M_3$.

\subsection{Hyperbolic knots with two lens surgeries arising from the 3--chain link}\label{lens_lens3CL_subsec}

We now enumerate all the hyperbolic knots with two lens space surgeries obtained by surgery 
on the 3--chain link. We prove the following result:

\begin{proposition} If $\left(M_3(\frac ab,\frac cd),\alpha,\beta,\gamma\right)\in(S^H,T^H,T^H)$ 
then it is
equivalent to $\left( N\big(-\tfrac{5}{2},\tfrac{1-2k}{5k-2}\big),\infty,-2,-1\right)$, for some $k\in\Z$, or to
$\left(N\big(-\frac{3}{2},-\frac{14}{5}), -2,-1,\infty\right)$.
\end{proposition}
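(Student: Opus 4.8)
The plan is to mimic the strategy already executed in the $M_5$ and $M_4$ reductions, but now with the magic manifold $N = M_3^\ast$ as the terminal object. Since we have reduced to fillings $M_3(\tfrac ab, \tfrac cd)$ supporting a $(S^H, T^H, T^H)$ triple, the goal is to identify the relevant rows of the exceptional-fillings tables for the magic manifold from \cite{Magic}. First I would invoke the classification of exceptional slopes on $N$ (equivalently $M_3$), which gives a finite list of candidate slope-triples on the distinguished boundary component. Since $\Delta(S^H, T^H) = \Delta(T^H, T^H) = 1$ by \cite{CGLS}, the three slopes $\alpha, \beta, \gamma$ must be pairwise at distance $1$ (the $S^H$ slope is at distance $1$ from each $T^H$ slope, and the two $T^H$ slopes are at distance $1$ from each other), which forces $\{\alpha, \beta, \gamma\}$ to be a very restricted configuration — up to the symmetries of $N$, essentially $\{-1, 0, \infty\}$ or an equivalent consecutive-slope triple. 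This pairwise-distance-$1$ constraint is the key combinatorial filter that cuts the list down to a handful of cases.

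Next I would, for each surviving candidate filling instruction on the two non-distinguished boundary components, translate the requirement ``all three fillings land in $S^H \cup T^H$'' into explicit Diophantine conditions, exactly as in the $M_4$ subsection: each of the three slopes $\alpha, \beta, \gamma$ produces a graph-manifold expression via the filling formulas (the analogues of $(\ref{5CL0})$, $(\ref{5CL1})$, $(\ref{4CLinf})$ and the gluing identity $(\ref{graph_eq1})$), and Lemma \ref{l:down} converts ``lens space or $S^3$'' into a small set of integrality/$\pm1$ conditions on the numerators and denominators of $\tfrac ab, \tfrac cd$. I would then solve these conditions simultaneously, using Lemma \ref{5CL_factor} (or its $M_3$-level analogue) to discard the degenerate solutions that make $M_3(\tfrac ab, \tfrac cd)$ non-hyperbolic. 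Unlike the previous two sections, here the system does not become vacuous: the surviving solutions should organise into the one-parameter family $\tfrac cd = \tfrac{1-2k}{5k-2}$ (with $\tfrac ab = -\tfrac52$ after normalising) together with the isolated instruction $(-\tfrac32, -\tfrac{14}5)$.

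Having found the solution set, the final step is to package each solution as a triple $\left(N(\tfrac ab, \tfrac cd), \alpha, \beta, \gamma\right)$ in the claimed normal form, matching it against $\left(N(-\tfrac52, \tfrac{1-2k}{5k-2}), \infty, -2, -1\right)$ and $\left(N(-\tfrac32, -\tfrac{14}5), -2, -1, \infty\right)$. I would read off the explicit fillings at each exceptional slope directly from \cite{Magic} to confirm which slope is the $S^H$ one and which two are the $T^H$ ones, thereby fixing the ordering $(\alpha, \beta, \gamma)$; this is what later feeds Table~\ref{not_berge_example}. The convention $M_3 \cong N$ up to mirror and the symmetry group action on $N$ must be tracked carefully so that distinct-looking solutions are correctly identified or correctly kept separate.

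The main obstacle I anticipate is bookkeeping rather than conceptual difficulty: the magic manifold has a rich symmetry group acting on its slope-triples, and the same genuine example can appear under many disguises among the table entries, so the real work is verifying that after quotienting by these symmetries the solution set collapses to exactly the stated family plus one isolated point — neither over-counting (listing symmetric copies as distinct) nor under-counting (accidentally merging the isolated $(-\tfrac32, -\tfrac{14}5)$ example into the family, which the theorem later insists is genuinely separate since it is \emph{not} obtained from the Berge manifold). Controlling the parametrisation so that the free integer $k$ ranges correctly, and checking that the isolated case is not a degenerate member of the family, will require careful matching of the Seifert invariants produced by the filling formulas against the entries of \cite[Tables]{Magic}.
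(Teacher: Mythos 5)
Your proposal is correct and follows essentially the same route as the paper: restrict to the exceptional-filling tables of the magic manifold, use $\Delta(S^H,T^H)=\Delta(T^H,T^H)=1$ from \cite{CGLS} to force the triple of slopes into the consecutive-pair-plus-$\infty$ configurations inside $\{-3,-2,-1,0,\infty\}$, turn the lens-space/$S^3$ requirements into Diophantine equations, discard non-hyperbolic solutions via Lemma \ref{N_factor}, and collapse the resulting list under the symmetries of $N$ (\cite[Theorem~1.5]{Magic} and Lemma \ref{3CL_sym}) to the family $A_k$ plus the isolated example. The only cosmetic difference is that the paper reads the lens-space conditions directly from \cite[Table~2]{Magic} (organised by a case analysis on which slope is the $S^H$ one) rather than re-deriving them from graph-manifold filling formulas and Lemma \ref{l:down}, which is exactly the information those table entries encode.
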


The enumeration of all $(S^H,T^H,T^H)$ triples obtained by surgery on \tCL\ comes from 
\cite[Theorem~1.3]{Magic} and a careful examination of \cite[Tables~2--3]{Magic}. It should be noted that the classification 
of exceptional fillings on the exterior of the 3-chain link in \cite{Magic} is performed on the 
exterior of the mirror image $\textrm{\tCL}^{\ast}$. The exterior of
$\textrm{\tCL}^{\ast}$ is denoted by $N$, 
and, of course, $M_3(\tfrac ab,\frac cd,\frac ef)=N(-\frac ab,-\frac cd,-\frac ef)$. 
For the sake of clarity when referencing to tables, we will adopt the convention in \cite{Magic}. 

First, we note that \cite[Table~4]{Magic} 
involves no fillings of the form $\lens{\star}{\star}$ so we restrict our attention to  
\cite[Tables~2--3]{Magic}. In Table~3, there are some entries where
$N(\tfrac pq,\tfrac rs,\tfrac tu)=\lens{\star}{\star}$; however, in
each case, slopes $-1$ or 0 are involved so, if $\left(N(\frac ab,\frac
  cd),\alpha,\beta,\gamma\right)$ was a $(S^H,T^H,T^H)$ triple which
had some entries in Table 3, then $\beta$ or $\gamma$ would be the
$-1$ or 0 slope, otherwise $N(\frac ab,\frac cd)$ would not be
hyperbolic, and then there would be no value for $\alpha$ to carry an $S^H$--surgery. 
It follows then from \cite[Theorem~1.3]{Magic} that, up to equivalency, if 
$\left(N(\frac ab,\frac
  cd),\alpha,\beta,\gamma\right)\in(S^H,T^H,T^H)$ then we can assume
that at least one of the slopes is $-3$, $-2$, $-1$, 0 or $\infty$ and
that, because of Lemma \ref{N_factor},
$\alpha,\beta,\gamma\in\{-3,-2,-1,0,\infty\}$, otherwise $M_3(\frac ab,\frac
  cd)$ would not be hyperbolic. In particular the $S^H$--slope $\alpha$ is in $\{-3,-2,-1,0,\infty\}$. We now examine each case individually.

\subsubsection{Case 0 is an $S^H$--slope}

We see directly from \cite[Table~2]{Magic} that if 
$N(\frac rs,\frac tu)(0)=\lens{\star}{\star}$ then $\tfrac rs=n$, $\tfrac tu = -4-n+\tfrac 1m$ and 
$N(\frac rs,\frac tu)(0)=\lens{6m-1}{2m-1}$. So, if $N(\frac rs,\frac tu)(0)=S^3$ then $m=0$ and 
$\frac{t}{u}=\infty$, which is discarded by Lemma \ref{N_factor}. 

\subsubsection{Case $-1$ is an $S^H$--slope}

We see directly from \cite[Table~2]{Magic} that if 
$N(\frac rs,\frac tu)(-1)=\lens{\star}{\star}$ then $\tfrac rs=-3+\tfrac1n$, and 
$N(\frac rs,\frac tu)(-1)=\lens{2n(t+3u)-t-u}{\star}$. If 
$\lens{2n(t+3u)-t-u}{\star}=S^3$ then $2n(t+3u)-t-u=\pm1$. 
By changing the signs of both $t$ and $u$, we may assume w.l.o.g.\ that
\begin{equation}
2n(t+3u)-t-u=1.\label{eq:-1_S3}
\end{equation}
Moreover, we know by \cite{CGLS} that $\Delta(S^H,T^H)=1$ and since
$\Delta(-3,-1)=2$, it follows that
$\beta,\gamma\in\{-2,0,\infty\}$. But, by \cite{CGLS}, we also know that
$\Delta(T^H,T^H)=1$, so the only pairs of possibilities for the
$T^H$--slopes are $\{-2,\infty\}$ and $\{0,\infty\}$.
From \cite[Theorem~1.3]{Magic} we know that $N(\frac rs,\frac tu)(\infty)$ is always a lens space.

We will now use (see \cite[Table 2]{Magic}) to further refine the constraints, $\tfrac rs=-3+\tfrac1n$ and \eqref{eq:-1_S3}, that we have found from imposing $-1$ to be a $S^{3}$--slope. This time we will analyse the restrictions we obtain by considering $0$ and $-2$ to be lens space slopes and through this analysis we will enumerate all $(S^H,T^H,T^H)$ 
triples. We will denote the new parameters with primes.

\begin{description}
\item[Case $0$ is a $T^H$--slope]
Either $\tfrac rs=-3+\frac{1}{n}=n'$ or $\tfrac rs=-3+\frac{1}{n}=-4-n'+\frac{1}{m'}$.

\begin{description}
\item[Case $\tfrac rs=-3+\frac{1}{n}=n'$]
Then $n'=-2$ or $-4$, and $\tfrac tu=-4-n'+\tfrac{1}{m'}$. The case $n'=-2$ is excluded by Lemma \ref{N_factor}. 
The case $n'=-4$ implies that $n=-1$ and that
$\tfrac{t}{u}=\tfrac{1}{m'}$, that is $u=m't$. From (\ref{eq:-1_S3}),
we have then 
$-t(3+7m')=1$ which cannot hold.

\item[Case $\tfrac rs=-3+\frac{1}{n}=-4-n'+\frac{1}{m'}$]
Then $n'+1=\frac{1}{m'}-\frac{1}{n}\in[-2,2]\cup\{\infty\}$ so
$n'\in\{-3,-2,-1,0,1,\infty\}$. But $\frac{t}{u}=n'$ and, by
Lemma~\ref{N_factor}, we know that $n'=1$ otherwise $N(\tfrac
rs,\tfrac tu)$ would not be hyperbolic. It follows that $n=-1$ and
substituting this information in \eqref{eq:-1_S3} we obtain $-10u=1$ which cannot hold.
\end{description}

\item[Case $-2$ is a $T^H$--slope]
Either $\tfrac rs=-3+\frac{1}{n}=-2+\frac{1}{n'}$ or $-2+\frac{1}{n'}=\frac{t}{u}$.
\begin{description}
\item[Case $\tfrac rs=-2+\frac{1}{n'}=-3+\frac{1}{n}$]
Then $n=2$ and by (\ref{eq:-1_S3}), $\left(N(-\frac{5}{2},\frac{t}{u}),-1,-2,\infty\right)$ is a
$(S^H,T^H,T^H)$ triple whenever $3t+11u=1$. Namely, for $t=4-11k$ and
$u=3k-1$ with any $k\in\Z$. That is
\fbox{$\left(N(-\tfrac{5}{2},\tfrac{4-11k}{3k-1}), -1,-2,\infty\right)$}
are $(S^H,T^H,T^H)$ triples for every $k\in\Z$.

\item[Case $-2+\frac{1}{n'}=\frac{t}{u}$]
Then $\frac{t}{u}=\frac{1-2n'}{n'}$ so (\ref{eq:-1_S3}) becomes
$2n(1+n')+n'\in\{0,2\}$.

If $2n(1+n')+n'=0$, then
$2n=\frac{1}{1+n'}-1\in[-2,0]$ so
$(n,n')\in\big\{(-1,-2),(0,0)\big\}$. The first case leads to the
$(S^H,T^H,T^H)$ triple
\fbox{$\left(N(-4,-\frac{5}{2}),-1,-2,\infty\right)$} whereas the
second is discarded by Lemma~\ref{N_factor} since $\frac rs=\frac tu=\infty$. 

If $2n(1+n')+n'=2$, then $\frac{3}{2n+1}=n'+1\in\Z$. It follows that
$n\in\{-2,-1,0,1\}$.
For $n\in\{0,1\}$, we have $\frac{r}{s}=-3+\frac{1}{n}\in\{-2,\infty\}$ so the
associated space is non-hyperbolic by Lemma \ref{N_factor}. For $n=-2$ and 
$-1$ we find that \fbox{$\left(N(-\frac{7}{2},-\frac{5}{2}),-1,-2,\infty\right)$} and
\fbox{$\left(N(-4,-\frac{9}{4}),-1,-2,\infty\right)$} are $(S^H,T^H,T^H)$ triples.
\end{description}
\end{description}

\subsubsection{Case $-2$ is an $S^H$--slope}

We see directly from \cite[Table~2]{Magic} that if 
$N(\frac rs,\frac tu)(-2)=\lens{\star}{\star}$ with $N(\frac rs,\frac tu)$ hyperbolic 
then $\tfrac rs=-2+\tfrac1n$, and $N(\frac rs,\frac tu)(-2)=\lens{(3n(t+2u)-2t-u}{\star}$.
So, up to simultaneously reversing the signs of $t$ and $u$, we may assume
w.l.o.g. that
\begin{equation}
3n(t+2u)-2t-u=1.\label{eq:-2_S3}
\end{equation}

As in the previous section, since $\Delta(S^H,T^H)=\Delta(T^H,T^H)=1$ the 
only possible pairs of $T^H$--slopes are $\{-3,\infty\}$ and $\{-1,\infty\}$.
We know that the $\infty$--filling is always a lens space by
\cite[Theorem~1.3]{Magic}.
We now enumerate the new conditions arising from $-1$ or $-3$ being
$T^H$--slopes. We will denote the new parameters with primes.

\begin{description}
\item[Case $-1$ is a $T^H$--slope]

From \cite[Table~2]{Magic}, either $-2+\frac{1}{n}=-3+\frac{1}{n'}$ or
$\frac{t}{u}=-3+\frac{1}{n'}$.

\begin{description}
\item[Case $-2+\frac{1}{n}=-3+\frac{1}{n'}$]
Then $n=-2$ and we find that 
$\left(N(-\frac{5}{2},\frac{t}{u}),-1,-2,\infty\right)$ is a
$(S^H,T^H,T^H)$ triple whenever $8t+13u+1=0$, that is for $t=13k-5$ and
$u=3-8k$ with any $k\in\Z$. So
\fbox{$\left(N(-\frac{5}{2},\frac{13k-5}{3-8k}),-2,-1,\infty\right)$}
is a $(S^H,T^H,T^H)$ triple for every $k\in\Z$.

\item[Case $-3+\frac{1}{n'}=\frac{t}{u}$]
In this case $\frac{t}{u}=\frac{1-3n'}{n'}$ and (\ref{eq:-2_S3}) becomes
$3n(1-n')+5n'\in\{1,3\}$.

If $3n(1-n')+5n'=1$ then $\frac{4}{5-3n}=1-n'\in\Z$. It follows that
$n\in\{1,2,3\}$. For $n\in\{2,3\}$ we find that 
\fbox{$\left(N(-\frac{3}{2},-\frac{14}{5}),-2,-1,\infty\right)$} and \fbox{$\left(N(-\frac{5}{3},-\frac{5}{2}),-2,-1,\infty\right)$} are
$(S^H,T^H,T^H)$ triples.

If $3n(1-n')+5n'=3$ then $\frac{2}{5-3n}=1-n'\in\Z$. It follows that
$n\in\{1,2\}$.
For $n=1$, we have $\frac{r}{s}=-1$ which makes $N(\tfrac rs,\tfrac tu)$ 
non-hyperbolic by Lemma \ref{N_factor}. 
For $n=2$ we find that 
\fbox{$\left(N(-\frac{3}{2},-\frac{8}{3}),-2,-1,\infty\right)$} is a
$(S^H,T^H,T^H)$ triple.
\end{description}

\item[Case $-3$ is a $T^H$--slope]

If $\frac rs$ or $\frac tu$ is $-2$ then $N(\tfrac rs,\tfrac tu)$ is non-hyperbolic by
Lemma \ref{N_factor}.
So, from \cite[Table~2]{Magic}, $-2+\frac{1}{n}=-1+\frac{1}{n'}$ making 
$n=2$ and $\frac{t}{u}=-1+\frac{1}{m'}=\frac{1-m'}{m'}$.
Using (\ref{eq:-2_S3}), we obtain $m'\in\{-\frac 57,-\frac{3}{7}\}$
which is not an integer.
\end{description}

\subsubsection{Case $-3$ is an $S^H$--slope}

From \cite[Table~2]{Magic}, if $N(\frac rs,\frac tu)(-3)=\lens{\star}{\star}$ then either
$\frac{t}{u}=-2$, which is excluded by Lemma \ref{N_factor}, or 
$\frac{r}{s}=-1+\frac{1}{n}$ and $\frac{t}{u}=-1+\frac{1}{m}$. In 
the latter case we have $N(-1+\tfrac1n,-1+\tfrac1m)(-3)=\lens{(2n+1)(2m+1)-4}{\star}=S^{3}$ 
if and only if $(2n+1)(2m+1)-4=\pm1$; that is $(2n+1)(2m+1)=3$ or $5$. Since both 3
and 5 are primes, it follows
that either $2n+1$ or $2m+1$ is $\pm1$. By symmetry, we may assume
that $2n+1=\pm1$, making $n=-1$ or 0 which are both excluded by Lemma \ref{N_factor}.

\subsubsection{Case $\infty$ is an $S^H$--slope}

From \cite[Theorem~1.3]{Magic}, $N(\tfrac rs,\tfrac tu)(\infty)=\lens{tr-us}{\star}$. 
So, $\infty$ is an $S^H$--slope if and only if
\begin{equation}
tr-us=\pm1.
\label{eq:Infty_S3}
\end{equation}

As before, we have $\Delta(S^H,T^H)=\Delta(T^H,T^H)=1$ so the only possible pairs of $T^H$ 
slopes are $\{-3,-2\}$, $\{-2,-1\}$ or $\{-1,0\}$. Each $T^H$--slope imposes conditions on 
$\tfrac rs$, $\tfrac tu$. We will use primes on the parameters to denote the conditions imposed 
from the smallest $T^H$--slope and double primes on the parameters coming from the conditions 
on the second $T^H$--slope.

\begin{description}
\item[Case $0$ is a $T^H$--slope]

From \cite[Table~2]{Magic} we have $\frac{r}{s}=n'$ and
$\frac{t}{u}=-4-n'+\frac{1}{m'}=\frac{1-m'(n' +4)}{m'}$.
Equation \eqref{eq:Infty_S3} becomes then $\big(1-m'(n'+4)\big)n'=m'\pm1$.
According to Lemma~\ref{lem:QuadEq}, we have 
$n'\in\{-5,-4,-3,-2,-1,0,1\}$.
For $N(\tfrac rs,\tfrac tu)$ to be hyperbolic, $n'$ cannot be in $\{-3,-2,-1,0\}$ 
because of Lemma \ref{N_factor}; we are hence left with cases 
$(n',m')\in\big\{(-5,-1),(-4,-5),(-4,-3),(1,0)\big\}$. If $(n',m')=(-5,-1)$ then 
$\tfrac tu=0$, and if $(n',m')=(1,0)$ then $\tfrac tu =\infty$. 
So, these cases are both excluded by Lemma \ref{N_factor}. The other 
two cases 
\fbox{$\left(N(-4,-\frac{1}{5}),\infty,-1,0,\right)$}
and \fbox{$\left(N(-4,-\frac{1}{3}),\infty,-1,0\right)$}
are indeed $(S^H,T^H,T^H)$ triples. 

\item[Case $-2$ and $-1$ are the $T^H$--slopes]

In this case, either $-2+\frac{1}{n'}=-3+\frac{1}{n''}$ or, up to symmetry, 
$\left(\frac{r}{s},\frac{t}{u}\right)=\left(\frac{1-2n'}{n'},\frac{1-3n''}{n''}\right)$.

\begin{description}
\item[Case $-2+\frac{1}{n'}=-3+\frac{1}{n''}$]
Then $n'=-2$ and up to symmetry, we may assume
that $\frac{r}{s}=-\frac{5}{2}$. Up to a simultaneous change of sign for 
$t$ and $u$, equation (\ref{eq:Infty_S3}) becomes $5t+2u=1$ and this
leads to \fbox{$\left(N(-\frac{5}{2},\frac{1-2k}{5k-2}),\infty,-2,-1)\right)$}
which is indeed a $(S^H,T^H,T^H)$ triple for every $k\in\Z$.
\item[Case $\left(\frac{r}{s},\frac{t}{u}\right)=\left(\frac{1-2n'}{n'},\frac{1-3n''}{n''}\right)$]
Equation \eqref{eq:Infty_S3} becomes $2n'+3n''-5n'n''\in\{0,2\}$.

If $2n'+3n''=5n'n''$, then $n'=\frac{3n''}{5n''-2}\in\Z$. 
If $n''\geq0$ then the condition $5n''-2\leq3n''$ implies that
$n''\leq 1$. If 
$n''\leq0$ then the condition $3n''\leq5n''-2$ implies that $n''\geq1$. It follows that 
either $n''=0$, and then $\tfrac tu=\infty$, or $n''=1$, and then
$\tfrac tu=-2$. Both cases are excluded by Lemma~\ref{N_factor}.

If $2n'+3n''=2+5n'n''$, then $n'=\frac{3n''-2}{5n''-2}\in\Z$. 
If $n''<0$ then $0<n'<1$, and if $n''\geq \tfrac23$ then $0<n'<1$. So 
$n''=0$ and $\frac{t}{u}=\infty$ which is excluded by Lemma \ref{N_factor}.
\end{description}
\item[Case $-3$ is a $T^H$--slope]
From \cite[Table~2]{Magic} we have $\frac{r}{s}=-2$, which is excluded
by Lemma \ref{N_factor}, 
or $\frac{r}{s}=\frac{1-n'}{n'}$ and $\frac{t}{u}=\frac{1-m'}{m'}$.
In the latter case (\ref{eq:Infty_S3}) becomes $n'+m'=0$ or 2.
Using $\Delta(T^H,T^H)=1$, if $-3$ is a $T^H$--slope then $-2$ is the only possible second $T^H$. 
But then, according to \cite[Table~2]{Magic}, we have
$-1+\frac{1}{n'}=-2+\frac{1}{n''}$ and hence$n'=-2$. Subbing this value 
into (\ref{eq:Infty_S3}) with $\frac{t}{u}=\frac{1-m'}{m'}$ we find
that either $m'=2$ or 4. This leads to
\fbox{$\left(N(-\frac{3}{2},-\frac{1}{2}),\infty,-3,-2\right)$} and
\fbox{$\left(N(-\frac{3}{2},-\frac{3}{4}),\infty,-3,-2\right)$} which
are actually $(S^H,T^H,T^H)$ triples.
\end{description}

\subsubsection{Identifying cases}
In the above analysis we have proved that the only $(S^H,T^H,T^H)$ triples of the form 
$\left(N(\tfrac rs,\tfrac tu),\alpha,\beta, \gamma\right)$ are:
\begin{multicols}{2}
  \begin{enumerate}[(i)]
  \item\label{case1}
    $A_n:=
    \left(N(-\frac{5}{2},\frac{1-2n}{5n-2}),\infty,-2,-1\right)$
    for $n\in\Z$;
  \item\label{case2}
    $A'_n:=
    \left(N(-\frac{5}{2},\frac{4-11n}{3n-1}),-1,-2,\infty\right)$
    for $n\in\Z$;
  \item\label{case3}
    $A''_n:=
    \left(N(-\frac{5}{2},\frac{13n-5}{3-8n}),-2,-1,\infty\right)$
    for $n\in\Z$;
 \item\label{case4} $\left(N(-4,-\frac{5}{2}),-1,-2,\infty\right)$;
  \item\label{case5} $\left(N(-\frac{7}{2},-\frac{5}{2}),-1,-2,\infty\right)$;
  \item\label{case6} $\left(N(-4,-\frac{9}{4}),-1,-2,\infty\right)$;
 \item\label{case7} $\left(N(-\frac{3}{2},-\frac{14}{5}),-2,-1,\infty\right)$;
 \item\label{case8} $\left(N(-\frac{5}{3},-\frac{5}{2}),-2,-1,\infty\right)$;
  \item\label{case9} $\left(N(-\frac{3}{2},-\frac{8}{3}),-2,-1,\infty\right)$;
  \item\label{case10} $\left(N(-4,-\frac{1}{5}),\infty,-1,0\right)$;
  \item\label{case11} $\left(N(-4,-\frac{1}{3}),\infty,-1,0\right)$;
  \item\label{case12} $\left(N(-\frac{3}{2},-\frac{1}{2}),\infty,-3,-2\right)$;
  \item\label{case13} $\left(N(-\frac{3}{2},-\frac{3}{4}),\infty,-3,-2\right)$.
  \end{enumerate}
\end{multicols}

In this list there are many repetitions. Indeed, using the first
equality in \cite[Theorem~1.5]{Magic}, we obtain that cases
(\ref{case4}) and (\ref{case9}), cases (\ref{case6}) and
(\ref{case7}), cases (\ref{case10}) and (\ref{case13}), and cases
(\ref{case11}) and (\ref{case12}) are pairwise isomorphic. Using the third
equality in \cite[Theorem~1.5]{Magic}, we obtain that cases (\ref{case7}) and (\ref{case13}) on the one hand, and cases
(\ref{case9}) and (\ref{case12}) on the other hand, are pairwise isomorphic. Moreover, using the second equality in \cite[Theorem~1.5]{Magic}, we 
see that $A''_n\cong A'_n\cong A_n$ for every $n\in\Z$. Finally, it can
be noted that, up to Lemma \ref{3CL_sym}, case (\ref{case4}) is $A'_0$,
case (\ref{case4}) is $A'_1$ and case (\ref{case8}) is $A''_0$. Summing
up, all cases are either isomorphic to case (\ref{case7}) or to $A_n$
for some $n\in\Z$.

\subsubsection{Distinctness of examples}

The Berge manifold is the unique hyperbolic knot exterior in a solid torus $T$ with three 
distinct solid torus fillings \cite{Gabai}. The Berge manifold is equal to $N(-\frac52)$ 
\cite{Magic}. By filling along a $\frac1n$--slope on $\partial T$ we obtain a family 
of hyperbolic knot exteriors with two lens space fillings. As our enumeration of 
$(S^H,T^H,T^H)$ triples obtained by surgery on 5CL is exhaustive, the family of 
$(S^H,T^H,T^H)$ triples obtained by filling along a boundary component of the Berge 
manifold is
$\left\{\Big(N(-\frac52,\frac{1-2n}{5n-2}),\infty,-2,-1\Big)\right\}$.

By considering the sets of exceptional fillings, we will now show that 
$N(-\frac32,-\frac{14}5)\neq N(-\frac52,\frac{1-2n}{5n-2})$ for any $n$.
Using \cite[Tables~2--3]{Magic} we can write down the set of exceptional slopes and fillings 
of $N(-\frac52,\frac{1-2n}{5n-2})$ and $N(-\frac32,-\frac{14}5)$; the result is shown in Table 
\ref{not_berge_example}. We immediately observe that $N(-\frac52,\frac{1-2n}{5n-2})$ has three 
distinct toroidal fillings, and that $N(-\frac32,-\frac{14}5)$ has only two 
toroidal filling. This shows $N(-\frac32,-\frac{14}5)\neq N(-\frac52,\frac{1-2n}{5n-2})$ for any $n\in\Z$.

\section{$(S^H,T^H,T$) triples}\label{lens_toroidal_sec}

In this section, we enumerate all $(S^H,T^H,T)$ triples
obtained by surgery on the 5CL and realizing the maximal distance. We know, from  \cite[Theorem~1]{excep_slopes}, that if $(M_{5}(\mathcal F),\beta,\gamma)\in(T^H,T)$, then $\Delta(\beta,\gamma)\leq3$.
\begin{theorem}\label{lens_toroidal_prop}
\begin{itemize}
\item[]
\item If $\left(M_{5}(\mathcal F),\alpha,\beta,\gamma\right)\in(S^H,T^H,T)$ with $\Delta(\beta,\gamma)=3$, 
then it is equivalent to either the triple $B_n:=\left(N(-1+\tfrac{1}{n},-1-\tfrac{1}{n}), \infty,-3,0\right)$ for some integer
$n\geq 2$, or to $C_n :=\left(N(-1+\tfrac{1}{n},-1-\tfrac{1}{n-2}), \infty,-3,0\right)$
for some integer $n\geq 4$.
\item For $n> 2$,
$E(B_n)=\{-3,-2,-1,0,\infty\}$ and the exceptional fillings are 
given in Table~\ref{te}. For $n=2$, $B_{n}$ is the exterior of the pretzel knot $(-2,3,7)$ and $e(B_{n})=7$.
\item For $n\geq 4$,
  $E(C_n)=\{-3,-2,-1,0,\infty\}$ and the exceptional fillings are
  given in Table~\ref{te}. 
\item None of the $B_n$ is equivalent to a $C_k$.
\end{itemize}
\end{theorem}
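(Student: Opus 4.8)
The plan is to follow the same three-stage descent used for Theorem~\ref{lens_lens_thm}: first show that every $(S^H,T^H,T)$ triple with $\Delta(\beta,\gamma)=3$ arising on $M_5$ factors through $M_4$, then that every such triple on $M_4$ factors through $M_3$, and finally enumerate the triples directly on $M_3=N$. The observation that organises the whole analysis is that, once reduced to the magic manifold and with hyperbolicity imposed, the exceptional slopes on the relevant cusp lie in $\{-3,-2,-1,0,\infty\}$ (by \cite[Theorem~1.3]{Magic} together with Lemma~\ref{N_factor}), and within this set the \emph{only} pair at distance $3$ is $\{-3,0\}$. Since $\Delta(T^H,T)\le 3$ by \cite[Theorem~1]{excep_slopes}, realising the maximal distance forces $\{\beta,\gamma\}=\{-3,0\}$; and since $\Delta(S^H,T^H)=1$ by \cite{CGLS}, the $S^H$--slope $\alpha$ is pinned to a slope at distance $1$ from the $T^H$--slope. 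This leaves only finitely many configurations (which of $-3,0$ is $T^H$ and the resulting position of $\alpha$), all treated uniformly and collapsed by the symmetries of \cite[Theorem~1.5]{Magic}.

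For the two descent steps I would argue exactly as in Sections~\ref{lens_lens5CL_subsec}--\ref{lens_lens4CL_subsec}. On $M_5$, \cite[Theorem~4]{excep_slopes} lists the possible exceptional sets; I would scan \cite[Tables~14--20]{excep_slopes} for the simultaneous presence of types $S^H$, $T^H$ and $T$ with the $T^H$ and $T$ slopes at distance $3$, then, for each surviving family of instructions, translate the relevant fillings into graph manifolds and apply the essential-torus criterion (Lemma~\ref{l:down}) together with Lemma~\ref{5CL_factor} to force the instruction to factor through $M_4$. The passage from $M_4$ to $M_3$ is the same computation run on \cite[Theorem~5]{excep_slopes} and \cite[Tables~12,21,22]{excep_slopes}, with Lemma~\ref{4CL_factor} replacing Lemma~\ref{5CL_factor}. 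Both steps are expected to be routine table-chasing, yielding empty sets of non-factoring examples.

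The substantive step is the enumeration on $N$. Writing the two filled cusps as $N(\tfrac rs,\tfrac tu)$ with the triple carried on the third cusp, I would impose the three conditions simultaneously: $\infty$ is an $S^H$--slope iff $tr-us=\pm1$ (\cite[Theorem~1.3]{Magic}); $-3$ is a $T^H$--slope via the lens-space entries of \cite[Table~2]{Magic}; and $0$ is a $T$--slope via the toroidal entries of \cite[Table~3]{Magic}. Substituting the parametrisations coming from the $T^H$-- and $T$--conditions into $tr-us=\pm1$ reduces everything to a small system of linear Diophantine equations, whose integer solutions I would solve out, discarding non-hyperbolic or redundant instructions with Lemma~\ref{N_factor} and \cite[Theorem~1.5]{Magic}. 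The hard part, and the place where genuine care is needed, is that this system splits into \emph{two} one-parameter branches rather than one: besides the expected family $B_n=\big(N(-1+\tfrac1n,-1-\tfrac1n),\infty,-3,0\big)$ it also produces the second family $C_n=\big(N(-1+\tfrac1n,-1-\tfrac1{n-2}),\infty,-3,0\big)$ — precisely the family absent from \cite[Table~17]{Magic} — so the enumeration must be organised to catch both branches and to record the correct hyperbolicity ranges $n\ge 2$ for $B_n$ and $n\ge 4$ for $C_n$.

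It remains to compute the exceptional data and separate the families. For each of $B_n$ with $n>2$ and $C_n$ with $n\ge4$ I would read off the full exceptional set $E=\{-3,-2,-1,0,\infty\}$ and the homeomorphism type of each filling directly from \cite[Tables~2--3]{Magic}, thereby populating Table~\ref{te}; the degenerate case $n=2$ of $B_n$ is recognised separately as the exterior of the $(-2,3,7)$ pretzel knot, with $e(B_2)=7$, via \cite[Table~A.4]{Magic}. Finally, although $B_n$ and $C_k$ share the same exceptional slope set, I would distinguish them exactly as $N(-\tfrac32,-\tfrac{14}5)$ was distinguished from the Berge family in Section~\ref{lens_lens3CL_subsec}: by comparing the fillings in Table~\ref{te} and exhibiting an invariant of the collection of exceptional fillings — such as the specific lens spaces obtained, or the Seifert data of the toroidal graph-manifold pieces — that takes different values on the two families, giving $B_n\not\cong C_k$ for all admissible $n,k$.
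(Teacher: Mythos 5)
Your overall architecture matches the paper's: descent $M_5\to M_4\to M_3$ by table inspection (which in this theorem is indeed quick — \cite[Tables~14--20]{excep_slopes} show only Table 6 has distance-3 slopes and none carries an $S^H$--slope, and \cite[Tables~21--22]{excep_slopes} never carry $S^H$, $T^H$ and $T$ simultaneously), then enumeration on $N$, then distinctness of $B_n$ and $C_k$ via their unique lens space fillings (the paper compares $B_n(-3)=\lens{4n^2+3}{\star}$ with $C_k(-3)=\lens{4k^2+8k-1}{\star}$ and derives a parity contradiction). However, there is a genuine gap in your enumeration step on $N$. You assert that hyperbolicity plus \cite[Theorem~1.3]{Magic} and Lemma~\ref{N_factor} pin the exceptional set to $\{-3,-2,-1,0,\infty\}$, so that the only distance-3 pair is $\{-3,0\}$. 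That dichotomy is false as stated: by \cite[Corollary~A.6]{Magic} it holds only when $e\big(N(\mathcal F)\big)=5$; when $e\big(N(\mathcal F)\big)>5$ the instruction lies in \cite[Tables~A.2--A.9]{Magic}, where the exceptional sets contain additional slopes ($1$ in Table A.5, $-\tfrac52$ in A.6, $-\tfrac32$ in A.7, $-4$ in A.8), creating further distance-3 pairs such as $\{-\tfrac52,-1\}$, $\{-2,1\}$ and $\{-4,-1\}$.

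This is not a vacuous omission: Table A.6 actually produces the triple $\left(N(-\tfrac32,-\tfrac{14}5),-2,-1,-\tfrac52\right)\in(S^H,T^H,T)$ with $\Delta(T^H,T)=3$, whose toroidal slope is $-\tfrac52\notin\{-3,0\}$ and whose $S^H$--slope is $-2$, not $\infty$; it is equivalent to $C_{-2}$ only after applying the identities of \cite[Theorem~1.5]{Magic}. Likewise Table A.4 yields $\left(N(-4,-\tfrac13),\infty,0,-3\right)$ (the $(-2,3,7)$ pretzel exterior, with $e=7$), which escapes any analysis predicated on $e=5$. Your scheme would never examine these configurations, so exhaustiveness — the heart of the first bullet of the theorem — would be unproven. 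The fix is exactly the paper's Section on instructions with $e\big(M_3(\mathcal F)\big)>5$: a case-by-case sweep of \cite[Tables~A.2--A.9]{Magic} checking every distance-3 pair for a simultaneous $T^H$/$T$ realisation and a compatible $S^H$--slope, which rules out all but the examples above. A secondary correction: within the $e=5$ case both the lens-space and the toroidal conditions are read from \cite[Table~2]{Magic} (the paper reduces Tables 2--4 to Table 2 plus Theorem~1.3), not from Table 3 as you propose; and note that the paper checks $S^H$--slopes $-1$, $-2$ and $\infty$ in each toroidal configuration rather than relying solely on the \cite{CGLS} distance bound to pin $\alpha$.
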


\begin{table}
\begin{center}
\begin{tabular}{ |c|c| }
\hline
\multicolumn{2}{ |c| }{$n>2$, 
\qquad $E(N(-1+\frac1n,-1-\frac1n))=\{-3,-2,-1, 0, \infty\}$} \\
\hline\hline
$\beta\in E(N(-1+\frac1n,-1-\frac1n))$ & $N(-1+\frac1n,-1-\frac1n))(\beta)$ \\ \hline\hline
$\beta=\infty$ & $S^3$ \\ \hline
$\beta=-3$ & $\lens{4n^2+3}{2n^2+n+2}$ \\ \hline
$\beta=-2$ & $\seiftre{S^2}32{1+n}n{1-n}n$ \\ \hline
$\beta=-1$ & $\seiftre{S^2}21{1+2n}{-n}{1-2n}n$ \\ \hline
$\beta=0$ & $\seifdue Dn{1+n}n{n-1}\bigu01{-1}{-1} \seifdue D2131$\\ \hline
%\end{tabular}
%\begin{tabular}{ |c|c| }
\hline
\multicolumn{2}{ |c| }{$n\geq 4$, 
\qquad $E(N(-1+\frac1n,-1-\frac1{n-2}))=\{-3,-2,-1, 0,\infty\}$} \\
\hline\hline
$\beta\in E(N(-1+\frac1n,-1-\frac1{n-2}))$ & $N(-1+\frac1n,-1-\frac1{n-2}))(\beta)$ \\ \hline\hline
$\beta=\infty$ & $S^3$ \\ \hline
$\beta=-3$ & $\lens{4n^2+8n-1}{2n^2-3n}$\\ \hline
$\beta=-2$ & $\seiftre{S^2}{1+n}n{3-n}{n-2}32$\\ \hline
$\beta=-1$ & $\seiftre{S^2}21{1+2n}{-n}{5-2n}{n-2}$ \\ \hline
$\beta=0$ & $\seifdue Dn{1+n}{2-n}{2-n}{3-3n}\bigu01{-1}{-1} \seifdue D2131$\\ \hline
\end{tabular}
\end{center}
\caption{The sets of exceptional slopes and fillings of all knot exteriors obtained by surgery 
on the minimally twisted 5-chain link realising $\Delta(T^H,T)=3$ or $\Delta(T^H,Z)=2$.\label{te}}
\end{table}

In Section \ref{T_5CL}, we show that if $\left(M_5(\mathcal
F),\alpha,\beta,\gamma\right)$ or $\left(M_4(\mathcal
F),\alpha,\beta,\gamma\right)$ is in $(S^H,T^H,T)$ with
$\Delta(\beta,\gamma)=3$, then $\mathcal F$ factors through $M_3$. 
If $M_3(\mathcal F)$ is hyperbolic then, by
\cite[Corollay~A.6]{Magic}, we know that either $e(M_3(\mathcal F))>5$, and then it appears in \cite[Tables~A.2--A.9]{Magic}, or $e(M_3(\mathcal F))=5$; Sections
\ref{3cl_t_enum}--\ref{t_3cl_5excep} investigate the exceptional
triples arising from $M_3$ in these two cases.
Note that \cite{Magic} 
classifies the exceptional filling instructions and fillings on $N$, the exterior of 
the mirror image of 3CL. Of course $N$ and $M_3$ are homeomorphic but, for the instructions, the slopes differ by a sign change; namely, $M_3(\alpha_1,\alpha_2,\alpha_3) = N(-\alpha_1,-\alpha_2,-\alpha_3)$.
For the sake of clarity, as we work with the Tables in \cite{Magic}, we will use the filling instructions on 
$N$.
Finally, Section \ref{t_conclusion} concludes the proof by comparing the
different families thus obtained. 

\subsection{Triples from $M_5$ and $M_4$}\label{T_5CL}
A complete enumeration of $E(M_5(\mathcal F))$, for $\mathcal F$ not
factoring through $M_4$, is given in
\cite[Theorem~4]{excep_slopes}. If $E(M_5(\mathcal
F))=\{0,1,\infty\}$, then all slopes are at distance 1. Moreover, a
careful inspection of \cite[Tables~6\,--11]{excep_slopes} shows that
only Table 6 has exceptional slopes at distance 3, but then
\cite[Table~14]{excep_slopes} shows that none of these examples
contains an $S^H$--slope. Any exceptional triple $(M_5(\mathcal F),\alpha,\beta,\gamma)\in(S^H,T^H,T)$ 
with $\Delta(\beta,\gamma)=3$ has $\mathcal F$ factoring through $M_4$. 

Similarly, \cite[Theorem~5]{excep_slopes} gives a complete enumeration
of $E(M_4(\mathcal F))$ for $\mathcal F$ not factoring through $M_3$.
Again, if $E(M_5(\mathcal
F))=\{0,1,2,\infty\}$ then all exceptional slopes are at distance at most 2,
and \cite[Tables\, 21--22]{excep_slopes} shows that, otherwise, there
is no example containing simultaneously $S^H$, $T^H$ and $T$
slopes. Any triple $(M_4(\mathcal F),\alpha,\beta,\gamma)\in(S^H,T^H,T)$ 
with $\Delta(\beta,\gamma)=3$ must have $\mathcal F$ factoring through $M_3$.

\subsection{Exceptional triples from $M_3(\mathcal F)$ with $e(M_3(\mathcal F))>5$}\label{3cl_t_enum}
We recall that, for the sake of clarity, we use here filling instructions on $N$, and that they actually differ in sign from the filling instructions on $M_3$. 

Any filling instruction $\mathcal F$ on $N$ consisting of two slopes and such that $e(N(\mathcal F))>5$ can be found in \cite[Tables~A.2-A.9]{Magic}. The tables A.2, A.3, A.4 and A.9 each contain a finite list of $N(\mathcal F)$. The remaining tables consist of four infinite families.
% ; Table A.5 considers $N(1,\frac rs)$ with $\frac pq$ exceptional, Table A.6 
% considers $N(-\frac32,\frac rs)$ with $\frac pq$ exceptional, Table A.7 considers $N(-\frac 52,\frac rs)$ 
% with $\frac pq$ exceptional, and finally Table A.8 considers $N(-\frac12,\frac rs)$ with $\frac pq$ exceptional.
We proceed to examine each of these tables in our quest for examples. 

\subsubsection{Examples arising from \cite[Tables A.2--A.4 and A.9]{Magic}}
The only hyperbolic knots, {\it i.e.} $N(\mathcal F)$ with an $S^H$--filling, listed are $N(1,2)$---also known as the Figure-8 knot---in Table A.2 and $N(-4,-\frac13)$---the 
$(-2,3,7)$ pretzel knot---in Table A.4. The former has no lens space filling while the latter gives 
a unique $(S^H,T^H,\Tor)$ triple with $\Delta(T^H,\Tor)=3$. So, from Tables A.2--A.4 and A.9 the only example we get 
is \fbox{$\left(N(-4,-\tfrac{1}{3}), \infty,0,-3\right)\in (S^H,T^H,\Tor).$}

\subsubsection{Examples arising from \cite[Table A.5]{Magic}}

This table enumerates $N(\mathcal F)=N(1,\frac rs)$ cases with exceptional 
$\frac pq\in E(N(\mathcal F))=\{-3,-2,-1,0,1,\infty\}$. By a direct
inspection, we see that a $S^H$--filling can arise in only two ways:
either  $\frac pq=\infty$ and $\frac rs=-5+\frac 1n$ with $n=0$ --- but
then $\frac rs=\infty$ so $\mathcal F$ contains $\infty$, which is
discarded by Lemma \ref{N_factor} --- or $\frac pq=\infty$ with
$r-s=\pm1$. In the latter case, up to a simultaneous change of sign
for $r$ and $s$, we can even assume w.l.o.g. that $r=s+1$. Moreover,
if $N(1,\frac rs)(\alpha)$ is toroidal then $\alpha$ is either $-3$ or $1$. We study both cases separately. 

\begin{description}
\item[If $-3$ is a toroidal slope on $N(1,\frac rs)$] Then $\frac
  pq=0$ is the only case satisfying $N(1,\frac rs)(\frac pq)\in T^H$ 
and $\Delta(\frac pq,-3)=3$. But moreover, $\frac rs$ would be equal
to $-5+\frac{1}{n}$ from \cite[Table A.5]{Magic} which is not compatible with the relation $r=s+1$,
otherwise we would obtain $6=\frac 1n-\frac
1s\in[-2,2]\cup\{\infty\}$. 
\item[If $1$ is a toroidal slope on $N(1,\frac rs)$)] Then $\frac
  pq=-2$ is the only case satisfying $N(1,\frac rs)$ hyperbolic, 
  $N(1,\frac rs)(\frac pq)\in T^H$ 
and $\Delta(\frac pq,1)=3$. 
But moreover, $\frac rs$ would be equal
to $-2+\frac{1}{n}$ which is not compatible with the relation $r=s+1$,
otherwise we would obtain $3=\frac 1n-\frac
1s\in[-2,2]\cup\{\infty\}$. 
\end{description} 

\subsubsection{Examples arising from \cite[Table A.6]{Magic}}

This table enumerates $N(\mathcal F)=N(-\frac32,\frac rs)$ cases with
exceptional $\frac pq\in E(N(\mathcal
F))=\{-3,-\frac52,-2,-1,0,\infty\}$. By a direct inspection, we see
that the possible $S^H$--slopes are $-3$, $-2$, $-1$ and $\infty$. Examining 
each possible case individually we find:
\begin{description}
\item[If $\frac{p}{q}=\infty$ is a $S^H$--slope] Then $\Delta(\frac pq,\alpha)\leq2$ for all $\alpha\in E(N(\mathcal F))$.  
\item[If $\frac{p}{q}=-3$ is a $S^H$--slope] Then
  $\frac{r}{s}=-1+\frac1n$ and $6n+7=\pm1$. The only possibility is
  $n=-1$ but then $\frac
  rs=-2$ and $N(\mathcal F)$ is non-hyperbolic by Lemma~\ref{N_factor}.
\item[If $\frac{p}{q}=-2$ is a $S^H$--slope] Then $4r+11s=\pm1$, and
  up to a simultaneous change of sign for $r$ and $s$, we may even
  assume that $4r+11s=1$, or equivalently that 
$\frac{r}{s}=\frac{1}{4s}-\frac{11}{4}$. The distance 3 pairs of slopes from $E(N(-\frac32,\frac rs))$ 
are $\{-3,0\}$ and $\{-\frac 52,-1\}$. In the first case, $-3$ must be the lens space surgery and $\frac rs$ 
is forced to be $-1+\frac1n$; then $-1+\frac1n=\frac rs=\frac{1}{4s}-\frac{11}{4}$ from where we arrive to the contradiction $\frac74=\frac1{4s}-\frac1n\leq\frac54$.
In the second case, $-1$ must be the lens space surgery and $\frac rs$ is 
forced to be $-3+\frac1n$; then $-3+\frac1n=\frac
rs=\frac{1}{4s}-\frac{11}{4}$, that is $4s-n=ns$. According to Lemma \ref{lem:PlentyEq} we have then $(n,s)\in\big\{(0,0),(3,3),(5,-5),(8,-2),(6,-3),(2,1)\big\}$.
\begin{description}
\item[Case $(n,s)=(0,0)$] Then $\frac{r}{s}=-3+\frac{1}{n}=\infty$ and the space is
  non-hyperbolic by Lemma~\ref{N_factor}.
\item[Case $(n,s)=(3,3)$] Then
  $\frac{r}{s}=-3+\frac{1}{n}=-\frac{8}{3}$ and this is excluded from Table A.6.
\item[Case $(n,s)=(5,-5)$] We obtain then \fbox{$\left(N(-\frac{3}{2},-\frac{14}{5}), -2,-1,-\frac{5}{2}\right)$} which
is indeed a $(S^H,T^H,\Tor)$ triple with $\Delta(T^H,\Tor)=3$.
\item[Case $(n,s)=(8,-2)$] then
  $r=\frac{1}{4}-\frac{11s}{4}=\frac{23}{4}\notin\Z$, which is a contradiction.
\item[Case $(n,s)=(6,-3)$] then
  $r=\frac{34}{4}\notin\Z$, which is a contradiction.
\item[Case $(n,s)=(2,1)$] then $r=-\frac{10}{4}\notin\Z$, which is a contradiction.
    \end{description}
\item[If $\frac{p}{q}=-1$ is a $S^H$--slope] Then
  $\frac{r}{s}=-3+\frac1n$ and $6n+1=\pm1$. The only possibility is
  $n=0$ but then $\frac
  rs=\infty$ and $N(\mathcal F)$ is non-hyperbolic by
  Lemma~\ref{N_factor}. 
\end{description}

\subsubsection{Examples arising from \cite[Table A.7]{Magic}}

This table enumerates $N(\mathcal F)=N(-\frac52,\frac rs)$ cases with exceptional
$\frac pq\in E(N(\mathcal F))=\{-3,-2,-\frac32,-1,0,\infty\}$. By a direct inspection, we see
that the possible $T^H$--slopes are $-2$, $-1$ and $\infty$. 
But none of these slopes are at distance 3 from any other slope in $E(N(\mathcal F))$.

\subsubsection{Examples arising from \cite[Table A.8]{Magic}}

This table enumerates $N(\mathcal F)=N(-\frac12,\frac rs)$ cases with 
exceptional $\frac pq\in E(N(\mathcal F))=\{-4,-3,-2,-1,0,\infty\}$. By a direct inspection, we see
that the possible $S^H$--slopes are $-3$, $-1$ and $\infty$.  
However, if $\frac pq\in\{-3,-1\}$ 
corresponds to an $S^H$--filling, then $\frac rs$ is either $-1+\frac
1n$ or $-3+\frac 1n$ with $n=0$, that is $\frac rs=\infty$, which
makes $N(\mathcal F)$ non-hyperbolic by Lemma~\ref{N_factor}. We can
hence assume that the $S^H$--slope is $\infty$ 
and, up to a simultabeous change of sign for $r$ and $s$, that $r+2s=1$, that
is $\frac{r}{s}=-2+\frac{1}{s}$.
Now, the only pairs of slopes at distance 3 in $E(N(\mathcal F))$ are
$(-4,-1)$ and $(-3,0)$, and Table A.8 
tells us that neither $-4$ or $0$ can correspond to a lens space
filling. On the other hand, if $\frac pq\in\{-3,-1\}$ 
corresponds to an $T^H$--filling, then $\frac rs$ is either $-1+\frac
1n$ or $-3+\frac 1n$. But
  since $\frac rs=-2+\frac1s$, it follows that $\frac rs$ is either
  $-\frac52$ or $-\frac 32$,
  which are both excluded from Table A.8.

\subsection{Exceptional triples arising from $N(\mathcal F)$ with $e(N(\mathcal F))=5$}\label{t_3cl_5excep}

The same arguments presented at the beginning of
Section~\ref{lens_lens3CL_subsec} reduce the study of  the cases
coming from \cite[Theorem~1.3 and Tables~2--4]{Magic} to just Table~2
and Theorem~1.3; namely to the hyperbolic 
$N(\frac rs,\frac tu)$ with $E(N(\frac rs,\frac tu))=\{-3,-2,-1,0,\infty\}$. Such 
$N(\frac rs,\frac tu)(\frac pq)$ can be toroidal only when $\frac{p}{q}=-3$ or 0.

\subsubsection{Case $\frac{p}{q}=-3$ is the $\Tor$--filling} In this case, \cite[Table~2]{Magic} 
gives us the conditions that $\frac rs,\frac tu\neq-1-\frac1n$. We also require the lens space slope 
to be at distance 3 from the toroidal slope so $\frac{p}{q}=0$ should
be the lens space slope and this implies that $\{\frac rs, \frac
tu\}=\{n, -4-n+\frac1m\}$. Up to symmetry, we may hence assume that $\frac{r}{s}=n$ and $\frac{t}{u}=-4-n+\frac{1}{m}$. The possible $S^H$--slopes 
are now $-1$, $-2$ and $\infty$. 
\begin{description}
\item[Case $\frac{p}{q}=-1$ is the $S^H$--slope] Then either
  $\frac rs=-3+\frac{1}{n'}=n$ or $\frac
  tu=-3+\frac{1}{n'}=-4-n+\frac{1}{m}$.\\
  If $\frac rs=-3+\frac{1}{n'}=n$, then $\frac{1}{n'}=3+n\in\Z$ so
    $3+n=\pm1$ and $n\in\{-4,-2\}$.  For $N(\mathcal F)$ to be
  hyperbolic, $n$ is necessarily $-4$ because of Lemma~\ref{N_factor}, so
  $\frac{t}{u}=\frac{1}{m}$. However, from \cite[Table~2]{Magic} we
  get then $N(\mathcal F)(-1)=N(-4,\frac1m)(-1)=L\big(-3-7m,\star\big)\neq
  S^3$.\\
If $\frac tu=-3+\frac{1}{n'}=-4-n+\frac{1}{m}$, then
$1+n=\frac{1}{m}-\frac{1}{n'}\in[-2,2]$ so $n\in
\{-3,-2,-1,0,1\}$. Because of Lemma~\ref{N_factor}, $N(\mathcal F)$ is
hyperbolic only when $n=1$. But then $n'=-1$ and
  $\frac{t}{u}=-4$ so, again from \cite[Table~2]{Magic}, $N(\mathcal F)(-1)=N(1,-4)(-1)= L(-10,\star)\neq S^3$.

\item[Case $\frac{p}{q}=-2$ is the $S^H$--slope] Then either
  $\frac rs=-2+\frac{1}{n'}=n$ or $\frac
  tu=-2+\frac{1}{n'}=-4-n+\frac{1}{m}$.\\
  If $\frac rs=-2+\frac{1}{n'}=n$, then $\frac{1}{n'}=2+n\in\Z$ so $n=-3$
  or $n=-1$. In both cases $N(\mathcal F)$ is non-hyperbolic by Lemma~\ref{N_factor}.\\
If $\frac tu=-2+\frac{1}{n'}=-4-n+\frac{1}{m}$, then $2+n=\frac{1}{m}-\frac{1}{n'}\in[-2,2]$, so
  $n\in\{-4,-3,-2,-1,0\}$. Because of Lemma~\ref{N_factor}, $N(\mathcal F)$ is
hyperbolic only when $n=-4$. But then $n'=1$ and
  $\frac tu=-2+\frac{1}{n'}=-1$, which makes $N(\mathcal F)$ non-hyperbolic by Lemma \ref{N_factor}.
\item[Case $\frac{p}{q}=\infty$ is the $S^H$--slope] Then from \cite[Theorem~1.3]{Magic} we know
 \[
N(\mathcal F)(\infty)=N\left(n,\frac{1-m(n+4)}{m}\right)(\infty)=L\Big(\big(1-m(n+4)\big)n-m,\star\Big).\] 
  For $N(\mathcal F)(\infty)$ to be $S^3$, it is hence required that
  $\big(1-m(n+4)\big)n=m\pm1$. By
  Lemma~\ref{lem:QuadEq}
  it follows then that $n\in\{-5,-4,-3,-2,-1,0,1\}$. 
  For $N(\mathcal F)=N(n,\frac{1-m(n+4)}{m})$ 
  to be hyperbolic, $n$ cannot be in $\{-3,-2,-1,0\}$ because of Lemma~\ref{N_factor}, so we are left
  with the cases $(n,m)\in\big\{(-5,-1),(-4,-5),(-4,-3),(1,0)\big\}$.
  The cases $(n,m)=(-5,-1), (1,0)$ yield again a non-hyperbolic $N(\mathcal F)$, while $(n,m)=(-4,-5), (-4,-3)$ 
  give the $(S^H,T^H, T)$ triples 
  \fbox{$\left(N(-4,-\frac{1}{5}), \infty,0,-3\right)$}
  and \fbox{$\left(N(-4,-\frac{1}{3}), \infty,0,-3\right)$}
  with $\Delta(T^H,\Tor)=3$.
\end{description}

\subsubsection{Case $\frac{p}{q}=0$ is a $\Tor$--filling}

To have a $T^H$--slope at distance 3 from the toroidal slope, we need that
$N(\frac rs,\frac tu)(-3)\in T^H$.
According to \cite[Table~2]{Magic}, it follows that either $\frac{r}{s}=-2$,
but then $N(\mathcal F)$ is non hyperbolic because of Lemma~\ref{N_factor}, or $\frac{r}{s}=-1+\frac{1}{n}$ and $\frac{t}{u}=-1+\frac{1}{m}$.
The $S^H$--slope is then one of $-1$, $-2$ or $\infty$.

\begin{description}
\item[Case $\frac{p}{q}=-2$ is the $S^H$--slope] Then, up to symmetry,
  $\frac rs=-2+\frac{1}{n'}=-1+\frac{1}{n}$ so $n'=2$. Since
  $\frac{t}{u}=\frac{1-m}{m}$, \cite[Table~2]{Magic} tells us that $N(\mathcal F)(-2)=L\big(4+7m,\star\big)\neq S^3$.
\item[Case $\frac{p}{q}=-1$ is the $S^H$--slope] Then, up to symmetry,
  $\frac rs=-3+\frac{1}{n'}=-1+\frac{1}{n}$ so
  $\frac{r}{s}=-2$ which makes $N(\mathcal F)$ non-hyperbolic by Lemma \ref{N_factor}.
\item[Case $\frac{p}{q}=\infty$ is the $S^H$--slope] Then from \cite[Theorem~1.3]{Magic}, we know
  \[
N(\mathcal
F)(\infty)=N\left(\frac{1-n}{n},\frac{1-m}{m}\right)(\infty)=L(1-n-m,\star).
\]
For $N(\mathcal F)(\infty)$ to be $S^3$, it is hence required that
$1-n-m=\pm1$, or equivalently that $n+m\in\{0,2\}$. The first case
leads to \fbox{$B_n:=\left(N(-1+\frac{1}{n},-1-\frac{1}{n}), \infty,-3,0\right)$}
  for $n\in\Z\setminus\{0,\pm1\}$
  and the second to
  \fbox{$C_n:=\left(N(-1+\frac{1}{n},-1-\frac{1}{n-2}), \infty,-3,0\right)$}
  for $n\in\Z\setminus\{0,\pm1,2,3\}$.
 Both families are indeed
$(S^H,T^H,\Tor)$ triples with $\Delta(T^H,\Tor)=3$.
\end{description}

\subsection{Conclusion}\label{t_conclusion}
Along Sections \ref{T_5CL}\,-- \ref{t_3cl_5excep}, we have proved that
the only $(S^H,T^H,T)$ triples obtained by surgery on the 5CL and that realize $\Delta(T^H,T)=3$ are:
  \begin{itemize}
  \item $B_n=
    N\left(-1+\frac{1}{n},-1-\frac{1}{n}\right)(\infty,-3,0)$ for
    $n\in\Z\setminus\{0,\pm1\}$;
  \item $C_n=
    N\left(-1+\frac{1}{n},-1-\frac{1}{n-2}\right)(\infty,-3,0)$ for
    $n\in\Z\setminus\{0,\pm1,2,3\}$;
  \item $N\left(-\frac{3}{2},-\frac{14}{5}\right)\left(-2,-1,-\frac{5}{2}\right)$;
  \item $N\left(-4,-\frac{1}{3}\right)(\infty,0,-3)$;
  \item $N\left(-4,-\frac{1}{5}\right)(\infty,0,-3)$.
  \end{itemize}
The last three isolated cases can be seen to be redundant using \cite[Theorem~1.5]{Magic}.
It follows indeed from the first equality
that $\left(N(-4,-\tfrac13), \infty,0,-3\right)\cong B_{-2}$
and
$\left( N(-4,-\tfrac{1}{5}),\infty,0,-3\right)\cong C_{-2}$, and
from the third equality that
$\left(N\left(-\tfrac{3}{2},-\tfrac{14}{5}\right),-2,-1,-\tfrac{5}{2}\right)\cong
C_{-2}$. 
This completes the proof that every $(S^H,T^H,T)$ triple with $\Delta(T^H,T)=3$ is 
equivalent to some $B_n$ or $C_n$.
But besides, $B_{-n}\cong B_{n}$ and $C_{-n}\cong C_{n+2}$ for all $n$ because of Lemma \ref{3CL_sym}.

We now show that these two families are 
distinct. This is done by comparing their exceptional fillings. 
Using \cite[Theorem~1.3]{Magic} and \cite[Table~2]{Magic} we can
indeed write down the 
exceptional slopes and fillings of both
$N\big(-1+\frac{1}{n},-1-\frac{1}{n}\big)$ and
$N\big(-1+\frac{1}{n},-1-\frac{1}{n-2}\big)$; the result is shown in
Table \ref{te}. We note that both $B_n$ and $C_n$ have a unique lens
space filling, namely $B_n(-3)=\lens{4n^2+3}{2n^2+n+2}$ and $C_{n}(-3)=\lens{4n^2+8n-1}{2n^2-3n}$. 
If $B_n(-3)=C_k(-3)$ for some $n,k\in\Z$, then the order of their
fundamental groups should be equal. 
But it is well known that $\pi_1(\lens pq)$ is the cyclic group of order $p$ 
(see for example \cite[Exercise 9.B.5]{Rolfsen}); so it would follow that 
$3+4n^2=4k^2+8k-1\Leftrightarrow 4(n-k)(k+n)=2(4k-1)$, and this would imply that 
$2\mid 4k-1$, which is a contradiction. Hence, $\{B_n\}\cap\{C_n\}=\emptyset$ and the proof of Theorem~\ref{lens_toroidal_prop} is complete.

\begin{remark}
  $B_{2}$ is the exterior of the $(-2,3,7)$ pretzel knot. In
  this case, $e(B_{2})=7$ and the exceptional slopes and fillings
  can be found in \cite[Table~A.2]{Magic}. In the second family,
  $E(C_{-2})=\{-3,-\frac52, -2,-1,0,\infty\}$; $C_{-2}(\alpha)$ is
  found in Table \ref{te} for
  $\alpha\in E(C_{-2})\backslash \{-\frac52\}$, and
  $C_{-2}(-\frac52)=\seifdue D2131\bigu110{-1} \seifdue D2154$.
\end{remark}

%\textcolor{red}{Does $B_{-n}\cong B_n$ and $C_{-n}\cong C_{n+2}$ ?}

\section{$(S^H,T^H,Z)$ triples}\label{lens_Z_sec}

In this section, we enumerate all $(S^H,T^H,Z)$ triples
obtained by surgery on the 5CL and realizing the maximal distance. It
shall turn out that all such triples are obtain by surgery on the 3CL. 

\begin{theorem}\label{lens_Z_prop}
  \begin{itemize}
  \item[]
  \item If
    $\left(M_5\big(\frac{p}q,\frac{r}s,\frac{u}v,
      \frac{x}y\big),\alpha, \beta, \gamma\right)\in (S^H,T^H,Z)$
    then $\Delta(\beta,\gamma)\leq2$.
  \item If
    $\left(M_5\big(\frac{p}q,\frac{r}s,\frac{u}v,
      \frac{x}y\big),\alpha, \beta, \gamma\right)\in (S^H,T^H,Z)$
    with $\Delta(\beta,\gamma)=2$ then it is equivalent to either 
$B'_n:=\left(N(-1+\tfrac{1}{n},-1-\tfrac{1}{n}), \infty,-3,-1\right)$ for some integer 
$n\geq 2$, or $C'_n :=\left(N(-1+\tfrac{1}{n},-1-\tfrac{1}{n-2}), \infty,-3,-1\right)$
for some integer $n\geq 4$.
  \end{itemize}
\end{theorem}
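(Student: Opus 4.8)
The plan is to follow verbatim the three-step strategy of Sections~\ref{lens_lens_sec} and~\ref{lens_toroidal_sec}: reduce an arbitrary hyperbolic filling of $M_5$ carrying the required slopes to a filling of $M_3$, enumerate the survivors on $M_3=N$ by reading off \cite[Theorem~1.3 and Tables~2, A.2--A.9]{Magic}, and finally identify and deduplicate the resulting families. The structural shortcut is that a $(S^H,T^H,Z)$ triple lives on a hyperbolic filling of $N$ carrying simultaneously an $S^H$- and a $T^H$-slope, which is exactly the configuration already produced in Section~\ref{lens_toroidal_sec}. I therefore expect the underlying manifolds to be precisely the $N(-1+\tfrac1n,-1-\tfrac1n)$ and $N(-1+\tfrac1n,-1-\tfrac1{n-2})$ of Theorem~\ref{lens_toroidal_prop}, now examined at their Seifert slope $-1$, which sits at distance $2$ from the lens slope $-3$, rather than at their toroidal slope $0$.

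The reduction is where I expect the main obstacle and the one genuine departure from the toroidal case. There, distance $3$ exceeded the maximal distance attainable on non-factoring fillings of $M_5$ and $M_4$, so factorisation through $M_3$ was almost automatic. At distance $2$ this shortcut disappears: both $M_5$ (not factoring through $M_4$) and $M_4$ (not factoring through $M_3$) admit exceptional pairs at distance $2$, so I must instead read \cite[Tables~14--22]{excep_slopes} directly and verify that no filling failing to factor through $M_3$ simultaneously carries an $S^H$-slope, a $T^H$-slope, and a $Z$-slope at distance $2$ from the $T^H$-slope. Since $\Delta(S^H,T^H)=1$ by \cite{CGLS}, the $S^H$- and $T^H$-slopes must already coexist, which, exactly as in Section~\ref{lens_lens5CL_subsec}, sharply restricts the candidate instructions and lets Lemmata~\ref{5CL_factor} and~\ref{4CL_factor} force factorisation in each surviving case.

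On $M_3=N$ the enumeration is then short, because the constraints coincide with those already solved in Section~\ref{t_3cl_5excep}. For $e(N(\mathcal F))>5$ the filling is listed in \cite[Tables~A.2--A.9]{Magic}, whose only entries with an $S^H$-filling are $N(1,2)$ (the figure-eight, with no lens-space filling) and $N(-4,-\tfrac13)$ (the $(-2,3,7)$-pretzel); the latter contributes a single distance-$2$ triple, which \cite[Theorem~1.5]{Magic} absorbs into the families below. For $e(N(\mathcal F))=5$ the opening reductions of Section~\ref{lens_lens3CL_subsec} leave the hyperbolic $N(\tfrac rs,\tfrac tu)$ with $E=\{-3,-2,-1,0,\infty\}$, whose $Z$-slopes are $-1$ and $-2$; the $(T^H,Z)$ pairs at distance $2$ are $\{-3,-1\}$ and $\{-2,0\}$, to be treated in parallel just as the two toroidal cases of Section~\ref{t_3cl_5excep}. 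Imposing $N(\tfrac rs,\tfrac tu)(-3)\in T^H$, which by \cite[Table~2]{Magic} forces $\tfrac rs=-1+\tfrac1n$ and $\tfrac tu=-1+\tfrac1m$, together with $N(\tfrac rs,\tfrac tu)(\infty)=S^3$, reproduces verbatim the condition $n+m\in\{0,2\}$, and the second pair is identified with the first via \cite[Theorem~1.5]{Magic}. This yields $B'_n$ for $n+m=0$ and $C'_n$ for $n+m=2$, with the stated ranges $n\geq2$ and $n\geq4$ after the deduplication $B_{-n}\cong B_n$ and $C_{-n}\cong C_{n+2}$ of Lemma~\ref{3CL_sym}.

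Finally, the first bullet $\Delta(\beta,\gamma)\leq2$ follows by rerunning the same search at distance $\geq3$ across all three levels: on $M_3$ the only distance-$3$ pair inside $E=\{-3,-2,-1,0,\infty\}$ is $\{-3,0\}$, and one checks against \cite[Table~2]{Magic} that it never realises type $(T^H,Z)$ on a hyperbolic filling admitting an $S^H$-slope, while \cite[Tables~A.2--A.9]{Magic} and \cite[Tables~14--22]{excep_slopes} yield no distance-$3$ $(T^H,Z)$ triple either. Distinctness of the two families then needs no new work: the $T^H$-filling of $B'_n$ is its $-3$-filling, namely the same lens space $\lens{4n^2+3}{\star}$ as $B_n(-3)$ in Table~\ref{te}, and likewise $C'_n(-3)=C_n(-3)=\lens{4n^2+8n-1}{\star}$, so the comparison of the orders of $\pi_1$ carried out in Section~\ref{t_conclusion} separates $\{B'_n\}$ from $\{C'_n\}$ unchanged.
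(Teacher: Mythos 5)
Your overall architecture---reduce $M_5\to M_4\to M_3$ via \cite{excep_slopes} and the factorisation Lemmata \ref{5CL_factor} and \ref{4CL_factor}, then enumerate on $N$ from the tables of \cite{Magic} and deduplicate via its symmetries---is the same as the paper's (Sections \ref{5cl_z}--\ref{3cl_z_sec}), and your handling of the reduction step, of the distance bound, and of distinctness (inherited from Section \ref{t_conclusion}, since $B'_n$, $C'_n$ have the same underlying manifolds as $B_n$, $C_n$) is sound in spirit. The genuine gap is in the enumeration on $N$, where two of your factual claims are false and they truncate the case analysis. First, for $e(N(\mathcal F))>5$ it is not true that the only entries of \cite[Tables~A.2--A.9]{Magic} carrying an $S^H$-filling are $N(1,2)$ and $N(-4,-\tfrac13)$: that statement holds only for the finite tables A.2--A.4 and A.9. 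The infinite families of Tables A.5--A.8 do contain manifolds with $S^H$-fillings (e.g.\ $N(-\tfrac32,-\tfrac{14}5)$ from Table A.6, which in the toroidal case produced a genuine triple), so each of these four tables must be searched separately for $(S^H,T^H,Z)$ triples at distance $2$; the paper does exactly this in Proposition \ref{lens_Z_prop2}, ruling them out by explicit checks such as $|5-8n|=1$, $|8-3n|=1$, $|3+5n|=1$ for Table A.7.

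Second, and more seriously, for $e(N(\mathcal F))=5$ your claim that the $Z$-slopes are only $-1$ and $-2$ is wrong: by \cite[Table~2]{Magic}, $-3$ is a $Z$-slope whenever (up to Lemma \ref{3CL_sym}) $\tfrac rs=-1+\tfrac1n$, and $0$ is a $Z$-slope whenever one of $\tfrac rs,\tfrac tu$ is an integer. Moreover you tacitly place the $S^H$-slope at $\infty$, which requires justification since $-1$ and $-2$ (and in principle $-3$, $0$) can also be $S^H$-slopes. As a result you analyse essentially one type-assignment, $(\alpha,\beta,\gamma)=(\infty,-3,-1)$, whereas the complete enumeration must run over all assignments of $(S^H,T^H,Z)$ to the slopes, both with $\alpha=\infty$ (namely $(\infty,-3,-1)$, $(\infty,-1,-3)$, $(\infty,-2,0)$, $(\infty,0,-2)$) and with $\alpha\neq\infty$ (namely $(-3,-2,0)$, $(-2,-3,-1)$, $(-2,-1,-3)$, $(-1,-2,0)$, $(-1,0,-2)$, $(0,-1,-3)$). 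These missed configurations are not empty: the paper finds $\left(N(-4,-\tfrac52),-1,-2,0\right)$, $\left(N(-4,-\tfrac94),-1,-2,0\right)$, $\left(N(-\tfrac52,-\tfrac12),\infty,-1,-3\right)$, $\left(N(-4,-\tfrac13),\infty,0,-2\right)$ and $\left(N(-4,-\tfrac15),\infty,0,-2\right)$ precisely there, and only \emph{a posteriori}, via Identities (1.1), (1.3) and (1.4) of \cite[Proposition~1.5]{Magic} together with Lemma \ref{3CL_sym}, do they collapse into $B'_{\pm2}$ and $C'_{\pm2}$. Your blanket appeal to ``the second pair is identified with the first via Theorem 1.5'' cannot substitute for this: the identifications are established case by case after the sporadic triples have been found, and without finding them you cannot conclude that every distance-$2$ triple is equivalent to some $B'_n$ or $C'_n$, i.e.\ exhaustiveness of the list fails as proved.
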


\begin{remark}
Note that the knot exteriors in Theorem~\ref{lens_Z_prop} are the same as the knot exteriors in Theorem~\ref{lens_toroidal_prop}. Therefore, we know that these examples are distinct and that the exceptional 
slopes and fillings are given in Table~\ref{te}.
\end{remark}

The proof shall proceed in three steps. In Section~\ref{5cl_z}, we show that if 
$(M_5(\mathcal F),\alpha,\beta,\gamma)\in(S^H,T^H,Z)$ and $\Delta(\beta,\gamma)\geq2$ then $\mathcal F$ factors through 
$M_4$. Then, in Section~\ref{4cl_z}, we show that if $(M_4(\mathcal F),\alpha,\beta,\gamma)\in(S^H,T^H,Z)$ 
and $\Delta(\beta,\gamma)\geq2$ then $\mathcal F$ factors through $M_3$. Finally, in Section~\ref{3cl_z_sec} we show that if 
$(N(\mathcal F),\alpha,\beta,\gamma)\in(S^H,T^H,Z)$ then
$\Delta(\beta,\gamma)\leq2$ and all 
triples with $\Delta(\beta,\gamma)=2$ are then enumerated.

\subsection{$(S^H,T^H,Z)$ triples from $M_5$}\label{5cl_z}

If $(M_5(\mathcal F),\alpha,\beta,\gamma)\in(S^H,T^H,Z)$ and $\mathcal F$ does not factor through $M_4$ then, from \cite[Theorem~4]{excep_slopes}, 
we have that either $E(M_5(\mathcal F))=\{0,1,\infty\}$, or $\mathcal F$ is equivalent to one of the surgery instructions in
\cite[Tables 14 -- 20]{excep_slopes}. But clearly, if $E(M_5(\mathcal
F))=\{0,1,\infty\}$ then all exceptional slopes are at distance 1. Any
$(S^H,T^H,Z)$ triple realising $\Delta(T^H,Z)\geq2$ should hence be
found in \cite[Tables 14 -- 20]{excep_slopes}. In these tables, the
only simultaneous occurence of $S^H$, $T^H$ and $Z$--slopes are in
Table 17, with $\mathcal F=(-2,\frac pq,3,\frac uv)$, and in Table 18,
with $\mathcal F=(-2,\frac pq,\frac rs,-2)$. In both cases, the
exceptional slopes are $0$, $\pm1$ and $\infty$, so as a consequence
$\Delta(\beta,\gamma)\leq 2$. Moreover, the only possibility for
$(Z,T^H)$--slopes to realise $\Delta(\beta,\gamma)= 2$ is that
$\{\beta,\gamma\}=\{\pm1\}$; the $S^H$--slope is then either 0 or
$\infty$. We proceed now with a case by case analysis.

\begin{description}
\item[Case $\mathcal F=(-2,\frac pq,3,\frac uv)$] By applying $\textrm{(\ref{linksymeq1})}^{-2}\circ\textrm{(\ref{symeq6})}$, we
  may assume that 0 corresponds to the $S^H$--slope. It follows then from
\cite[Table~17]{excep_slopes} that either $\frac pq=1+\frac1n$  and 
$|(3+2n)u-(7+6n)v|=1$, or $\frac uv=3+\frac1k$ and
$|(3+2k)p-(1+2k)q|=1$.
Moreover, from $-1$ being a $T^H$ or $Z$--slope, we also know that
either $|p|=1$ or $|u+v|=1$. These conditions shall be shown to be incompatible. 
\begin{description}
\item[Case $\frac pq=1+\frac1n$  and $|(3+2n)u-(7+6n)v|=1$]$\ $\\
 If $|p|=1$ then, up to reversing both $p$ and $q$, we may assume that
  $p=1$ so that $-1+\frac1{1-q}=n\in\Z$, that is $q\in\{0,2\}$. But
  then $\frac pq\in\{\frac 12,\infty\}$, which is discarded by Lemma
  \ref{5CL_factor}.
  
If $|u+v|=1$ then, up to reversing both $u$ and $v$, we may assume
that $u=1-v$. Subbing this into $|(3+2n)u-(7+6n)v|=1$ 
and solving for $v$ in terms of $n$, we obtain that $v$ is either
$\frac{2+n}{5+4n}$ or $\frac{1+n}{5+4n}$.
But $v\in\Z$, so in the first case, $n$ shall be $-1$ or $-2$, that is
$\frac uv\in\{0,\infty\}$; and in the second case, $n$ shall be $-1$,
that is $\frac uv=\infty$. All these cases are discarded by Lemma
  \ref{5CL_factor}.
\item[Case $\frac uv=3+\frac1k$ and $|(3+2k)p-(1+2k)q|=1$]$\ $\\
If $|u+v|=1$, then $1=|u+v|=|(3k+1)+k|$ meaning that $k=0$ 
and that $\frac uv=\infty$ which is excluded by Lemma \ref{5CL_factor}.

If $|p|=1$ then, up to reversing both $p$ and $q$, we may assume that
$p=1$ and subbing this in $|(3+2k)p-(1+2k)q|=1$ we obtain that $q$ is
either $1+\frac{1}{1+2k}$ or $1+\frac{3}{1+2k}$. But $q\in\Z$, so
$q\in\{0,\pm2,4\}$. If $q\in\{0,2\}$, then $\frac pq\in\{\frac
12,\infty\}$ and this is discarded by Lemma
\ref{5CL_factor}. If $q\in\{-2,4\}$, then $k\in\{-1,0\}$ and $\frac
uv\in\{2,\infty\}$ which is also discarded by Lemma
\ref{5CL_factor}.
\end{description}

\item[Case $\mathcal F=(-2,\frac pq,\frac rs,-2)$] From \cite[Table~18]{excep_slopes} we see that 
for $-1$ to be a type $Z$ or $T^H$--slope we need $|q|=1$ or $|s|=1$.
By applying $\textrm{(\ref{linksymeq1})}^{-1}\circ\textrm{(\ref{linksymeq2})}$, we may assume that $|q|=1$.
But from the same table, we also see that for $1$ to be a type $Z$ or
$T^H$--slope we need $|p|=1$ or $|r|=1$. Since $|q|=1$, the case $|p|=1$
is discarded by Lemma \ref{5CL_factor}. It follows hence that $|r|=1$.
But now, \cite[Table~18]{excep_slopes} also tells that the only 
possible $S^H$--slope is 0, and that it requires either $\frac pq=1+\frac1n$ or 
$\frac rs=1+\frac1n$. But, since $|q|=1$, the first condition implies
that $\frac pq\in\{0,2\}$ and, since $|r|=1$, the second condition
implies that $\frac rs=\frac12$; all these are discarded by Lemma
\ref{5CL_factor}.
\end{description}

\subsection{$(S^H,T^H,Z)$ triples from $M_4$}\label{4cl_z}

If $(M_4(\mathcal F),\alpha,\beta,\gamma)\in(S^H,T^H,Z)$ with
$\mathcal F$ not factoring through $M_3$, then according to
\cite[Theorem~5]{excep_slopes}, either
$E(M_4(\mathcal F))=\{0,1,2,\infty\}$ or $\mathcal F$ is equivalent to a filling instruction listed in \cite[Tables~
21\,-- 22]{excep_slopes}.
But in these tables, $S^H$ and $T^H$--slopes never occur
simultaneously. It follows that $E(M_4(\mathcal F))=\{0,1,2,\infty\}$.
In particular, $\Delta(\beta,\gamma)\leq2$ and if $\Delta(\beta,\gamma)=2$ then $\{\beta,\gamma\}=\{0,2\}$ and $\alpha\in\{1,\infty\}$. 
But one can observe that, on one hand,
\begin{eqnarray*}
M_4(\tfrac ab,\tfrac cd,\tfrac ef,\tfrac gh)
& \mathop{\cong}\limits_{\textrm{Lemma }\ref{lem:M5M4}}&
M_5(\tfrac ab,\tfrac{c-d}d,-1,\tfrac{e-f}f,\tfrac gh)
\
                                                         \mathop{\cong}\limits_{(\ref{symeq3})} \
M_5(\tfrac{g}{g-h}, \tfrac{b-a}b, -1,\tfrac{d}{c-d}, \tfrac{2f-e}f)\\
&\mathop{\cong}\limits_{\textrm{Lemma }\ref{lem:M5M4}}&
M_4(\tfrac g{g-h}, \tfrac{2b-a}{b}, \tfrac{c}{c-d}, \tfrac{2f-e}f)
\ \mathop{\cong}\limits_{\textrm{Lemma }\ref{4CL_symmetry}}\ 
M_4(\tfrac{2b-a}{b}, \tfrac{c}{c-d}, \tfrac{2f-e}f, \tfrac g{g-h}),
\end{eqnarray*}
and that, on the other hand,
\begin{eqnarray*}
M_4(\tfrac ab,\tfrac cd,\tfrac ef,\tfrac gh)
& \mathop{\cong}\limits_{\textrm{Lemma }\ref{lem:M5M4}}&
M_5(\tfrac ab,\tfrac{c-d}d,-1,\tfrac{e-f}f,\tfrac gh)
\ \mathop{\cong}\limits_{(\ref{linksymeq1})^2\circ (\ref{symeq1})} \
M_5(\tfrac{2d-c}{d}, \tfrac{f}{e-f}, -1, \tfrac{h-g}{h}, \tfrac{a}{a-b}) \\
&\mathop{\cong}\limits_{\textrm{Lemma }\ref{lem:M5M4}}&
M_4(\tfrac{2d-c}{d}, \tfrac{e}{e-f}, \tfrac{2h-g}{h}, \tfrac{a}{a-b})
\ \mathop{\cong}\limits_{\textrm{Lemma }\ref{4CL_symmetry}}\ 
M_4(\tfrac{a}{a-b}, \tfrac{2d-c}d, \tfrac e{e-f}, \tfrac{2h-g}{h}).
\end{eqnarray*}
Consequently, it follows then directly that $\left(M_4(\tfrac ab,\tfrac cd,\tfrac
  ef),1,0,2\right)\cong\left(M_4(\tfrac{e-2f}{e-f},\tfrac{c-2d}{c-d},\tfrac{2b-a}{b-a}),\infty,0,2\right)$
and that $\left(M_4\big(\tfrac{p}q,\tfrac{r}s,\tfrac{u}v\big), \infty,
  0,
  2\right)\cong\left(M_4\big(\tfrac{p}{p-q},\tfrac{2s-r}{s},\tfrac{u}{u-v}\big),\infty,
  2, 0\right)$. Up to these equivalencies, we can hence assume that $\infty$ is the $S^H$--slope, $0$
is the $T^H$--slope and $2$ the $Z$--slope. We set the filling instruction on $M_4$ to be $\mathcal F=(\frac
ab,\frac cd, \frac ef)$.

Since $M_4(\mathcal F)(\infty)=S^3$, we know
by (\ref{4CLinf}) and Lemma \ref{l:down}, that one of $|a|$, $|d|$ or $|e|$ is 1.

Since $M_4(\mathcal F)(0)$ is $T^H$, we know
by (\ref{4CL0}) and Lemma \ref{l:down}, that one of $b$, $f$ or $c-2d$ is in
$\{0,\pm1\}$. 
But if one of them is 0, then one of $\frac ab,\frac cd, \frac ef$ is
in $\{2, \infty\}$ and 
$M_4(\tfrac ab, \tfrac cd, \tfrac ef)$ is non-hyperbolic by Lemma
\ref{4CL_factor}. We conclude that one of $|b|$, $|f|$ or $|c-2d|$ is
1, that is either $\frac ab =n$, $\frac ef =n$ or $\frac cd =2+\frac
1k$. Using Lemma \ref{4CL_symmetry}, we may even assume that either $\frac ab =n$ or $\frac cd =2+\frac
1k$.

Since $M_4(\mathcal F)(2)\in Z$, we know
by (\ref{4CL2}) and Lemma \ref{l:down}, that one of $a-b$, $c$ or $e-f$ is in
$\{0,\pm1\}$. 
But if one of them is 0, then one of $\frac ab,\frac cd, \frac ef$ is
in $\{0,1\}$ and 
$M_4(\tfrac ab, \tfrac cd, \tfrac ef)$ is non-hyperbolic by Lemma
\ref{4CL_factor}. We conclude that one of $|a-b|$, $|c|$ or $|e-f|$ is
1, that is either $\frac ab =1+\frac1p$, $\frac ef =1+\frac1p$ or $\frac cd =\frac1p$.

Collecting the necessary conditions for $(\infty,0,2)$ to be a $(S^H,T^H, Z)$ triple found above, we see that 
at least one condition from each column in Table \ref{table_4CL_lensZ}
must be fulfilled.
\begin{table}
\begin{center}
\begin{tabular}{|c|c|c|}
\hline $\infty\leadsto S^H$ & $0\leadsto T^H$ & $2\leadsto Z$ \\ \hline \hline 
$a=\pm1$ & $\frac ab=n$ & $\frac ab=1+\frac1p$ \\ \hline
$d=\pm1$ & $\frac cd=2+\frac1k$& $\frac ef=1+\frac1p$ \\ \hline 
$e=\pm1$ &  & $\frac cd=\frac1p$ \\ \hline
\end{tabular}
\vspace{1mm}
\caption{Necessary conditions for $(\infty,0,2)$ to be a $(S^H,T^H, Z)$ triple. \label{table_4CL_lensZ} }
\end{center}
\end{table}

\begin{description}
\item[Case $\frac cd=2+\frac1k$]
Then $\frac cd=\frac{2k+1}k$ and identity (\ref{4CL0}) implies that
  \begin{gather*}
    M_4(\tfrac ab,\tfrac cd,\tfrac ef)(0)
    \mathop{\cong}\limits_{(\ref{4CL0})}
    \seifdue{D}f{-e}b{2b-a}\bigu0110\seifdue{D}211k\\
    \mathop{\cong}\limits_{(\ref{graph_eq1})}
    \seiftre{S^2}f{-e}b{2b-a}{2k+1}{-2}.
  \end{gather*}
But since $M_4(\mathcal F)(0)$ is a lens space, and
according to Lemma \ref{l:down}, it follows that either $b$, $f$ or
$2k+1$ is $\pm1$.

If $2k+1=\pm1$, then
$k\in\{-1,0\}$ and $\frac cd=2+\frac1k\in\{1,\infty\}$, which is
ruled out by Lemma \ref{4CL_factor}.

If $|b|$ or $|f|$ is 1, then up to Lemma
\ref{4CL_symmetry}, we may even assume that $|b|=1$. But
now, we can claim that $a,d\neq\pm1$, otherwise $\frac ab$ would be
$\pm1$, or $\frac cd=2+\frac 1k$ would be 1 or 3, and those are
discarded by Lemma
\ref{4CL_factor}. Looking at the first column of Table \ref{table_4CL_lensZ}, we
conclude hence that $e=\pm1$.
Looking now at the third column of Table \ref{table_4CL_lensZ}, we see
that either $\frac ab=1+\frac1p$, but then condition $b=\pm1$ implies that
$\frac ab\in\{0,2\}$; or $\frac ef=1+\frac 1p$, but then condition
$e=\pm1$ implies that $\frac ef=\frac12$, or $\frac
cd=\frac1p$, but then condition $\frac cd=2+\frac1k$ implies that $\frac
cd=1$. All those are discarded by Lemma
\ref{4CL_factor}.
\item[Case $\frac ab=n$] 
Then identity (\ref{4CL0}) implies that
  \begin{gather*}
    M_4(\tfrac ab,\tfrac cd,\tfrac ef)(0) \mathop{=}\limits_{(\ref{4CL0})}
    \seifdue{D}f{-e}1{2-n} \bigu0110 \seifdue{D}21{c-2d}d
    \mathop{=}\limits_{(\ref{graph_eq1})}
    \seiftre{S^2}21{c-2d}d{f(2-n)-e}{-f}.
  \end{gather*}
But since $M_4(\mathcal F)(0)$ is a lens space, and
according to Lemma \ref{l:down}, it follows that either $c-2d$ or
$e+f(n-2)$ is $\pm1$.

If $|c-2d|=1$, then $\frac cd=2+\frac1k$ and we
are left to the previous case.

If $e+f(n-2)=\pm1$, that is $\frac ef=2-n+\frac1k$, then $\frac ab\neq1+\frac1p$, otherwise $\frac ab=n$ would be
0 or 2 and this is discarded by Lemma \ref{4CL_factor}. 
But $\frac ef$ is also distinct from $1+\frac1p$. Indeed,
$1+\frac1p=2-n+\frac1k$ would imply that
$(n,k)\in\big\{\big(1,p),(0,2),(2,2)\}$ and then either $\frac ab=n=1$
or $\frac ef\in\{\frac12,\frac32\}$, both are discarded by Lemma
\ref{4CL_factor}.
Looking at the third column of Table \ref{table_4CL_lensZ}, we
conclude hence that $\frac cd=\frac 1p$.
Looking now at the first column of Table \ref{table_4CL_lensZ}, we see
that either $a=\pm1$, but then condition $\frac ab=n$ implies that
$\frac ab=\pm1$; or $d=\pm1$, but then condition
$\frac cd=\frac 1p$ implies that $\frac ef=\pm1$, or $e=\pm1$, but
then condition $\frac ef=2-n+\frac1k$ implies that $\frac
ab=n\in\{1,2,3\}$. All those are discarded by Lemma \ref{4CL_factor}.
\end{description}

This ends the proof that if 
$\left(M_5\big(\mathcal F\big),\alpha, \beta, \gamma\right)\in (S^H,T^H,Z)$ 
and $\Delta(\beta,\gamma)\geq2$ then $\mathcal F$ factors through $M_3$.

\subsection{$(S^H,T^H,Z)$ triples from $M_3$}\label{3cl_z_sec}

\begin{proposition}\label{prop:Dist>2}
If $\left(N\big(\frac rs, \frac tu),\alpha, \beta, \gamma\right)\in (S^H,T^H,Z)$ 
then $\Delta(\beta,\gamma)\leq2$.
\end{proposition}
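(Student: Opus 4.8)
The plan is to run the same two-part reduction used in Sections~\ref{lens_lens3CL_subsec} and~\ref{t_3cl_5excep}. By Lemma~\ref{N_factor} a $(S^H,T^H,Z)$ triple can only sit on a hyperbolic $N(\tfrac rs,\tfrac tu)$, so $\tfrac rs,\tfrac tu$ and all three slopes $\alpha,\beta,\gamma$ avoid the non-hyperbolic values; and by \cite[Corollary~A.6]{Magic} either $e(N(\tfrac rs,\tfrac tu))>5$, in which case $N(\tfrac rs,\tfrac tu)$ is one of the manifolds of \cite[Tables~A.2--A.9]{Magic}, or $e(N(\tfrac rs,\tfrac tu))=5$, in which case (exactly as at the start of Section~\ref{t_3cl_5excep}, discarding Tables~3 and~4) the classification collapses to \cite[Theorem~1.3]{Magic} and \cite[Table~2]{Magic} with $E(N(\tfrac rs,\tfrac tu))=\{-3,-2,-1,0,\infty\}$. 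Throughout I would use that, by \cite[Theorem~1.3]{Magic}, the $\infty$-filling is always a lens space, so $\infty$ is never a $Z$-slope; hence $\gamma\in\{-3,-2,-1,0\}$, and in the $e=5$ case every slope lies in $\{-3,-2,-1,0,\infty\}$.

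For the $e=5$ case I would first note that the pairwise distances in $\{-3,-2,-1,0,\infty\}$ are all at most $3$, with equality only for $\{-3,0\}$, since $\Delta(-3,0)=3$ while every other pair is at distance $1$ or $2$. Thus $\Delta(\beta,\gamma)=3$ would force $\{\beta,\gamma\}=\{-3,0\}$, so that the $Z$-slope $\gamma$ is $-3$ or $0$. I would then read the $\tfrac pq=-3$ and $\tfrac pq=0$ rows of \cite[Table~2]{Magic} and observe that, for hyperbolic $N(\tfrac rs,\tfrac tu)$ with $e=5$, both fillings are always lens spaces or toroidal graph manifolds, never small Seifert spaces---consistently with the enumerations of Sections~\ref{lens_lens3CL_subsec} and~\ref{t_3cl_5excep}, where $-3$ and $0$ occur only as $T^H$- or $T$-slopes. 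Hence $\gamma$ cannot be of type $Z$, and $\Delta(\beta,\gamma)\le2$ in the $e=5$ case. (This is consistent with $\Delta(S^H,T^H)=1$ from \cite{CGLS}, which places the $S^H$-slope $\alpha$ at distance $1$ from $\beta$, forcing $\alpha\in\{-2,\infty\}$ or $\alpha\in\{-1,\infty\}$.)

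The remaining work, which I expect to be the main obstacle, is the case $e(N(\tfrac rs,\tfrac tu))>5$, handled table by table as in Sections~\ref{3cl_t_enum}--\ref{t_3cl_5excep}. For the finite tables \cite[Tables~A.2--A.4 and~A.9]{Magic} the only hyperbolic fillings carrying an $S^H$-slope are the figure-eight knot $N(1,2)$, which has no lens filling and so supports no $(S^H,T^H,Z)$ triple, and the $(-2,3,7)$ pretzel $N(-4,-\tfrac13)$, for which a direct inspection of the exceptional fillings in \cite[Table~A.2]{Magic} gives $\Delta(\beta,\gamma)\le2$. For the four infinite families \cite[Tables~A.5--A.8]{Magic} I would, for each table, list the slopes that can be $S^H$, $T^H$ and $Z$, impose $\Delta(S^H,T^H)=1$, and check that no $T^H$--$Z$ pair has distance $\ge3$, resolving the resulting diophantine restrictions with Lemmata~\ref{lem:QuadEq} and~\ref{lem:PlentyEq} and the hyperbolicity constraint of Lemma~\ref{N_factor}. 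The delicate point is that these tables have six exceptional slopes, so their slope sets (for instance $\{-4,-3,-2,-1,0,\infty\}$ in Table~A.8) do contain pairs at distance $3$ and even $4$, and---unlike the $e=5$ case---a slope such as $-3$ can genuinely be a $Z$-slope (e.g.\ $N(-\tfrac32,-\tfrac{14}5)(-3)$ in Table~A.6 is small Seifert); the whole content of the argument is that whenever a $Z$-filling sits at such a slope, its distance-$\ge3$ partner turns out to be toroidal or non-exceptional rather than a lens space, so that no $(T^H,Z)$ pair ever exceeds distance $2$. Redundancies among the families so produced would finally be removed using \cite[Theorem~1.5]{Magic} and Lemma~\ref{3CL_sym}, completing the bound $\Delta(\beta,\gamma)\le2$.
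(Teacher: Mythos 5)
Your reduction via \cite[Corollary~A.6]{Magic} (splitting into $e=5$ versus \cite[Tables~A.2--A.9]{Magic}), your use of \cite[Theorem~1.3]{Magic} to rule out $\infty$ as a $Z$--slope, and your observation that $\{-3,0\}$ is the only distance-$3$ pair in $\{-3,-2,-1,0,\infty\}$ all match the paper. However, your $e=5$ case rests on a false observation, and this is a genuine gap. It is \emph{not} true that for hyperbolic $N(\tfrac rs,\tfrac tu)$ with five exceptional slopes the fillings at $-3$ and $0$ are never small Seifert. By \cite[Table~2]{Magic}, $N(\tfrac rs,\tfrac tu)(-3)$ is of type $Z$ whenever exactly one of $\tfrac rs,\tfrac tu$ has the form $-1+\tfrac1n$ (a lens space when both do, toroidal otherwise), and $N(\tfrac rs,\tfrac tu)(0)$ is of type $Z$ whenever exactly one of $\tfrac rs,\tfrac tu$ is an integer; both situations occur for plenty of $e=5$ manifolds (e.g.\ $N(-4,\tfrac tu)$ or $N(-\tfrac23,\tfrac tu)$ for generic $\tfrac tu$). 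The evidence you cite from Sections~\ref{lens_lens3CL_subsec} and~\ref{t_3cl_5excep} is circular: those sections enumerate $(S^H,T^H,T^H)$ and $(S^H,T^H,T)$ triples, so $Z$--fillings at $-3$ and $0$ simply had no occasion to be recorded there. Indeed your own parenthetical example contradicts your dichotomy: $N(-\tfrac32,-\tfrac{14}5)(-3)$ is small Seifert precisely because $-\tfrac32=-1+\tfrac1{-2}$ satisfies the Table~2 condition, a phenomenon that has nothing to do with $e>5$.

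Consequently the $e=5$ case cannot be dismissed by inspection; it is exactly where the paper does its real work. One must assume $\{\beta,\gamma\}=\{0,-3\}$ with one slope of type $T^H$ and the other of type $Z$, and intersect the two conditions from \cite[Table~2]{Magic}. If $-3$ is $T^H$ and $0$ is $Z$: then $\tfrac rs=-1+\tfrac1n$ and $\tfrac tu=-1+\tfrac1m$, while one of them must also be an integer, forcing it into $\{-2,0\}$, contradicting hyperbolicity (Lemma~\ref{N_factor}). If $-3$ is $Z$ and $0$ is $T^H$: then, up to Lemma~\ref{3CL_sym}, $\tfrac rs=-1+\tfrac1n$ and $\big\{\tfrac rs,\tfrac tu\big\}=\big\{k,-4-k+\tfrac1m\big\}$; ruling out $\tfrac rs=k$ by Lemma~\ref{N_factor}, one gets $-1+\tfrac1n=-4-k+\tfrac1m$, hence $3+k=\tfrac1m-\tfrac1n\in\{0,\pm1\}$, so $k\in\{-4,-3,-2\}$, and the only surviving case $k=-4$ forces $\tfrac rs=-\tfrac12$, where $N(-4,-\tfrac12)$ is non-hyperbolic by \cite[Table~1]{Magic}. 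This diophantine argument is the content your proposal is missing. Your table-by-table plan for the $e>5$ case is essentially the paper's argument and would go through if executed; note only that the paper handles the pair $\{0,-3\}$ uniformly for \emph{all} hyperbolic $N(\tfrac rs,\tfrac tu)$ via Table~2, so that \cite[Tables~A.5--A.8]{Magic} need only be consulted for pairs involving their extra exceptional slopes.
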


\begin{proof}
If $N\big(\frac rs, \frac tu\big)$ is hyperbolic, then \cite[Corollary~A.6]{Magic} tells us that either $e\big(N(\frac rs, \frac tu)\big)=5$ or $N\big(\frac rs, \frac tu\big)$ is found in  \cite[Tables~A.2\,-- A.9]{Magic}. Moreover, if $e\big(N(\frac rs, \frac tu)\big)=5$ then, it is a consequence of \cite[Theorem~1.3]{Magic} and Lemma~\ref{N_factor} that $E\big(N(\frac rs, \frac tu)\big)=\{\infty,-3,-2,-1,0\}$. Since we just want to dismiss pairs of exceptional slopes at distance greater than two, we only have to consider the case $\{\beta,\gamma\}=\{0,-3\}$.

\begin{description}
\item[Case $\{\beta,\gamma\}=\{0,-3\}$] In
  this case, we can see in \cite[Table~2]{Magic} that if $N(\frac
  rs,\frac tu)$ is hyperbolic with $N(\frac rs,\frac tu)(-3)\in T^H$
  then either $\frac rs=-2$, but this is dismissed by Lemma~\ref{N_factor}, or $\frac rs=-1+\frac 1n$ and $\frac tu=-1+\frac 1m$. But then
  $N(\frac rs,\frac tu)(0)\in Z$ and \cite[Table~2]{Magic} tells us that
  one of $\frac rs=-1+\frac 1n$ or $\frac tu=-1+\frac1m$ is an
  integer, so that one of $\frac rs$ and $\frac tu$ is in $\{-2,0\}$,
  which is forbidden by Lemma
  \ref{N_factor}.

  On the other hand, if $N(\frac rs,\frac tu)(-3)\in Z$, then the same table tells
  us\footnote{A word of
    caution: as one can read in the arXiv
    preprint of this article, the relation between the entries on the
    $\tfrac rs$ column and the $\tfrac tu$ column in \cite[Table~2]{Magic} is more intricate
    than what a reader might appreciate in the published
    version, and the conditions $\frac rs=-1+\frac 1n$ and $\frac rs\neq-2$ are actually shared by lines 4 and 5.}\label{f:1} that, up to  Lemma \ref{3CL_sym}, $\frac rs=-1+\frac1n$. But then $N(\frac rs,\frac tu)(0)\in T^H$ and \cite[Table~2]{Magic} tells us that $\big\{\frac rs,\frac tu\big\}=\big\{k,-4-k+\frac 1m\big\}$. The case $\frac rs=-1+\frac 1n=k$ would imply $\frac rs\in\{-2,0\}$ and is hence dismissed by Lemma~\ref{N_factor}; so we can assume that $\frac
  rs=-1+\frac 1n=-4-k+\frac1m$ and $\frac tu=k$.  As $n=\pm1$ would make
  $N(\frac rs,\frac tu)$ non-hyperbolic because of
  Lemma~\ref{N_factor}, we have $3+k=\frac1m-\frac1n\in\{0,\pm1\}$, that is $k\in\{-4,-3,-2\}$. But since $\frac tu=k$, cases $k=-3$ and $k=-2$ are dismissed by Lemma~\ref{N_factor}. If $k=-4$ then $\frac tu=-4$ and $\frac rs=-1+\frac
  1n=\frac1m$; it follows that $\frac rs=-\frac12$, but $N(\frac
  rs,\tfrac tu)=N(-4,-\frac12)$ is non-hyperbolic, see
  \cite[Table~1]{Magic}.

  \item[Case $N\big(\frac rs, \frac tu\big)$ is found in {\cite[Tables A.2\, -- A.9]{Magic}} and $\{\beta,\gamma\}\neq\{0,-3\}$] It is
  immediately clear that the only $(S^H,T^H,Z)$ triple in Tables A.2--A.4 and Table A.9 is the triple obtained from the $(-2,3,7)$ pretzel
  knot, and in this case $\Delta(\beta,\gamma)=2$.

  If $\left(N(\frac rs,\tfrac
    tu),\alpha,\beta,\gamma\right)\in(S^H,T^H,Z)$ with
  $\Delta(\beta,\gamma)>2$ is found in \cite[Table~A.5]{Magic} then
  $E(N(\frac rs,\tfrac tu))=\{-3,-2,-1,0,1,\infty\}$ so that $1\in\{\beta,\gamma\}$. However, this table tells us that $N(\frac rs,\tfrac
  tu)(1)$ is never in $T^H\cup Z$.

  If $\left(N(\frac rs,\tfrac
    tu),\alpha,\beta,\gamma\right)\in(S^H,T^H,Z)$ with
  $\Delta(\beta,\gamma)>2$ is found in \cite[Table~A.6]{Magic} then
  $E(N(\frac rs,\tfrac tu))=\{-3,-\frac 52,-2,-1,0,\infty\}$ and $-\frac52\in\{\beta,\gamma\}$. This table also tells us that $N(\frac rs,\tfrac
  tu)(-\frac52)\notin T^H$ and that $N(\frac rs,\tfrac
  tu)(-\frac52)\in Z$ only when $\frac rs=-2+\frac1n$. Moreover, if
  $\Delta(\beta,-\frac52)>2$ then $\beta\in\{-1,0\}$; but 0 is not a $T^{H}$--slope and if $-1$ is,
  then $\frac
  rs=-2+\frac1n=-3+\frac1k$, that is $\frac rs=-\frac52$, which is
  actually excluded from this table.

  If $\left(N(\frac rs,\tfrac
    tu),\alpha,\beta,\gamma\right)\in(S^H,T^H,Z)$ with
  $\Delta(\beta,\gamma)>2$ is found in \cite[Table~A.7]{Magic} then
  $E(N(\frac rs,\tfrac tu))=\{-3,-2,-\frac32,-1,0,\infty\}$ and $-\frac32\in\{\beta,\gamma\}$. Moreover this table tells us $N(\frac rs,\tfrac
  tu)(-\frac32)\not\in T^H$ so that $\gamma=-\frac 32$; but if
  $\Delta(\beta,-\frac32)>2$ then $\beta\in\{-3,0\}$ and neither of them is a $T^H$--slope.

  Finally, if $\left(N(\frac rs,\tfrac
    tu),\alpha,\beta,\gamma\right)\in(S^H,T^H,Z)$ with
  $\Delta(\beta,\gamma)>2$ is found in \cite[Table~A.8]{Magic} then
  $E(N(\frac rs,\tfrac tu))=\{-4,-3,-2,-1,0,\infty\}$ and $-4\in\{\beta,\gamma\}$.
This table also tells us that $N(\frac rs,\tfrac
  tu)(-4)\not\in T^H$ and that $N(\frac rs,\tfrac tu)(-4)\in Z$ only
  when $\frac rs\in \mathbb{Z}$. Moreover, if $\Delta(\beta,-4)>2$ then
  $\beta\in\{-1,0\}$; but 0 is not a $T^H$--slope, and if $-1$ is, then $\frac rs=-3+\frac 1n$ and, since $\frac rs\in\Z$, $\frac rs\in\{-4,-2\}$, which are
  actually excluded from this table.
\end{description}

\end{proof} 

\begin{proposition}\label{lens_Z_prop2}
If $\left(N\big(\frac rs, \frac tu),\alpha, \beta, \gamma\right)\in (S^H,T^H,Z)$ 
and $\Delta(\beta,\gamma)=2$ then it is equivalent to either $B'_n:=\left(N(-1+\tfrac{1}{n},-1-\tfrac{1}{n}), \infty,-3,-1\right)$
with 
$n\in\Z\setminus\{0,\pm1\}$, or to $C'_n :=\left(N(-1+\tfrac{1}{n},-1-\tfrac{1}{n-2}), \infty,-3,-1\right)$
with $n\in\Z\setminus\{0,\pm1,2,3\}$.
\end{proposition}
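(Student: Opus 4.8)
The plan is to rerun the apparatus behind Proposition~\ref{prop:Dist>2} and the enumeration of Section~\ref{t_3cl_5excep}, now searching for a type-$Z$ slope lying at distance $2$ from a $T^H$ slope in place of the distance-$3$ toroidal slope treated there. By \cite[Corollary~A.6]{Magic} a hyperbolic $N(\frac rs,\frac tu)$ either satisfies $e(N(\frac rs,\frac tu))=5$, so that \cite[Theorem~1.3]{Magic} and Lemma~\ref{N_factor} pin down $E(N(\frac rs,\frac tu))=\{\infty,-3,-2,-1,0\}$, or it is one of the manifolds recorded in \cite[Tables~A.2--A.9]{Magic}; I would dispose of these two regimes in turn.

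For the tables I would repeat the table-by-table sweep of Section~\ref{3cl_t_enum} and of the proof of Proposition~\ref{prop:Dist>2}, with the slogan ``$Z$-filling at distance $2$'' replacing ``$T$-filling at distance $3$''. The finite tables A.2--A.4 and A.9 yield only the $(-2,3,7)$ pretzel knot $N(-4,-\frac13)$, whose sole $(S^H,T^H,Z)$ triple realises $\Delta(\beta,\gamma)=2$ and which \cite[Theorem~1.5]{Magic} together with Lemma~\ref{3CL_sym} identifies with $B'_{-2}$. The four infinite families A.5--A.8 must be re-examined for $\Delta(\beta,\gamma)=2$ triples, since Proposition~\ref{prop:Dist>2} only excluded $\Delta(\beta,\gamma)>2$ there; I expect each to collapse exactly as in the $\Delta=3$ traversal, because in every such table either the candidate $T^H$ slope or the candidate $Z$ slope is forced onto an excluded value of $\frac rs$ or $\frac tu$ by \cite[Table~2]{Magic} and Lemma~\ref{N_factor}.

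The heart of the argument is the case $e=5$. Inside $\{\infty,-3,-2,-1,0\}$ the only slope pairs at distance $2$ are $\{-3,-1\}$ and $\{-2,0\}$, so $\{\beta,\gamma\}$ equals one of these. For each pair, and for each choice of which slope carries the $T^H$ filling and which the $Z$ filling, I would first invoke $\Delta(S^H,T^H)=1$ \cite{CGLS} to cut the admissible $S^H$-slope $\alpha$ down to one or two values, then read off from \cite[Table~2]{Magic} the constraints the prescribed $T^H$- and $Z$-fillings impose on $(\frac rs,\frac tu)$, and finally impose the $S^H$-condition, which for $\alpha=\infty$ is $tr-us=\pm1$ by \cite[Theorem~1.3]{Magic}. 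The single productive branch is $\beta=-3\in T^H$, $\gamma=-1\in Z$, $\alpha=\infty$: the $T^H$-filling at $-3$ forces $\frac rs=-1+\frac1n$ and $\frac tu=-1+\frac1m$ just as in Section~\ref{t_3cl_5excep}, while $tr-us=\pm1$ becomes $n+m\in\{0,2\}$, which returns precisely $B'_n$ and $C'_n$ (the requirement $-1\in Z$ then holds automatically, as recorded in Table~\ref{te}). All remaining branches---the opposite $T^H/Z$ assignment on $\{-3,-1\}$, the remaining admissible $S^H$-slopes, and the whole pair $\{-2,0\}$---I expect to die under Lemma~\ref{N_factor} or to admit no integral solution, exactly as the parallel branches do in Sections~\ref{lens_lens3CL_subsec} and~\ref{t_3cl_5excep}.

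The hard part will be reading the type-$Z$ conditions correctly off \cite[Table~2]{Magic}: as the footnote to Proposition~\ref{prop:Dist>2} cautions, the entries in the $\frac rs$- and $\frac tu$-columns (notably lines~4 and~5) are coupled more tightly than the published layout reveals, so the delicate point is to determine exactly which pairs $(\frac rs,\frac tu)$ send $-1$ to a small Seifert manifold while sending $-3$ to a lens space. Once this is settled the remaining diophantine bookkeeping is routine. A closing collation---using \cite[Theorem~1.5]{Magic} and Lemma~\ref{3CL_sym} to merge redundant representatives---then exhibits every surviving triple as $B'_n$ with $n\in\Z\setminus\{0,\pm1\}$ or $C'_n$ with $n\in\Z\setminus\{0,\pm1,2,3\}$, which is the assertion of the proposition.
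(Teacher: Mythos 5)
Your overall architecture matches the paper's proof: split via \cite[Corollary~A.6]{Magic} into the case $e\big(N(\frac rs,\frac tu)\big)=5$ (where $E=\{\infty,-3,-2,-1,0\}$ and the distance-2 pairs are $\{-3,-1\}$ and $\{-2,0\}$) and the case of \cite[Tables~A.2--A.9]{Magic}, and your computation on the main branch $(\alpha,\beta,\gamma)=(\infty,-3,-1)$ --- namely $\frac rs=-1+\frac1n$, $\frac tu=-1+\frac1m$, with $|rt-su|=1$ reducing to $n+m\in\{0,2\}$ --- is exactly the paper's. However, there is a genuine gap: your claim that this is ``the single productive branch'' and that all remaining branches ``die under Lemma~\ref{N_factor} or admit no integral solution'' is false. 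In the paper's analysis, five further sporadic triples survive: $\left(N(-4,-\tfrac52),-1,-2,0\right)$ and $\left(N(-4,-\tfrac94),-1,-2,0\right)$ from the branch $(\alpha,\beta,\gamma)=(-1,-2,0)$; $\left(N(-\tfrac52,-\tfrac12),\infty,-1,-3\right)$ from the opposite $T^H/Z$ assignment on $\{-3,-1\}$; and $\left(N(-4,-\tfrac13),\infty,0,-2\right)$, $\left(N(-4,-\tfrac15),\infty,0,-2\right)$ from the pair $\{-2,0\}$. So six of the ten branches produce solutions, not one.

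This matters because absorbing these sporadic triples into the families $B'_n$, $C'_n$ is not routine bookkeeping: it requires the specific homeomorphism identities (1.1), (1.3) and (1.4) of \cite[Proposition~1.5]{Magic} (e.g.\ identity (1.1) sends $\left(N(-4,-\tfrac52),-1,-2,0\right)$ to $\left(N(-\tfrac32,-\tfrac83),-2,-1,-3\right)$, which identity (1.3) then sends to $B'_2$), and Lemma~\ref{3CL_sym} alone does not suffice. Your ``closing collation'' gestures at \cite[Theorem~1.5]{Magic}, but under your stated expectation there would be nothing to collate beyond the pretzel knot from the finite tables, so the proposal as written is internally inconsistent: either the branch-by-branch claim is carried through (and the proof silently loses five triples whose equivalence to $B'_{\pm2}$, $C'_{\pm2}$ is precisely what must be verified), or the collation step must be expanded into the substantive identification argument that the paper actually performs. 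The same caveat applies, more mildly, to Tables A.5--A.8, which you only ``expect'' to collapse; the paper verifies this with explicit diophantine checks (e.g.\ $|5-8n|=1$, $|8-3n|=1$, $|3+5n|=1$ in Table~A.7).
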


\begin{proof}
By the same discussion that the one which begins the proof of Proposition \ref{prop:Dist>2}, we know  that either 
$E(N\big(\frac rs, \frac tu))=\{0,-1,-2,-3,\infty\}$ or $\{0,-1,-2,-3,\infty\}\varsubsetneq E(N\big(\frac rs, \frac tu))$ 
and, in the latter case, $N\big(\frac rs, \frac tu)$ and $E(N\big(\frac rs, \frac tu))$ are found in \cite[Tables~A.2\,-- A.9]{Magic}. 

\begin{description}
\item[Case $\{\alpha, \beta,
    \gamma\}\not\subset\{0,-1,-2,-3,\infty\}$] In this case, $N(\frac
  rs,\frac tu)$, $E(N(\frac rs,\frac tu))$ are found in
  \cite[Tables~A.2\,-- A.9]{Magic}.  It is immediately clear that the
  only $\left(N\big(\frac rs, \frac tu),\alpha, \beta,
    \gamma\right)\in (S^H,T^H,Z)$ in \cite[Tables~A.2\,-- A.4 and
  Table~A.9]{Magic} is the $(-2,3,7)$ pretzel knot exterior
  \fbox{$\left(N\big(-4, -\frac 13),\infty, 0, -2\right)$}.

  If  $\left(N\big(\frac rs, \frac tu),\alpha,
    \beta, \gamma\right)\in (S^H,T^H,Z)$ is found in
  \cite[Table~A.5]{Magic} then the exceptional set $E(N\big(\frac rs,
  \frac tu))$ is $\{-3,-2,-1,-0,1,\infty\}$ so that $1\in\{\alpha,\beta,\gamma\}$; but in this table $N(1,\frac
  rs)(1)\not\in T^H\cup S^H\cup Z$.

  If $\left(N\big(\frac rs, \frac
    tu\big),\alpha, \beta, \gamma\right)\in (S^H,T^H,Z)$ is found in
  \cite[Table~A.6]{Magic} then the exceptional set
  $E(N\big(\frac rs, \frac tu))$ is $\{-3,-\frac52,-2,-1,-0,\infty\}$ so that $-\frac 52\in\{\alpha,\beta,\gamma\}$. Moreover this table tells us that $N(\frac rs,
  -\frac 32)(-\frac 52)$ is in $S^H\cup T^H\cup Z$ only if $\frac
  rs=-2+\frac 1n$, in which case $N(-2+\frac{1}{n}, -\frac 32)(-\frac
  52)\in Z$. But we have $\Delta(\beta,-\frac52)= 2$ for $\beta\in
  E(N\big(\frac rs, \frac tu))$ only when $\beta=\infty$.
  We can also see from Table~A.6 that the only possible $S^H$--slopes on
  hyperbolic $N(\frac rs,-\frac32)$ are $\infty$ which is already the
  $T^H$--slope in our case, $-3$ but then $\frac rs=-2$ and this is discarded by Lemma~\ref{N_factor}, $-2$, and $-1$  but then $\frac rs=\infty$ and this is discarded by Lemma~\ref{N_factor}. We are hence left with $\alpha=-2$ and $|4r+11s|=1$. But $\frac rs=\frac{1-2n}{n}$, so $|4r+11s|=1$ if and only if
  $n=-1$. It would follow that $\frac rs=-3$ and this is excluded by Lemma
  \ref{N_factor}.

  If $\left(N\big(\frac rs, \frac
    tu\big),\alpha, \beta, \gamma\right)\in (S^H,T^H,Z)$ is found in
  \cite[Table~A.7]{Magic} then the exceptional set
  $E(N\big(\frac rs, \frac tu))$ is $\{-3,-2,-\frac32,-1,0,\infty\}$ so that $-\frac 32\in\{\alpha,\beta,\gamma\}$. This
  table also tells us that $N(\frac rs, -\frac 52)(-\frac 32)$ is in
  $S^H\cup T^H\cup Z$ only if $\frac rs=-2+\frac 1n$, in which case
  $N(-2+\frac{1}{n}, -\frac 52)(-\frac 32)\in Z$. Moreover, the only possible
  $S^H$--slopes found in this table are $\infty$, $-2$ and
  $-1$. More precisely, one can read that
  \begin{itemize}
  \item $N\left(-2+\tfrac{1}{n}, -\tfrac
      32\right)(\infty)=\lens{5-8n}{\star}$ which is $S^H$ if
    and only if $|5-8n|=1$, but this has no integer solution;
  \item $N\left(-2+\tfrac{1}{n}, -\tfrac 32\right)(-2)=\lens{8-3n}{\star}$
    which is $S^H$ if and only if $|8-3n|=1$, that is when $n=3$, but then $\frac rs=-\frac 53$ and we get 
    the $(-2,3,7)$ pretzel knot which is
    excluded from \cite[Table~A.7]{Magic};
  \item $N\left(-2+\tfrac{1}{n}, -\tfrac
      32\right)(-1)
    =\lens{3+5n}{\star}$
    which is $S^H$ if and only if $|3+5n|=1$, but this has no integer
    solution.
  \end{itemize}

  If $\left(N\big(\frac rs, \frac
    tu\big),\alpha, \beta, \gamma\right)\in (S^H,T^H,Z)$ is found in 
  \cite[Table~A.8]{Magic} then the exceptional set
  $E(N\big(\frac rs, \frac tu))$ is $\{-4,-3,-2,-1,0,\infty\}$ so that $-4\in\{\alpha,\beta,\gamma\}$. This table also
  tells us that $N(\frac rs, -\frac 12)(-4)$ is in $S^H\cup T^H\cup Z$
  only if $\frac rs=n$, in which case $N(n, -\frac 12)(-4)\in
  Z$. Moreover, the only possible
  $S^H$--slope are $\infty$, $-3$, $-2$ and $-1$, but in the last three cases $\frac rs=\infty$ and this is discarded by Lemma~\ref{N_factor}. Finally, $N\left(n, -\tfrac 12\right)(\infty)=\lens{n+2}{\star}$
  is $S^H$ if and only if $n=-3$, which makes $N\left(n, -\frac
    12\right)$ non-hyperbolic by Lemma~\ref{N_factor}.

  \item[Case $\{\alpha, \beta,
    \gamma\}\subset\{0,-1,-2,-3,\infty\}$] All examples can be
  constructed from \cite[Table~2]{Magic}. However, as noted in the footnote on page \pageref{f:1}, we warn the reader that Table 2, as given in the published version, misses some separating lines. We recommend hence to look at the arXiv version.

  We first show that we may assume that $\infty$
    corresponds to the $S^H$--slope. Indeed, we know by \cite{CGLS} that the distance between the $S^H$
  and the $T^H$--slope is 1, and we are looking to  $(S^H,T^H,Z)$ triple realizing $\Delta(T^H,Z)=2$. So, if $\infty$ is not an
  $S^H$--slope, then the triple $(\alpha,\beta,\gamma)$ belongs to $\big\{(-3,-2,0),(-2,-3,-1),(-2,-1,-3),(-1,-2,0),(-1,0,-2),(0,-1,-3)\big\}$.

  \begin{description}
  \item[Case $(\alpha,\beta,\gamma)=(-3,-2,0)$] Then for $-3$ to be a $S^H$--slope, we need that either $\frac rs=-2$, but this is discarded by Lemma~\ref{N_factor}, or $\frac{r}{s}=-1+\frac{1}{n}$ and $\frac tu=-1+\frac{1}{m}$. But since $0$ is a $Z$--slope, one of $\frac rs$ or $\frac tu$ has to be in $\Z$ and hence equal to $-2$ or 0. This is again forbidden by Lemma~\ref{N_factor}.

\item[Cases $(\alpha,\beta,\gamma)=(-2,-3,-1)$ and $(\alpha,\beta,\gamma)=(-2,-1,-3)$] 
Using Lemma \ref{3CL_sym} and Identity~(1.3) in
\cite[Proposition~1.5]{Magic}, we obtain that any such example is going to be 
equivalent to $\left( N\big(-\tfrac32,\tfrac{2t+5u}{t+2u}\big),\infty,-\tfrac{2\beta+5}{\beta+2},-\tfrac{2\gamma+5}{\gamma+2}\right)$, where $\infty$ is the $S^H$--slope. This identifies the triple
$(\alpha,\beta,\gamma)=(-2,-3,-1)$ with $(\alpha,\beta,\gamma)=(\infty,-1,-3)$ and the triple $(\alpha,\beta,\gamma)=(-2,-1,-3)$ with 
$(\alpha,\beta,\gamma)=(\infty,-3,-1)$ which are considered later.

%    \item[Case $(\alpha,\beta,\gamma)=(-2,-3,-1)$] Then for $-3$ to be a $T^H$--slope, we need that either $\frac rs=-2$, but this is discarded by Lemma~\ref{N_factor}, or $\frac{r}{s}=-1+\frac{1}{n}$ and $\frac tu=-1+\frac{1}{m}$. But for $-2$ to be a $S^H$--slope, one of $\frac rs$ or $\frac tu$ has to be of the form $-2+\frac 1k$ and hence equal to $-\frac 32$. Using Lemma \ref{3CL_sym} and Identity~(1.3) in \cite[Proposition~1.5]{Magic}, we obtain that any such example is equivalent to $\left( N\big(-\tfrac32,\tfrac{2t+5u}{t+2u}\big),\infty,-\tfrac{2\beta+5}{\beta+2},-\tfrac{2\gamma+5}{\gamma+2}\right)$ where $\infty$ is the $S^H$--slope.
%
%    \item[Case $(\alpha,\beta,\gamma)=(-2,-1,-3)$] Then, for $-3$ to be a $Z$--slope, we need that $\frac rs=-1+\frac1n$. Moreover, for $-1$ to be a $T^H$--slope, we need that either $\frac rs$ or $\frac tu$ is of the form $-3+\frac 1k$ and this cannot be $\frac rs$ otherwise we would have $\frac rs=-2$, which is discarded by Lemma~\ref{N_factor}. Finally, for $-2$ to be a $S^H$--slope, we need that either $\frac rs$ or $\frac tu$ is of the form $-2+\frac 1l$.
%
%If $\frac rs=-2+\frac 1l=-1+\frac 1n$, then $\frac rs=-\frac 32$ and, using Lemma \ref{3CL_sym} and Identity~(1.3) in \cite[Proposition~1.5]{Magic}, we see that it is equivalent to an example where $\infty$ is the $S^H$--slope.
%
%If $\frac tu=-2+\frac 1l=-3+\frac 1k$, then $\frac tu=-\frac 52$, but $N\big(-1+\tfrac1n, -\tfrac52\big)(-2)=\lens{3(-2)(1-n+2n)-2(1-n)-n}{\star}=\lens{-8-5n}{\star}\neq S^3$.
 
    \item[Case $(\alpha,\beta,\gamma)=(-1,-2,0)$] Then, for $-2$ to be a $T^H$--slope, we need that $\frac rs=-2+\frac1n$. Moreover, for $-1$ to be a $S^H$--slope, we need that either $\frac rs$ or $\frac tu$ is of the form $-3+\frac 1k$.

      If $\frac rs=-3+\frac 1k=-2+\frac 1n$, then $\frac rs=-\frac52$ so for 0 to a $Z$--slope, $\frac tu$ must be an integer $l$. In this case, $N\big(-\tfrac52,l\big)(-1)=\lens{3l+11}{\star}$ is $S^3$ if and only if $l=-4$; and indeed \fbox{$\left(N\big(-4,-\tfrac52\big),-1,-2,0\right)$} is a $\in(S^H,T^H,Z)$ triple.

       If $\frac tu=-3+\frac 1k$, then for 0 to a $Z$--slope, $\frac rs=-2+\frac1n$ or $\frac tu=-3+\frac 1k$ must be an integer. But the values $-3$, $-2$ and $-1$ are all discarded by Lemma~\ref{N_factor}, so the only remaining case is $\frac tu=-4$.  In this case, $N\big(-4,-2+\frac1n\big)(-1)=\lens{-3-n}{\star}$ is $S^3$ if and only if $n\in\{-2,-4\}$; and indeed $\left(N\big(-\tfrac52,
      -4\big),-1,-2,0\right)$ (already listed) and \fbox{$\left(N\big(-4,-\tfrac94\big),-1,-2,0\right)$} are $(S^H,T^H,Z)$ triples.

     \item[Case $(\alpha,\beta,\gamma)=(-1,0,-2)$]  Then, for $-1$ to be a $S^H$--slope, we need that $\frac rs=-3+\frac1n$. Moreover, for $0$ to be a $T^H$--slope, we need that $\big\{\frac rs,\frac tu\big\}=\big\{k,-4-k+\frac1m\big\}$.

       If $\frac rs=k=-3+\frac1n$, then either $\frac rs=-2$, but this is discarded by  Lemma~\ref{N_factor}, or $\frac rs=-4$ and then $\frac tu=\frac1m$. In this case $N\big(-4,\tfrac1m\big)(-1)=\lens{-3-7m}{\star}\neq S^3$.

       If $\frac rs=-4-k+\frac1m=-3+\frac1n$ then $k=-1+\frac1m-\frac1n$ so that $\frac tu=k\in\{-3,-2,-1,0,1\}$. But the values $-3$, $-2$, $-1$ and 0 are all discarded by Lemma~\ref{N_factor}, so the only remaining case is $\frac tu=1$, implying that $n=-1$ so that $\frac rs=-4$. In this case, $N\big(-4,1\big)(-1)=\lens{-10}{\star}\neq S^3$.

    \item[Case $(\alpha,\beta,\gamma)=(0,-1,-3)$] Then, for $0$ to be a $S^H$--slope, we need that $\big\{\frac rs,\frac tu\big\}=\big\{n,-4-n+\frac1m\big\}$ and moreover that $m=0$. It follows that either $\frac rs$ or $\frac tu$ is $\infty$, which is discarded by Lemma~\ref{N_factor}.
  \end{description}

We can now assume that $\infty$ is the $S^H$--slope, that is $\alpha=\infty$ and
  \begin{equation}\label{c1}
    |rt-su|=1.
  \end{equation}
 Moreover, $\{\beta,\gamma\}\subset\{-3,-2,-1,0\}$, so for $\Delta(\beta,\gamma)=2$ to hold, the only possibilities are $(\beta,\gamma)\in\big\{(-3,-1),(-1,3),(-2,0),(0,-2)\big\}$.

 \begin{description}
  \item[Case $(\alpha,\beta,\gamma)=(\infty,-3,-1)$] Then for $-3$ to be a $T^H$--slope, we need that either $\frac rs=-2$, but this is discarded by Lemma~\ref{N_factor}, or $\frac{r}{s}=-1+\frac{1}{n}$ and $\frac tu=-1+\frac{1}{m}$. Condition (\ref{c1}) becomes then $(1-n)(1-m)-nm=\pm1$, that is $ m=1\pm1-n$. This leads to \fbox{$\left(N\big(-1+\frac1n,-1-\frac1n\big),\infty,-3,-1\right)$} and \fbox{$\left(N\big(-1+\frac1n,-1-\frac1{n-2}\big),\infty,-3,-1\right)$} which are indeed families of $(S^H,T^H,Z)$ triples.

    \item[Case $(\alpha,\beta,\gamma)=(\infty,-1,-3)$] Then for $-1$ to be a $T^H$--slope, we need that $\frac rs=-3+\frac1n$. Moreover, for  $-3$ to be a $Z$--slope, we need that either $\frac rs$ or $\frac tu$ is of the form $-1+\frac 1k$ and this cannot be $\frac rs$ otherwise we would have $\frac rs=-2$, which is discarded by Lemma~\ref{N_factor}. We have then $\big(\frac rs,\frac tu\big)=\big(-3+\frac1n,-1+\frac1m\big)$ and condition (\ref{c1}) becomes $(1-3n)(1-m)-nm=\pm1$, that is $m=\frac{3n}{2n-1}$ or $\frac{3n-2}{2n-1}$. This implies that $n\in\{-1,0,1,2\}$ in the former case and that $n\in\{0,1\}$
   in the latter. Cases $n\in\{0,1\}$ make $\frac rs\in\{-2,\infty\}$ and are excluded by Lemma \ref{N_factor}. If $n=-1$
   then $m=1$ and $\frac tu=0$ which is also
   excluded by Lemma \ref{N_factor}. If $n=2$ then
   $m=2$, $\frac rs=-\frac52$ and $\frac tu=-\frac12$; and indeed \fbox{$\left(N\big(-\tfrac52,-\tfrac12\big),\infty,-1,-3\right)$} is a $(S^H,T^H,Z)$ triple.

 \item[Case $(\alpha,\beta,\gamma)=(\infty,-2,0)$] Then for $-2$ to be a $T^H$--slope, we need that $\frac rs=-2+\frac1n$. Moreover, for  $0$ to be a $Z$--slope, we need that either $\frac rs$ or $\frac tu$ is an integer $k$ and this cannot be $\frac rs$ otherwise we would have $\frac rs\in\{-3,-1\}$, which is discarded by Lemma~\ref{N_factor}. We have then $\big(\frac rs,\frac tu\big)=\big(-2+\frac1n,k\big)$ and condition (\ref{c1}) becomes $k(1-2n)-n=\pm1$, that is $k=\frac{n+1}{1-2n}$ or $\frac{n-1}{1-2n}$. This implies that $(n,k)\in\big\{(-1,0),(0,-1),(0,1),(1,-2),(1,0),(2,-1)\big\}$, and in all cases we have either $n\in\{-1,0,1\}$ or $k=-1$, that is either $\frac rs\in\{-3,-1,\infty\}$ or $\frac tu=-1$. All are discarded by Lemma \ref{N_factor}.

    \item[Case $(\alpha,\beta,\gamma)=(\infty,0,-2)$] Then for $0$ to be a $T^H$--slope, we need that $\frac rs=n$ and $\frac tu=-4-n+\frac1m$. Condition (\ref{c1}) becomes $n(1-4m-nm)=m\pm1$ whose solutions are given Lemma \ref{lem:QuadEq}. First, we can exclude all solution with $\frac rs=n\in\{-3,-2,-1,0\}$ which are discarded by Lemma \ref{N_factor}. We also exclude $(n,k)=(-5,-1)$ and $(n,k)=(1,0)$ which give $\frac tu\in\{0,\infty\}$, also discarded by Lemma \ref{N_factor}. We are then left with $\left(N\big(-4,-\frac13\big),\infty,0,-2\right)$ (already listed) and \fbox{$\left(N\big(-4,-\frac15\big),\infty,0,-2\right)$} which are indeed $(S^H,T^H,Z)$ triples.
 \end{description}
\end{description}
  In the above analysis, we have proved that every $\left(N\big(\frac rs, \frac
    tu),\alpha, \beta, \gamma\right)$ which is a $(S^H,T^H,Z)$ triple with
  $\Delta(\beta,\gamma)=2$ is equivalent to one of
  \begin{enumerate}[(i)]
  \item\label{cas1}
    $B'_n:=
    \left(N(-1+\frac1n,-1-\frac1n),\infty,-3,-1\right)$
    for $n\in\Z\setminus\{0,\pm1\}$;
%and since $B'_{n}=B'_{-n}$ we can restrict to $n\geq 2$;
 \item\label{cas2}
    $C'_n:=
    \left(N(-1+\frac1n,-1-\frac1{n-2}),\infty,-3,-1\right)$
    for $n\in\Z\setminus\{0,\pm1,2,3\}$;
%, and since $C'_{-n}=C'_{n+2}$ we can restrict to $n\geq 4$;
  \end{enumerate}
\begin{multicols}{2}
  \begin{enumerate}[(i)]
    \setcounter{enumi}{2}
 \item\label{cas3} $\left(N(-4,-\frac13),\infty,0,-2)\right)$;
 \item\label{cas4} $\left(N(-4,-\frac15),\infty,0,-2)\right)$;
 \item\label{cas5} $\left(N(-4,-\frac52),-1,-2,0)\right)$;
 \item\label{cas6} $\left(N(-4,-\frac94),-1,-2,0)\right)$;
 \item\label{cas7} $\left(N(-\frac52,-\frac12),\infty,-1,-3)\right)$.
  \end{enumerate}
\end{multicols}

But now, using Identitiy (1.1) in \cite[Proposition 1.5]{Magic}, we obtain that cases (\ref{cas3}), (\ref{cas4}), (\ref{cas5}) and (\ref{cas6}) are respectivelu equivalent to $B'_{-2}$, $C'_{-2}$, $\left(N(-\frac32,-\frac83),-2,-1,-3)\right)$ and $\left(N(-\frac32,-\frac{14}5),-2,-1,-3)\right)$; and that, using Identitiy (1.3) in \cite[Proposition 1.5]{Magic}, that the latter two are equivalent to, respectively, $B'_2$ and $C'_2$. Finally, using Lemma \ref{3CL_sym} and  Identitiy (1.4) in \cite[Proposition 1.5]{Magic}, we obtain that case (\ref{cas7}) is equivalent to $B'_2$. Moreover, $B'_{-n}\cong B'_n$ and $C'_{-n}\cong C_{n+2}$ for every $n\geq 2$ because of Lemma \ref{3CL_sym}, and we already observed in Section \ref{t_conclusion} that the two families are
distinct. This completes the proof.
\end{proof}

\appendix
\section{Facts used liberally throughout this article}

The classification in this article comes from a careful consideration of the tables found in 
\cite{Magic} and \cite{excep_slopes}. Often, cases considered in the enumeration are identified 
and/or discounted using technical results, most of which are found in \cite{Magic} and 
\cite{excep_slopes}. To keep this article as self-contained as possible we list the technical 
lemmata that are used in this article. 

\subsection{Identities between graph manifolds}

The following lemma consists of a list of identities between graph manifolds which are found in both 
\cite{excep_slopes} and \cite{Magic}. Details can be found in \cite{FomMat}.

\begin{lemma}\label{graph_id_lemma}
The following identities on graph manifolds hold:

\begin{align}\label{graph_eq1}
\seifdue D1bcd\bigu 0110 \seifdue Defgh &= \seifdue Defgh\bigu 0110 \seifdue D1bcd \\&=
\seiftre{S^2}efgh{d+bc}{-c}\nonumber 
\end{align}
\begin{gather} 
\seiftre{S^2}abcd01= \lens ab\# \lens cd \label{graph_eq2} \\
\seiftre{S^2}abcd1e = \lens{a(d+ce)+bc}{\star}\label{graph_eq3}
\end{gather}

\end{lemma}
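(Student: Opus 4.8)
The plan is to read all three lines as instances of the standard Seifert-invariant calculus for graph manifolds, tracking the fixed homology bases $\{(\mu_i,\lambda_i)\}$ on each gluing torus and computing the effect of the gluing matrices; complete details of the elementary moves involved are in \cite{FomMat}. The whole content of the lemma reduces to two phenomena. First, a fibre of multiplicity $1$ (the datum $(1,b)$ in \eqref{graph_eq1} and $(1,e)$ in \eqref{graph_eq3}) is not a genuine exceptional fibre and may be absorbed into the neighbouring Seifert data; in invariant terms, deleting a $(1,e)$ fibre while replacing an adjacent $(c,d)$ by $(c,d+ce)$ does not change the manifold, since it leaves $\sum_i \beta_i/\alpha_i$ unchanged. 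Second, a fibre of multiplicity $0$ (the datum $(0,1)$ in \eqref{graph_eq2}) is produced by gluing a solid torus whose meridian is a regular fibre, so the generic fibre bounds a disc and the fibration becomes reducible.

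For \eqref{graph_eq1} the first equality is immediate: $\matr0110$ is symmetric and equal to its own inverse, so the definition of $X\cup_B Y$ gives $\seifdue D1bcd\bigu0110\seifdue Defgh = \seifdue Defgh\bigu0110\seifdue D1bcd$ with no computation. For the second equality I would use that $\matr0110$ interchanges the fibre and section directions across the gluing torus, so the Seifert fibration of $\seifdue Defgh$ extends into the piece $\seifdue D1bcd$ and the two base discs assemble into $S^2$. Since the latter piece carries, besides $(c,d)$, only the non-exceptional fibre $(1,b)$, after refibering it contributes a single new exceptional fibre; writing the gluing in the fixed bases and composing the datum $(c,d)$ with $\matr0110$ together with the framing $b$ yields exactly $(d+bc,-c)$. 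The one genuine computation is this $2\times2$ integer manipulation.

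For \eqref{graph_eq2}, the reducibility observed above realises the manifold as a connected sum: the essential sphere arising from the bounding fibre separates $\seiftre{S^2}abcd01$ into two Seifert pieces over a disc, each retaining a single genuine exceptional fibre and hence a lens space, which one identifies as $\lens ab$ and $\lens cd$. For \eqref{graph_eq3}, I would first absorb the trivial fibre $(1,e)$ as above, reducing $\seiftre{S^2}abcd1e$ to the two-fibre manifold $\seifdue{S^2}ab{c}{d+ce}$, and then apply the classical formula that a Seifert fibration over $S^2$ with exceptional fibres $(\alpha_1,\beta_1),(\alpha_2,\beta_2)$ is the lens space $L(\alpha_1\beta_2+\alpha_2\beta_1,\star)$; here this evaluates to $\lens{a(d+ce)+bc}{\star}$.

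The main obstacle is bookkeeping rather than conceptual: one must fix the orientation conventions for the bases $(\mu_i,\lambda_i)$ compatibly with \cite{excep_slopes} and \cite{Magic}, and then verify that the matrix compositions reproduce the entries $(d+bc,-c)$ and $a(d+ce)+bc$ with the correct signs --- bearing in mind that for the lens-space statements only $|p|$ is determined, which is why the second coordinate is recorded as $\star$. I would guard against sign slips by checking the formulae on degenerate special cases (for example $b=0$ or $e=0$) and against \cite{FomMat}.
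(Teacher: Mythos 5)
Your proposal is correct: the first equality follows from $\left(\begin{smallmatrix}0&1\\1&0\end{smallmatrix}\right)$ being its own inverse, the multiplicity-$1$ data $(1,b)$, $(1,e)$ absorb as you describe (your adjusted invariant $(d+bc,-c)$ and the formula $\lens{a(d+ce)+bc}{\star}$ both check out in the conventions of the paper), and the $(0,1)$ fibre produces the reducing sphere giving $\lens ab\#\lens cd$. This is essentially the paper's approach: the paper gives no argument at all, simply citing \cite{excep_slopes}, \cite{Magic} and \cite{FomMat} for these standard identities, and your sketch is precisely the Seifert-invariant calculus those references contain.
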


The following obvious lemma is used throughout the article. 
\begin{lemma}\label{l:down}
  \begin{itemize}
  \item[]
  \item If $\seifdue Dabcd\bigu 0110\seifdue Defgh$ is a Seifert space, a lens space or $S^{3}$, then one of
    $|a|,|c|,|e|$ or $|g|$ is less than or equal to 1.
  \item If $\seiftre {S^{2}}abcdef$ is a lens space or $S^{3}$, then one of
    $|a|$, $|c|$ or $|e|$ is equal to 1.
  \end{itemize}
\end{lemma}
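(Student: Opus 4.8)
The plan is to derive both statements from the structure theory of Seifert fibered and graph manifolds, arguing each by contraposition. I would record at the outset two classical facts: (a) a closed orientable Seifert fibered space over $S^2$ has cyclic fundamental group --- hence is a lens space or $S^3$ --- if and only if it carries at most two exceptional fibres; and (b) a Seifert fibered space over the disc with two genuine exceptional fibres (both of multiplicity $\geq 2$) has incompressible boundary torus.

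For the second bullet I would proceed as follows. Suppose $|a|,|c|,|e|\geq 2$, so that $\seiftre{S^2}abcdef$ is genuinely fibered over $S^2$ with the three exceptional fibres $(a,b),(c,d),(e,f)$. By fact (a) its fundamental group is then non-cyclic; concretely this is visible from the orbifold Euler characteristic $\chi^{\mathrm{orb}}=2-\sum(1-1/|\cdot|)$, which is negative (hyperbolic base, infinite $\pi_1$), zero (Euclidean base), or positive only over $S^2(2,2,n)$, $S^2(2,3,3)$, $S^2(2,3,4)$, $S^2(2,3,5)$, all yielding finite non-cyclic $\pi_1$. Since lens spaces and $S^3$ have cyclic fundamental group, the manifold is neither, which is the desired contrapositive. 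The degenerate possibilities $|a|=0$, $|c|=0$ or $|e|=0$ are not exceptional fibres at all and are handled directly by \eqref{graph_eq2}: the manifold becomes a connected sum of two lens spaces, which is itself a lens space only when one of the surviving coefficients equals $\pm1$.

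For the first bullet I again argue by contraposition and assume $|a|,|c|,|e|,|g|\geq 2$. Then $X_1=\seifdue Dabcd$ and $X_2=\seifdue Defgh$ are each fibered over the disc with two genuine exceptional fibres, so by fact (b) each has incompressible boundary. I would then show that the gluing torus $T=\partial X_1=\partial X_2$ is incompressible in $M=X_1\cup_B X_2$, where $B=\matr 0110$: an innermost-disc argument applied to a hypothetical compressing disc pushes it entirely into $X_1$ or $X_2$, contradicting the incompressibility of their boundaries. Since $M$ is closed and $T$ separates it into the two non-trivial pieces $X_1$ and $X_2$ (neither a solid torus nor $T^2\times I$), the torus $T$ is essential. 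Consequently $M$ is toroidal, so it is neither $S^3$ nor a lens space (both being atoroidal), nor a small closed Seifert manifold (type $Z$, again atoroidal); this is exactly the contrapositive.

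The delicate point --- and the step I would expect to be the main obstacle --- is precisely the claim that $T$ is \emph{essential}, not merely incompressible. If $M$ fails to be Seifert fibered this is immediate, but one must rule out the possibility that $M$ is Seifert fibered with $T$ an inessential vertical torus. This can only happen through a re-fibering of one piece, which occurs solely when a piece is the twisted $I$--bundle over the Klein bottle $K\widetilde{\times}I$ (the only fibre-non-unique option consistent with one boundary torus and two order-$2$ exceptional fibres). In that case matching the Möbius fibration of that piece with the disc fibration of the other produces a Seifert fibration over $\mathbb{RP}^2$ with two exceptional fibres; but such a manifold still contains the essential vertical torus lying over the curve separating the Möbius band from the disc with two cone points, so $M$ is toroidal and the conclusion stands. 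I would verify this last assertion carefully, as it is the only case where the naive fibre-matching argument needs supplementing.
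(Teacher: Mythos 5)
The paper never actually proves this lemma --- it is introduced as ``the following obvious lemma'' and used without justification --- so there is no argument of the paper to compare yours against; your strategy is the natural one. Your second bullet is correct: when $|a|,|c|,|e|\geq 2$ the fundamental group surjects onto the von Dyck triangle group $\Delta(|a|,|c|,|e|)$, which is never cyclic, and a vanishing coefficient is handled by \eqref{graph_eq2}. In the first bullet, the incompressibility of the gluing torus $T$ (from incompressibility of the two boundaries plus the innermost-disc argument) and your identification of the twisted $I$--bundle over the Klein bottle as the only source of re-fibering are also correct. The genuine gap is the very last inference, ``$M$ toroidal $\Rightarrow M$ is not a small closed Seifert manifold''. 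That implication is false: the Seifert spaces over $S^{2}(3,3,3)$, $S^{2}(2,4,4)$ and $S^{2}(2,3,6)$ with Euler number zero --- for instance $\seiftre{S^2}3131{3}{-2}$ --- have three exceptional fibres over $S^{2}$, hence are of type $Z$, yet each is a flat torus bundle over $S^{1}$ and contains its incompressible fibre torus. Atoroidality disposes of $S^{3}$ and lens spaces, but not of type $Z$.

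This gap cannot be waved away, because no proof can end with ``$M$ is toroidal'' or even ``$M$ is not Seifert fibred'': read with ``Seifert space'' meaning an arbitrary Seifert fibred space, the first bullet is simply false. Indeed, in your own re-fibering case the M\"obius fibre of $X_{1}=\seifdue D21{2}{-1}$ is exactly $\mu_{1}$ (it is the unique slope of $X_1$ whose filling has infinite first homology, and that filling has $\pi_{1}\cong\Z$), so the gluing matrix carries it to the fibre of $X_{2}$, and $\seifdue D21{2}{-1}\bigu0110\seifdue D3131$ is Seifert fibred over $\mathbb{RP}^{2}(3,3)$ even though all four multiplicities are $\geq 2$. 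Thus the lemma must be read --- as you implicitly do, and as the paper applies it --- with ``Seifert space'' meaning type $Z$, and type $Z$ must be excluded by a dedicated argument. The repair uses that your torus $T$ \emph{separates} $M$: a closed Seifert space over $S^{2}(p,q,r)$ contains no separating incompressible torus. Such a torus is isotopic to a vertical or a horizontal one; a vertical torus lies over a circle having at most one cone point on one side, so it bounds a fibred solid torus and compresses; a horizontal incompressible torus forces Euler number zero and Euclidean base, i.e.\ one of the three flat bundles above, and if it separated, it would split the manifold into two twisted $I$--bundles over the Klein bottle, exhibiting it as a torus semi-bundle --- which none of the three is (for $S^{2}(3,3,3)$ and $S^{2}(2,3,6)$ the first homology does not surject onto $\Z/2\oplus\Z/2$; for $S^{2}(2,4,4)$ note that a Seifert semi-bundle fibres only over the Klein bottle, $\mathbb{RP}^{2}(2,2)$ or $S^{2}(2,2,2,2)$, contradicting uniqueness of its Seifert fibration). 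With this fact in hand, both your generic case and your re-fibering case close up, and the lemma follows.
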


\subsection{Concerning surgery instruction on 5CL} 

\begin{lemma}[{\cite[Lemma~2.2]{excep_slopes}}]\label{sym} The action of \emph{Aut}$(M_5)$ on surgery instructions on \emph{5CL} is generated by 
\emph{(\ref{linksymeq1})--(\ref{symeq11})}.
Moreover, for $\ref{symeq1}\leq n \leq \ref{symeq11}$ each \emph{($n$)} 
corresponds to the action of a distinct element of 
\emph{Aut}$(M_5)/G$ where $G$ is the subgroup generated by the elements \emph{(\ref{linksymeq1})--(\ref{linksymeq2})} 
corresponding to the generators of the link symmetry group of \emph{5CL}.
\begin{equation}\label{linksymeq1}
\left( \alpha_1, \alpha_2, \alpha_3, \alpha_4, \alpha_5 \right) \, \, \, \longmapsto \, \, \, 
\left( \alpha_5, \alpha_1, \alpha_2, \alpha_3, \alpha_4 \right) 
\end{equation}
\begin{equation}\label{linksymeq2}
\left( \alpha_1, \alpha_2, \alpha_3, \alpha_4, \alpha_5 \right) \, \, \, \longmapsto \, \, \, \left( \alpha_5, \alpha_4, \alpha_3, \alpha_2, \alpha_1 \right)\end{equation}
\begin{equation}\label{symeq1}
\big( \tfrac{a}{b}, \tfrac{c}{d}, \tfrac{e}{f}, \tfrac{g}{h}, \tfrac{i}{j} \big) \, \, \, \longmapsto \, \, \, \big( \tfrac{f}{e}, \tfrac{j-i}{j}, \tfrac{a}{a-b}, \tfrac{d-c}{d}, \tfrac{h}{g} \big) 
\end{equation}
\begin{equation}\label{symeq2}
\big( \tfrac{a}{b}, \tfrac{c}{d}, \tfrac{e}{f}, \tfrac{g}{h}, \tfrac{i}{j} \big) \, \, \, \longmapsto \, \, \, \big( \tfrac{b}{b-a}, \tfrac{i-j}{i}, \tfrac{e-f}{e}, \tfrac{d}{d-c}, \tfrac{g}{h} \big) 
\end{equation}
\begin{equation}\label{symeq3}
\big( \tfrac{a}{b}, \tfrac{c}{d}, \tfrac{e}{f}, \tfrac{g}{h}, \tfrac{i}{j} \big) \, \, \, \longmapsto \, \, \, \big( \tfrac{i}{i-j}, \tfrac{b-a}{b}, \tfrac{f}{e}, \tfrac{d}{c}, \tfrac{h-g}{h} \big) 
\end{equation}
\begin{equation}\label{symeq4}
\big( \tfrac{a}{b}, \tfrac{c}{d}, \tfrac{e}{f}, \tfrac{g}{h}, \tfrac{i}{j} \big) \, \, \, \longmapsto \, \, \, \big( \tfrac{j}{j-i}, \tfrac{e}{f}, \tfrac{b}{b-a}, \tfrac{c-d}{c}, \tfrac{g-h}{g} \big)  
\end{equation}
\begin{equation}\label{symeq5}
\big( \tfrac{a}{b}, \tfrac{c}{d}, \tfrac{e}{f}, \tfrac{g}{h}, \tfrac{i}{j} \big) \, \, \, \longmapsto \, \, \, \big( \tfrac{a}{a-b}, \tfrac{e}{e-f}, \tfrac{i}{i-j}, \tfrac{c}{c-d}, \tfrac{g}{g-h} \big) 
\end{equation}
\begin{equation}\label{symeq6}
\big( \tfrac{a}{b}, \tfrac{c}{d}, \tfrac{e}{f}, \tfrac{g}{h}, \tfrac{i}{j} \big) \, \, \, \longmapsto \, \, \, \big( \tfrac{h}{g}, \tfrac{j}{i}, \tfrac{f-e}{f}, \tfrac{c}{c-d}, \tfrac{b-a}{b} \big)
\end{equation}
\begin{equation}\label{symeq7}
\big( \tfrac{a}{b}, \tfrac{c}{d}, \tfrac{e}{f}, \tfrac{g}{h}, \tfrac{i}{j} \big) \, \, \, \longmapsto \, \, \, \big( \tfrac{h}{h-g}, \tfrac{a}{b}, \tfrac{f}{f-e}, \tfrac{c-d}{c}, \tfrac{i-j}{i} \big) 
\end{equation}
\begin{equation}\label{symeq8}
\big( \tfrac{a}{b}, \tfrac{c}{d}, \tfrac{e}{f}, \tfrac{g}{h}, \tfrac{i}{j} \big) \, \, \, \longmapsto \, \, \, \big( \tfrac{g}{g-h}, \tfrac{f-e}{f}, \tfrac{b}{a}, \tfrac{d}{c}, \tfrac{j-i}{j} \big)
\end{equation}
\begin{equation}\label{symeq9}
\big( \tfrac{a}{b}, \tfrac{c}{d}, \tfrac{e}{f}, \tfrac{g}{h}, \tfrac{i}{j} \big) \, \, \, \longmapsto \, \, \, \big( \tfrac{g-h}{g}, \tfrac{f}{f-e}, \tfrac{i}{j}, \tfrac{d}{d-c}, \tfrac{a-b}{a} \big) 
\end{equation}
\begin{equation}\label{symeq10}
\big( \tfrac{a}{b}, \tfrac{c}{d}, \tfrac{e}{f}, \tfrac{g}{h}, \tfrac{i}{j} \big) \, \, \, \longmapsto \, \, \, \big( \tfrac{h-g}{h}, \tfrac{b}{a}, \tfrac{j}{i}, \tfrac{d-c}{d}, \tfrac{e}{e-f} \big)
\end{equation}
\begin{equation}\label{symeq11}
\big( \tfrac{a}{b}, \tfrac{c}{d}, \tfrac{e}{f}, \tfrac{g}{h}, \tfrac{i}{j} \big) \, \, \, \longmapsto \, \, \, \big( \tfrac{a-b}{a}, \tfrac{e-f}{e}, \tfrac{h}{h-g}, \tfrac{c}{d}, \tfrac{j}{j-i} \big).
\end{equation}
\end{lemma}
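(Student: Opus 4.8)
The plan is to identify $\mathrm{Aut}(M_5)$ with a finite isometry group and then to read off its action on peripheral framings. Because $M_5$ is a finite--volume cusped hyperbolic $3$--manifold, Mostow--Prasad rigidity identifies the group of self--homeomorphisms up to isotopy with the isometry group $\mathrm{Isom}(M_5)$, which is finite. Every such isometry permutes the five cusps and acts on the first homology of each cusp torus by an element of $\mathrm{GL}_2(\mathbb{Z})$ with respect to the fixed meridian--longitude bases. The action on surgery instructions therefore factors through this finite group, so it suffices to determine $\mathrm{Isom}(M_5)$ together with the induced peripheral transformations.

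First I would determine $\mathrm{Isom}(M_5)$ explicitly, either from a maximally symmetric ideal triangulation of $M_5$ --- the minimal twisting being chosen precisely so that the symmetry group is strictly larger than the symmetries visible in the $S^3$ diagram --- or by a rigorous symmetry computation in SnapPea. The expected outcome is that $\mathrm{Isom}(M_5)$ has order $120$ and acts on the five cusps as the full symmetric group $S_5$. The symmetries induced by the link $\mathrm{5CL}\subset S^3$ form the subgroup $G$: the rotation by $2\pi/5$ gives the cyclic relabelling of \eqref{linksymeq1} and the diagram reflection gives the involution of \eqref{linksymeq2}, so $G$ is the dihedral group $D_5$ of order $10$, realised in $S_5$ as the symmetry group of a pentagon. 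Hence the coset space $\mathrm{Isom}(M_5)/G\cong S_5/D_5$ has exactly $120/10=12$ elements.

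The substantive step is to convert each abstract isometry into the explicit transformation recorded in \eqref{symeq1}--\eqref{symeq11}. For a chosen representative $\phi$ one records its induced cusp permutation $\sigma$ together with, for each cusp, the $\mathrm{GL}_2(\mathbb{Z})$ matrix expressing $\phi_\ast$ on $(\mu_i,\lambda_i)$; a slope $\tfrac{a}{b}$ on cusp $i$ is then sent to the slope on cusp $\sigma(i)$ obtained by applying that matrix. The normalised fractions such as $\tfrac{a}{a-b}$ and $\tfrac{c-d}{c}$ in the formulas encode exactly these peripheral matrices, the shears appearing because a hidden symmetry need not preserve the chosen longitudes but may mix them with meridians. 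I expect this bookkeeping --- determining, for each of the eleven coset representatives, both the cusp permutation and the correct framing matrix with the right signs --- to be the main obstacle and the only genuinely delicate part of the argument.

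Finally, for the ``moreover'' clause I would check that \eqref{symeq1}--\eqref{symeq11} lie in eleven pairwise distinct nontrivial cosets of $G$ and that, with the identity coset, they exhaust $\mathrm{Isom}(M_5)/G$. Distinctness is a finite verification: apply each map to a generic instruction $\big(\tfrac{a}{b},\dots,\tfrac{i}{j}\big)$ and confirm that no two outputs coincide after composing with an element of $G$, equivalently compare the induced cusp permutations modulo the dihedral action on the pentagon. Since there are exactly twelve cosets and we have exhibited twelve representatives, this simultaneously shows that \eqref{linksymeq1}--\eqref{symeq11} generate the full action of $\mathrm{Aut}(M_5)$, completing the proof.
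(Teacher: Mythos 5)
First, a point of reference: this paper never proves Lemma~\ref{sym} at all --- it is imported wholesale, with citation, from \cite[Lemma~2.2]{excep_slopes} --- so there is no in-paper proof to compare yours against, and your outline has to stand on its own. Its overall architecture (Mostow--Prasad rigidity to replace $\mathrm{Aut}(M_5)$ by the finite group $\mathrm{Isom}(M_5)$, an explicit determination of that group, then cusp-by-cusp bookkeeping of permutations and $\mathrm{GL}_2(\Z)$ framing matrices) is indeed the natural route to this statement.

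There is, however, a concrete error at exactly the step your coset count relies on. The isometry group of $M_5$ is not $S_5$ of order $120$ acting faithfully on the cusps: it is $\Z/2\times S_5$, of order $240$. Like every chain link exterior, $M_5$ admits a strong inversion, an involution preserving each component and acting as $-\mathrm{Id}$ on the first homology of every cusp torus; it fixes every cusp and every \emph{unoriented} slope, so it is invisible on surgery instructions, but it is very much present in $\mathrm{Isom}(M_5)$. Consequently the symmetry computation you propose would return order $240$ and a non-faithful cusp action, contradicting your ``expected outcome'', and the identification $\mathrm{Isom}(M_5)/G\cong S_5/D_5$ with $120/10=12$ elements collapses as stated. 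What rescues the count --- and what your write-up never articulates --- is that the action of $\mathrm{Aut}(M_5)$ on surgery instructions is \emph{not faithful}: its image is $\mathrm{Isom}(M_5)/\langle\text{strong inversion}\rangle\cong S_5$, of order $120$, and it is the cosets of $G\cong D_5$ inside this image group (equivalently, cosets of the order-$20$ visible symmetry group inside the order-$240$ isometry group) that number twelve. This distinction also underpins your final verification: comparing cusp permutations modulo $D_5$ distinguishes the eleven maps only because the image group is exactly $S_5$, so that a transformation is determined by its cusp permutation. Finally, even with this repaired, your proposal defers precisely the substantive content of the lemma: a certified determination of $\mathrm{Isom}(M_5)$ (e.g.\ via the canonical Epstein--Penner decomposition rather than an uncertified computer call) and the derivation of the eleven explicit formulas \eqref{symeq1}--\eqref{symeq11} with the correct permutations, shears and signs, which you yourself flag as ``the main obstacle''. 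As written, this is a sensible strategy containing one false structural claim and with the two hard steps left open; it is not yet a proof.
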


\begin{lemma}[{\cite[Theorem~4 and Eq. (70)]{excep_slopes}}]\label{5CL_factor} The following statements hold: 	
\begin{itemize}
\item If $0,1,\infty\in\{\frac ab,\frac cd,\frac ef,\frac gh,\frac ij\}$ then 
$M_5(\frac ab,\frac cd,\frac ef,\frac gh,\frac ij)$ is non-hyperbolic.
\item If $-1,\frac12,2\in\{\frac ab,\frac cd,\frac ef,\frac gh,\frac ij\}$ then 
$(\frac ab,\frac cd,\frac ef,\frac gh,\frac ij)$ factors through $M_4$. 
\end{itemize}
\end{lemma}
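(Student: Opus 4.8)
The plan is to exploit the large symmetry group of $M_5$ recorded in Lemma~\ref{sym} to reduce each of the two statements to a single representative slope on a single component. The key observation is that the two distinguished triples $\{0,1,\infty\}$ and $\{-1,\tfrac12,2\}$ are exactly the two orbits of size three of the anharmonic group $\Gamma\cong S_3$, namely the six M\"obius maps $x,\tfrac1x,1-x,\tfrac1{1-x},\tfrac{x-1}x,\tfrac x{x-1}$ fixing $\{0,1,\infty\}$ setwise. Inspecting the explicit formulas (\ref{symeq1})--(\ref{symeq11}), the action of $\mathrm{Aut}(M_5)$ sends the slope of each component to an expression of the form $\tfrac a{a-b}$, $\tfrac b{b-a}$, $\tfrac ba$ or $\tfrac{a-b}a$, each of which lies in $\Gamma$, while the components themselves are permuted according to (\ref{linksymeq1})--(\ref{linksymeq2}). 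Consequently $\mathrm{Aut}(M_5)$ preserves each of the two triples as a \emph{type} of slope, and I can first move a special slope onto a chosen standard component and then normalise it, to $\infty$ in the first case and to $-1$ in the second.

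For the second statement this reduction finishes the argument almost immediately. Once a $-1$ occupies the central slot, the identity of Eq. (70) in \cite{excep_slopes}, recorded here as Lemma~\ref{lem:M5M4}, reads $M_5(\tfrac ab,\tfrac{c-d}d,-1,\tfrac{e-f}f,\tfrac gh)\cong M_4(\tfrac ab,\tfrac cd,\tfrac ef,\tfrac gh)$. Hence filling along the single $-1$ component already yields $M_4$, so by definition the full instruction factors through $M_4$. Transporting this back by the elements of $\Gamma$ used in the normalisation, the same conclusion holds whenever one of the five slopes equals $-1$, $\tfrac12$ or $2$.

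For the first statement I would, after the same normalisation, identify the filling explicitly and display a non-hyperbolic piece. Rewriting the special filling by means of (\ref{5CLinf}) together with its $0$- and $1$-analogues (\ref{5CL0}) and (\ref{5CL1}), and then applying the graph-manifold identities of Lemma~\ref{graph_id_lemma}, the slopes $0,1,\infty$ force a degenerate Seifert datum of shape $(1,\star)$ or a $(0,\star)$ factor in the resulting graph manifold. By Lemma~\ref{l:down} such a datum is incompatible with hyperbolicity: it produces an essential sphere or torus, so the closed manifold is reducible or toroidal and in particular non-hyperbolic. This is precisely the information encoded in the classification \cite[Theorem~4]{excep_slopes}, where $0$, $1$ and $\infty$ never appear among the exceptional slopes of a hyperbolic $M_5(\mathcal F)$ but instead mark the slopes at which $M_5(\mathcal F)$ itself ceases to be hyperbolic.

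The main obstacle is the first statement. The symmetry reduction is uniform for both triples, but while the $-1$ case collapses to a single clean homeomorphism, the $0/1/\infty$ case genuinely requires verifying non-hyperbolicity rather than merely exhibiting a shorter chain link: one must track how the degenerate Seifert invariant propagates through the graph-manifold calculus and confirm that a reducible or toroidal summand survives for every residual configuration of the remaining slopes. A secondary point to pin down is that the slope-transformation and the component relabelling in Lemma~\ref{sym} are coupled, so one must check that the stabiliser of the standard component realises enough of $\Gamma$ on that component to legitimately normalise the entire triple to one representative.
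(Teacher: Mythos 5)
The paper itself gives no proof of this lemma: it is imported verbatim from \cite[Theorem~4 and Eq.~(70)]{excep_slopes}, so any argument you supply is by construction a different route, namely a reconstruction from the other facts quoted in the Appendix. Your treatment of the second bullet is correct as written: every slope transformation occurring in (\ref{symeq1})--(\ref{symeq11}) is an element of the anharmonic group, $\{-1,\tfrac12,2\}$ is a single orbit of that group, the rotations (\ref{linksymeq1}) move any component into the third slot, and (\ref{symeq1}) itself realises both $x\mapsto x/(x-1)$ (carrying $\tfrac12$ to $-1$) and $x\mapsto 1-x$ (carrying $2$ to $-1$), which settles your ``secondary point'' about the stabiliser; after that, Lemma~\ref{lem:M5M4} --- read, as Eq.~(70) of \cite{excep_slopes} states it, as a homeomorphism of the partially filled manifold --- exhibits the sub-instruction consisting of the single slope $-1$ as one whose filling is $M_4$, which is exactly the definition of factoring through $M_4$, and this notion is invariant under $\mathrm{Aut}(M_5)$. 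What your route buys is a self-contained appendix; what the paper's citation buys is independence, since your argument recycles Lemma~\ref{sym} and Lemma~\ref{lem:M5M4}, which come from the very same reference.

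Your first bullet, however, has the mechanism wrong, even though the conclusion is one line away from the tools you invoke. The slopes $0,1,\infty$ do not ``force a degenerate Seifert datum'': in (\ref{5CLinf}), (\ref{5CL1}) and (\ref{5CL0}) they are consumed in converting $M_5$ into $F$, and the Seifert parameters of the resulting pieces come from the \emph{other} four slopes, which are generically non-degenerate. Nor can Lemma~\ref{l:down} be cited for non-hyperbolicity: it is a recognition criterion (if a graph manifold of that shape is Seifert, a lens space or $S^3$, then some coefficient is small), not an obstruction to hyperbolicity. The correct finish is simpler: rotate the special slope into the last position using (\ref{linksymeq1}) (no anharmonic normalisation is needed for this bullet, since the three identities cover $0$, $1$ and $\infty$ directly), apply (\ref{5CLinf}), (\ref{5CL1}) or (\ref{5CL0}) to identify the filled manifold with a filling of $F$, and then invoke Lemma~\ref{filling_F}: every filling of $F$ has the form $\seifdue Dabcd\bigu 0110\seifdue Defgh$, i.e.\ is a union of two Seifert pieces glued along a torus. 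Such a manifold is toroidal or, in degenerate cases, Seifert fibred, reducible or a lens space --- in no case hyperbolic. With that substitution your argument is complete.
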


As highlighted in \cite{MPR}:

\begin{lemma}\label{lem:M5M4}
The following identity holds:
\begin{equation}\label{5CL-4CL}
M_5(\tfrac ab,\tfrac cd, -1, \tfrac ef,\tfrac gh)=M_4(\tfrac ab,\tfrac{c+d}d,\tfrac{e+f}f,\tfrac gh).
\end{equation}
\end{lemma}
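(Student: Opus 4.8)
The plan is to establish the identity by a direct Kirby-calculus computation, realizing the $-1$ filling on the central component as a Rolfsen blow-down. Label the five components of 5CL as $U_1,\dots,U_5$ cyclically, so that $U_i$ clasps only $U_{i-1}$ and $U_{i+1}$, and the central component $U_3$ is an unknot bounding an embedded disc $D$ that is met transversally exactly once by each of $U_2$ and $U_4$ and is disjoint from $U_1$ and $U_5$. Filling $U_3$ along the slope $-1$ is, since $U_3$ is unknotted, exactly a blow-down of the $(-1)$-framed unknot $U_3$.

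First I would carry out the blow-down. Blowing down $U_3$ removes it and inserts a full twist along $D$ on the two strands of $U_2\cup U_4$ that pierce it. This twist fuses the broken $5$-chain into a $4$-chain: after an ambient isotopy the resulting surgery diagram is precisely the $4$-chain link 4CL, with $U_2$ and $U_4$ now occupying adjacent positions in the shorter chain and $U_1,U_5$ untouched. This is the geometric content of Lemma~\ref{lem:M5M4}; what remains is to track how the framings, hence the slopes, transform.

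Next I would compute the slope change induced by the twist. The blow-down alters the homology basis on precisely the two components $U_2,U_4$ piercing $D$: their longitudes are shifted by a meridian, $\lambda\mapsto\lambda'=\lambda-\mu$ (the sign being fixed by the $-1$ framing of $U_3$ together with the $+1$ geometric linking of each of $U_2,U_4$ with $U_3$). Re-expressing the slope $\tfrac cd=c\mu+d\lambda$ on $U_2$ in the new basis gives $c\mu+d(\lambda'+\mu)=(c+d)\mu+d\lambda'$, i.e. the slope becomes $\tfrac{c+d}d$; identically $\tfrac ef\mapsto\tfrac{e+f}f$ on $U_4$. The components $U_1,U_5$ are disjoint from $D$, so their slopes $\tfrac ab,\tfrac gh$ are unaffected. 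Combining the isotopy identification of the underlying $4$-component link with this slope bookkeeping yields exactly the stated equality.

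The main obstacle is the bookkeeping of conventions rather than any deep topology: one must match the fixed meridian--longitude bases on $M_5$ and $M_4$ used in \cite{MPR}, and in particular verify that the sign of the inserted twist produces the $+1$ shift $\tfrac cd\mapsto\tfrac{c+d}d$ (and not $\tfrac{c-d}d$), and that the blown-down diagram is the $4$-chain link in its chosen chirality, with the ``minimally twisted'' clasp pattern of 5CL reducing to the standard clasps of 4CL. Pinning these signs and framings to the conventions of \cite{MPR}, rather than to their mirror images, is the only genuinely delicate point; once the conventions are fixed, the computation above is forced.
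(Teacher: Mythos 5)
Your argument is correct, but be aware that the paper contains no proof of this lemma to compare against: the identity is simply quoted from \cite{MPR} (just as its one-step-down analogue $M_4(\tfrac ab,-1,\tfrac cd,\tfrac ef)=M_3(\tfrac ab+1,\tfrac cd+1,\tfrac ef)$ in Lemma~\ref{4CL_factor} is quoted from \cite{excep_slopes}). What you have written is the standard Kirby-calculus argument that underlies that citation. Your bookkeeping is right: blowing down the $(-1)$-framed unknotted component inserts a positive full twist on the two strands meeting its disc, each piercing component has its slope shifted by $+\mathrm{lk}^2=+1$ --- note this is independent of the signs of the clasps, since only $\mathrm{lk}^2$ enters, so the alternating clasps of 5CL cause no trouble here --- and your change of basis $\lambda'=\lambda-\mu$ is exactly equivalent to $\tfrac cd\mapsto\tfrac cd+1=\tfrac{c+d}d$. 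The one step you defer, namely that the blown-down diagram is the link 4CL of Figure~\ref{chain_links_fig} in the conventions of \cite{MPR}, is genuinely the crux and not a formality: the natural alternative, the minimally twisted 4--chain link M4CL, has exterior $F$, which is a graph manifold and in particular not homeomorphic to the hyperbolic manifold $M_4$, and clasp-sign patterns of chain links cannot be interchanged by isotopy, so asserting ``precisely 4CL'' requires the actual diagrammatic comparison with Figure~\ref{chain_links_fig}. That comparison is a finite, routine check; with it carried out, your proof is complete and is, in substance, the argument the paper's citation points to.
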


\subsection{Concerning surgery instructions on 4CL} 

From \cite{excep_slopes} we have the following identities:
\begin{gather}
M_4(\tfrac ab, \tfrac cd, \tfrac ef)(\infty)= \seiftre{S^2}abd{-c}ef, \label{4CLinf} \\
M_4(\tfrac ab, \tfrac cd, \tfrac ef)(0)= \seifdue{D}f{-e}b{2b-a} \bigu0110 \seifdue{D}21{c-2d}d, \label{4CL0}\\
M_4(\tfrac ab, \tfrac cd, \tfrac ef)(1)= \seiftre{S^2}{a-2b}b{c-d}c{e-2f}f, \label{4CL1} \\
M_4(\tfrac ab, \tfrac cd, \tfrac ef)(2)= \seifdue{D}{a-b}b{e-f}f \bigu0110 \seifdue{D}cd2{-1}. \label{4CL2}
\end{gather}

\begin{lemma}[{\cite[Theorem~5 and Eq. (69)]{excep_slopes}}]\label{4CL_factor} The following statements hold:
\begin{itemize}
\item If $(\frac ab, \frac cd, \frac ef, \frac gh)$ is an instruction on \emph{4CL} and one of the slopes is 
in $\{0,1,2,\infty\}$ then $M_4(\frac ab, \frac cd, \frac ef, \frac gh)$ is non-hyperbolic. 
\item If $(\frac ab, \frac cd, \frac ef, \frac gh)$ is an instruction on \emph{4CL} and one of the slopes is 
in $\{-1,\frac12, \frac32,3\}$ then $(\frac ab, \frac cd, \frac ef, \frac gh)$ factors through $M_3$. In particular, $M_4(\frac ab,-1,\frac cd,\frac ef)=M_3(\frac ab+1,\frac cd+1,\frac ef)$
\end{itemize}
\end{lemma}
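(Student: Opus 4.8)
The plan is to treat the two bullets separately, reducing each to a single canonical slope by means of the symmetry of the $4$-chain link (Lemma~\ref{4CL_symmetry}) and then invoking the filling formulas recorded in (\ref{4CLinf})--(\ref{4CL2}) together with the blow-down relation of Lemma~\ref{lem:M5M4}.

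For the non-hyperbolicity bullet, I would first use Lemma~\ref{4CL_symmetry}, which permutes the four components, to move the distinguished slope onto the last component, so that the filling in question is one of $M_4(\tfrac ab,\tfrac cd,\tfrac ef)(s)$ with $s\in\{0,1,2,\infty\}$. Then the identities (\ref{4CLinf}) and (\ref{4CL1}) exhibit the cases $s=\infty$ and $s=1$ as Seifert fibred spaces over $S^2$ with three exceptional fibres, while (\ref{4CL0}) and (\ref{4CL2}) exhibit the cases $s=0$ and $s=2$ as graph manifolds obtained by gluing two Seifert pieces along a torus. A closed Seifert fibred space is never hyperbolic, and a genuine graph manifold contains an essential torus; in the degenerate subcases where the graph manifold collapses (detected through Lemma~\ref{l:down} and the reductions of Lemma~\ref{graph_id_lemma}) one lands on a lens space or a Seifert fibred space, which is again non-hyperbolic. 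Either way $M_4(\tfrac ab,\tfrac cd,\tfrac ef,\tfrac gh)$ is non-hyperbolic, which proves the first bullet.

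For the factoring bullet, the key computation is the explicit identity $M_4(\tfrac ab,-1,\tfrac cd,\tfrac ef)=M_3(\tfrac ab+1,\tfrac cd+1,\tfrac ef)$, which is the $4$-chain analogue of (\ref{5CL-4CL}) and which I would establish by a Rolfsen twist: the $-1$-framed second component is an unknot bounding a disc met once by each of its two neighbours and disjoint from the remaining component, so blowing it down deletes that component, fuses the four-cycle $4$CL into the three-cycle $3$CL, and adds $+1=(\mathrm{lk})^2$ to the two neighbouring slopes while fixing the non-adjacent slope. This shows that a $-1$ slope forces the instruction to factor through $M_3$. To reach the slopes $\tfrac12,\tfrac32,3$ I would note that the stabiliser of a cusp in $\mathrm{Aut}(M_4)$ acts on slopes by Möbius transformations; pulling the generators of Lemma~\ref{sym} back along (\ref{5CL-4CL}) produces in particular the involutions $x\mapsto x/(x-1)$ (compare (\ref{symeq5})) and $x\mapsto 2-x$, and one checks directly that the orbit of $-1$ under the group they generate is exactly $\{-1,\tfrac12,\tfrac32,3\}$. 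Hence any such slope is carried by a symmetry to a $-1$ slope on some component, which factors through $M_3$ by the identity above.

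The main obstacle I anticipate lies in the bookkeeping of this last step: verifying that the relevant cusp-stabiliser really does contain $x\mapsto 2-x$, so that $-1,3$ and $\tfrac12,\tfrac32$ become linked (since $x\mapsto x/(x-1)$ alone pairs only $-1\leftrightarrow\tfrac12$ and $3\leftrightarrow\tfrac32$), and that the orbit is no larger than the stated four-element set. A secondary nuisance is keeping track of the framing conventions of the \emph{ordinary} (as opposed to minimally twisted) $4$-chain link, so that the blow-down genuinely shifts the neighbouring slopes by $+1$ rather than $-1$ and the two slope sets come out as $\{0,1,2,\infty\}$ and $\{-1,\tfrac12,\tfrac32,3\}$ rather than some translate of them.
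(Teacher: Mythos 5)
The paper offers no internal proof of this lemma: it is one of the quoted facts of the Appendix, imported directly from \cite[Theorem~5 and Eq.~(69)]{excep_slopes}. So there is no argument of the paper to compare yours against; judged on its own, your reconstruction is essentially correct, and it assembles precisely the ingredients that the paper itself uses elsewhere for other purposes: the blow-down identity (your Rolfsen-twist computation agrees in sign with Lemma~\ref{lem:M5M4}, where filling $-1$ adds $+1$ to the two adjacent slopes and fixes the others), and the $M_5$--symmetries of Lemma~\ref{sym} pulled back along (\ref{5CL-4CL}), which is exactly what Sections~\ref{lens_lens4CL_subsec} and~\ref{4cl_z} do when they display chains such as $M_4(\tfrac ab,\tfrac cd,\tfrac ef,\tfrac gh)\cong M_4(\tfrac g{g-h},\tfrac{2b-a}{b},\tfrac c{c-d},\tfrac{2f-e}{f})$.

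Two points need repair. First, your non-hyperbolicity argument only treats full instructions: the identities (\ref{4CLinf})--(\ref{4CL2}) describe \emph{closed} fillings, whereas the paper applies Lemma~\ref{4CL_factor} to partial instructions such as $M_4(\tfrac ab,\tfrac cd,\tfrac ef)$, a one-cusped manifold. The gap closes easily: every closed filling extending such an instruction is a graph manifold by your argument, and a hyperbolic cusped manifold has infinitely many hyperbolic closed fillings by Thurston's theorem, so the partial filling cannot be hyperbolic either (equivalently, $M_4(s)$ for $s\in\{0,1,2,\infty\}$ is itself a graph manifold with boundary, and graph manifolds are closed under filling). Second, the parenthetical appeal to (\ref{symeq5}) is misplaced: that symmetry applies $x\mapsto x/(x-1)$ to \emph{all five} slopes, so it sends the slope $-1$ to $\tfrac12$ and hence does not preserve the locus $M_4=M_5(-1)$; it induces no symmetry of $M_4$. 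The symmetries that do descend are (\ref{symeq3}) and $(\ref{linksymeq1})^2\circ(\ref{symeq1})$ (both fix the entry $-1$), and they act on the remaining cusps --- permuting them rather than stabilising any one --- by exactly the two maps you want, $x\mapsto 2-x$ and $x\mapsto x/(x-1)$; these are the two equivalences written out in Section~\ref{4cl_z}. Granting them, your orbit bookkeeping works: the composite $x\mapsto(2-x)/(1-x)$ is again an involution, so the two maps generate a Klein four-group whose orbit of $-1$ is $\{-1,\tfrac12,\tfrac32,3\}$; moreover for the lemma one only needs that $\tfrac12,\tfrac32,3$ \emph{lie in} this orbit, not that the orbit is no larger, so your ``secondary nuisance'' is harmless.
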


%\begin{proof}
%All the actions refer to \cite[Theorem~1.0.7]{thesis}.
%\[
%\dessin{2cm}{D3_1_1}\xrightarrow{surgery}\dessin{2cm}{D3_1_2}
%\]
%  \[
%\dessin{2cm}{D3_5_1}\xrightarrow{surgery}\dessin{2cm}{D3_5_2}\xrightarrow[\frac{c}{d}=\frac{3}{2},\frac{g}{h}=-1]{\textrm{action }1.4}\dessin{2cm}{D3_5_3}\xrightarrow{surgery}\dessin{2cm}{D3_5_4}
%\]
%  \[
%\dessin{2cm}{D3_a_1}\xrightarrow{surgery}\dessin{2cm}{D3_a_2}\xrightarrow[\frac{a}{b}=-1,\frac{c}{d}=-\frac{1}{2}]{\textrm{action }1.12}\dessin{2cm}{D3_a_3}\xrightarrow{surgery}\dessin{2cm}{D3_a_4}
%\]
%  \[
%\dessin{2cm}{D3_3_1}\xrightarrow{surgery}\dessin{2cm}{D3_3_2}\xrightarrow[\frac{e}{f}=-1,\frac{g}{h}=2]{\textrm{action }1.5}\dessin{2cm}{D3_3_3}\xrightarrow{surgery}\dessin{2cm}{D3_3_4}
%\]
%\end{proof}

%The action induced from the link symmetry group of 4CL tells us: 

\begin{lemma}\label{4CL_symmetry} For a filling instruction $(\alpha_1,\alpha_2,\alpha_3,\alpha_4)$ on 
$M_4$ we have
$M_4(\alpha_1,\alpha_2,\alpha_3,\alpha_4)=M_4(\alpha_{\sigma(1)},\alpha_{\sigma(2)},\alpha_{\sigma(3)},\alpha_{\sigma(4)})$
for every $\sigma\in D_4$.
% \[
% M_4(\tfrac ab, \tfrac cd, \tfrac ef, \tfrac gh) = M_4(\tfrac ef, \tfrac cd, \tfrac ab, \tfrac gh)
% \qquad \text{ and }\qquad 
% M_4(\tfrac ab, \tfrac cd, \tfrac ef, \tfrac gh) = M_4(\tfrac gh, \tfrac ab, \tfrac cd, \tfrac ef).
% \]
\end{lemma}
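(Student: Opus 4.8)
The plan is to deduce the equality from a geometric $D_4$--symmetry of the $4$--chain link itself, realised by diffeomorphisms of the pair $(S^3,\mathrm{4CL})$. First I would put $\mathrm{4CL}$ in its standard ``necklace'' position, with its four unknotted components $C_1,\dots,C_4$ arranged cyclically and each clasping its two neighbours in an identical fashion. In this position two rigid motions of $S^3$ preserve the link setwise: the rotation $\rho$ by $\pi/2$ about the central axis, which sends $C_i$ to $C_{i+1}$, and a rotation $\iota$ by $\pi$ about an axis in the plane of the necklace, which reverses the cyclic order of the components. Both are orientation--preserving on $S^3$, and together they generate a group acting on $\{1,2,3,4\}$ as the full dihedral group $D_4$; this is the standard symmetry of a chain link already invoked implicitly throughout \cite{MPR,excep_slopes,Magic}, and it realises every $\sigma\in D_4$ as the permutation induced by a diffeomorphism $\phi_\sigma$ of $(S^3,\mathrm{4CL})$.

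Next I would restrict each $\phi_\sigma$ to the exterior $M_4=S^3\setminus\nu(\mathrm{4CL})$, obtaining an orientation--preserving self--homeomorphism that carries the boundary torus $T_i$ to $T_{\sigma(i)}$. The crux is to check that $\phi_\sigma$ respects the chosen meridian--longitude bases, so that a slope recorded as $p/q\in\Q\cup\{\infty\}$ on $T_i$ is sent to the slope $p/q$ on $T_{\sigma(i)}$. Since $\phi_\sigma$ is induced by a homeomorphism of $S^3$ permuting the tubular neighbourhoods, it sends the meridian $\mu_i$ (the boundary of a meridian disc of $\nu(C_i)$) to $\mu_{\sigma(i)}$, and it sends the preferred longitude $\lambda_i$ (the $0$--framing, which is intrinsic to a knot in $S^3$) to $\lambda_{\sigma(i)}$, each up to sign. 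Because $\phi_\sigma$ preserves the orientation of $M_4$ and hence the boundary orientation of $\partial M_4$, its restriction $T_i\to T_{\sigma(i)}$ preserves the algebraic intersection $\mu\cdot\lambda$; writing $\mu_i\mapsto\epsilon_1\mu_{\sigma(i)}$ and $\lambda_i\mapsto\epsilon_2\lambda_{\sigma(i)}$ forces $\epsilon_1\epsilon_2=+1$, so $\mu_i$ and $\lambda_i$ flip sign simultaneously, if at all. As slopes are unoriented, $p\mu_i+q\lambda_i$ and $\epsilon_1(p\mu_{\sigma(i)}+q\lambda_{\sigma(i)})$ determine the same element $p/q$.

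Finally I would glue up: since $\phi_\sigma$ matches the slope on $T_i$ with the identical slope on $T_{\sigma(i)}$, it extends over the filling solid tori to a homeomorphism $M_4(\alpha_1,\dots,\alpha_4)\cong M_4(\alpha_{\sigma^{-1}(1)},\dots,\alpha_{\sigma^{-1}(4)})$; letting $\sigma$ run over the whole group $D_4$ (so that $\sigma^{-1}$ does too) yields the stated equality for every permutation in $D_4$. The step I expect to require the most care is the framing bookkeeping of the middle paragraph: one must verify that the meridian--longitude conventions fixed in \cite{excep_slopes,MPR} are chosen compatibly with the cyclic symmetry of the chain, so that no spurious sign or index shift is introduced at a clasp as $\phi_\sigma$ moves one component to another. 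It is precisely this compatibility that permits the \emph{unoriented} slope labels to be transported unchanged, and once it is confirmed the lemma is an immediate consequence of the symmetry of the underlying link.
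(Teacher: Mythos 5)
Your proof is correct and coincides with the justification the paper implicitly relies on: Lemma \ref{4CL_symmetry} is stated in the appendix without any written proof, being regarded as an immediate consequence of the evident dihedral symmetry of the 4--chain link. Your write-up makes exactly this explicit --- the $D_4$ symmetry of 4CL (whose clasps all have the same sign, so both the one-step rotation and the flip are realised by orientation-preserving homeomorphisms of the pair $(S^3,\text{4CL})$) preserves the canonical meridian--longitude framings up to simultaneous sign, hence carries unoriented slopes to identical unoriented slopes and extends over the filling solid tori.
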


\subsection{Concerning surgery instructions on 3CL} 

\begin{lemma}\label{3CL_sym}
If $\sigma\in S_3$ and $(\alpha_1,\alpha_2,\alpha_3)$ is a filling instruction on $N$ 
then 
\[
N(\alpha_1,\alpha_2,\alpha_3)=N(\alpha_{\sigma(1)},\alpha_{\sigma(2)},\alpha_{\sigma(3)}).
\]
\end{lemma}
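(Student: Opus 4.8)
The plan is to realize every $\sigma\in S_3$ by a self-homeomorphism $h_\sigma$ of $N$ that permutes the three cusps according to $\sigma$ and carries the chosen homology basis $(\mu_i,\lambda_i)$ on the $i$-th cusp to the basis on the $\sigma(i)$-th cusp. Once such an $h_\sigma$ is in hand, gluing a solid torus along the slope $\alpha_i$ on the $i$-th cusp is intertwined by $h_\sigma$ with gluing along $\alpha_i$ on the $\sigma(i)$-th cusp, so $h_\sigma$ descends to a homeomorphism $N(\alpha_1,\alpha_2,\alpha_3)\to N(\alpha_{\sigma^{-1}(1)},\alpha_{\sigma^{-1}(2)},\alpha_{\sigma^{-1}(3)})$, which is the asserted identity after replacing $\sigma$ by $\sigma^{-1}$. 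Since $S_3$ is generated by a $3$-cycle together with any transposition, it suffices to produce $h$ for these two elements.

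First I would read the required symmetries off the standard picture of $\mathrm{3CL}^{\ast}$ as a symmetric necklace of three rings. Rotation by $2\pi/3$ about the central axis is an orientation-preserving symmetry of $(S^3,\mathrm{3CL}^{\ast})$ cyclically permuting the rings; this is precisely the rotational symmetry already used in Section~\ref{notation_sec} to fix the cyclic ordering of the components, and it yields a $3$-cycle. Rotation by $\pi$ about an axis lying in the plane of the necklace and passing through one ring is again orientation-preserving, fixes that ring, and interchanges the other two, yielding a transposition. Both are ambient symmetries of $(S^3,\mathrm{3CL}^{\ast})$ and hence restrict to orientation-preserving self-homeomorphisms of the exterior $N$.

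The one point that genuinely needs care, and which I expect to be the main obstacle, is that these maps preserve \emph{slopes}: one needs $h_*(\mu_i,\lambda_i)=(\mu_{\sigma(i)},\lambda_{\sigma(i)})$ up to an overall sign, rather than up to a more general element of $\mathrm{GL}_2(\Z)$. Meridians bound discs in a tubular neighbourhood, so they are canonical up to sign and $h_*\mu_i=\pm\mu_{\sigma(i)}$; and because the framings fixing the $\lambda_i$ in \cite{MPR,Magic} are chosen symmetrically with respect to the necklace symmetries, also $h_*\lambda_i=\pm\lambda_{\sigma(i)}$. Thus, in the bases identifying the $i$-th cusp with the $\sigma(i)$-th cusp, $h_*$ is diagonal, equal to $\mathrm{diag}(\varepsilon,\varepsilon')$ with $\varepsilon,\varepsilon'\in\{\pm1\}$. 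Finally, as $h$ is orientation-preserving on $N$ and carries boundary tori to boundary tori while preserving the inward normal, it is orientation-preserving on each relevant torus, so $\det h_*=\varepsilon\varepsilon'=+1$, forcing $\varepsilon=\varepsilon'$ and hence $h_*=\pm\mathrm{Id}$. Therefore the \emph{unoriented} slope $\alpha\in\Q\cup\{\infty\}$ on the $i$-th cusp is sent to the same slope on the $\sigma(i)$-th cusp, so composing the $3$-cycle with the transposition realizes all of $S_3$ and proves the lemma. As a consistency check one may instead obtain a transposition from the toolkit already developed: writing $N=M_3$ as an iterated $(-1)$-filling of $M_5$ via Lemmata~\ref{lem:M5M4} and \ref{4CL_factor}, an element of $\mathrm{Aut}(M_5)$ from Lemma~\ref{sym} that fixes the filled slots and swaps two of the remaining ones descends to the desired symmetry of $N$, and the residual $D_4$-symmetry of $M_4$ in Lemma~\ref{4CL_symmetry} likewise contributes one.
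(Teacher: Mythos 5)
Your proposal is correct, but be aware that the paper itself contains no proof of Lemma~\ref{3CL_sym}: it is stated bare in the appendix as a known fact, implicitly inherited from \cite{Magic}, where filling instructions on the magic manifold are treated as unordered triples throughout. So you are supplying a justification where the paper supplies none, and the one you give is the standard geometric one: realize $D_3=S_3$ by ambient symmetries of $(S^3,\mathrm{3CL}^{\ast})$ --- the $2\pi/3$ rotation about the necklace axis and the $\pi$ rotation about an in-plane axis through one component --- and then check that slopes, not just cusps, are matched up. Your treatment of the slope issue is the essential content and it is sound: meridians are canonical up to sign, longitudes likewise, and the determinant argument (an orientation-preserving homeomorphism of $N$ restricts to orientation-preserving maps between boundary tori, forcing $h_\ast=\pm\mathrm{Id}$ in the fixed bases) correctly excludes the possibility $p/q\mapsto -p/q$. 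Two points could be tightened. First, you do not need the framings to be ``chosen symmetrically'': the longitudes used in \cite{MPR,Magic} are the homological (linking-number-zero) longitudes, which are canonical up to sign for purely homological reasons, so \emph{any} self-homeomorphism of the pair carries them to one another; this removes all dependence on how the diagram is drawn. Second, the assertion that the $\pi$-rotation preserves the link, and not merely its shadow, deserves a one-line verification, since rotating a diagram about an axis lying in the projection plane switches every crossing as seen in the projection; for the symmetric alternating six-crossing diagram of the $3$-chain link the reflected-and-switched diagram does coincide with the original (the two crossings between the two swapped components sit on the rotation axis and are each preserved, while the four crossings involving the fixed component are exchanged in over/under-compatible pairs), so the symmetry is genuine, but as written you assert this rather than check it. The closing ``consistency check'' via $\mathrm{Aut}(M_5)$ and Lemmata~\ref{sym}, \ref{lem:M5M4} and \ref{4CL_factor} is dispensable and would need care (one must track which symmetries fix the filled slots), but since it is offered only as a cross-check it does not affect the validity of the main argument.
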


\begin{lemma}\label{M3vsN}
For all filling instructions it holds $M_3(\frac ab,\frac cd,\frac ef)=N(-\frac ab,-\frac cd,-\frac ef)$.
\end{lemma}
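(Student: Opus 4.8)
The plan is to exploit the fact that, by definition, $\mathrm{3CL}^{\ast}$ is the mirror image of $\mathrm{3CL}$, and simply to keep track of what a reflection does to slopes. First I would fix an orientation-reversing self-homeomorphism $r\colon S^3\to S^3$ realising the mirror symmetry, chosen to respect the cyclic labelling of the components, so that $r$ carries the $i$-th component of $\mathrm{3CL}$ to the $i$-th component of $\mathrm{3CL}^{\ast}$. Taking the regular neighbourhoods compatibly, $r$ maps $\nu(\mathrm{3CL})$ onto $\nu(\mathrm{3CL}^{\ast})$ and therefore restricts to a homeomorphism of exteriors $r\colon M_3=S^3\setminus\nu(\mathrm{3CL})\to S^3\setminus\nu(\mathrm{3CL}^{\ast})=N$.

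The crux is to compute the induced map on the slopes of each boundary torus $T$. On $T$ the meridian $\mu$ is characterised as the curve bounding a disc in the removed solid torus, and the preferred longitude $\lambda$ as the $0$-framed (Seifert-framed) longitude, i.e.\ the one bounding a Seifert surface for that component. Since $r$ is a homeomorphism of the pair $(S^3,\mathrm{3CL})$ onto $(S^3,\mathrm{3CL}^{\ast})$ it carries meridian discs to meridian discs and Seifert surfaces to Seifert surfaces, so it sends $\mu$ and $\lambda$ to the meridian and preferred longitude of the corresponding component of $\mathrm{3CL}^{\ast}$ as \emph{unoriented} curves. Only the sign remains: because $r$ reverses the orientation of $S^3$, it reverses the induced orientation of $T$, hence reverses the algebraic intersection form $\mu\cdot\lambda$. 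After normalising orientations so that $r$ preserves the meridian, the longitude direction must flip, so $r$ sends $p\mu+q\lambda$ to $p\mu-q\lambda$; that is, it sends the slope $\tfrac pq$ to $-\tfrac pq$.

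Finally I would extend $r$ across the fillings: transporting by $r$ the solid torus that defines the $\alpha_i$-filling of $M_3$ produces exactly the solid torus defining the $(-\alpha_i)$-filling of $N$, so $r$ extends to a homeomorphism $M_3(\tfrac ab,\tfrac cd,\tfrac ef)\to N(-\tfrac ab,-\tfrac cd,-\tfrac ef)$, which is the asserted identity. The resulting homeomorphism is orientation-reversing, but throughout the paper fillings are only ever compared up to homeomorphism, so this entails no loss. The one genuinely delicate point is the sign bookkeeping of the second paragraph—checking that the reflection negates each slope rather than fixing it—which rests on the observation that an orientation-reversing homeomorphism of $S^3$ restricts to an orientation-reversing homeomorphism of each boundary torus and so cannot preserve both basis curves at once.
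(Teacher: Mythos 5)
Your proof is correct. The paper states this lemma in the appendix without any proof, treating it as the standard fact that mirroring a link negates surgery coefficients; your argument --- restricting an orientation-reversing homeomorphism of $S^3$ to the exteriors, checking that meridians and $0$-framed longitudes are carried to their counterparts while the boundary intersection form (hence the sign of each slope $\tfrac pq$) is reversed, and extending over the filling solid tori, with the resulting homeomorphism $M_3(\tfrac ab,\tfrac cd,\tfrac ef)\to N(-\tfrac ab,-\tfrac cd,-\tfrac ef)$ being orientation-reversing, which is harmless since fillings are compared only up to homeomorphism --- is precisely the standard justification the paper implicitly relies on.
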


\begin{lemma}[{\cite[Theorem~1.2]{Magic}}]\label{N_factor}
If $(\frac ab, \frac cd)$ is an instruction on $N$ and one of the slopes is 
$\{0,-1,-2,-3,\infty\}$ then $N(\frac ab, \frac cd)$ is non-hyperbolic. 
\end{lemma}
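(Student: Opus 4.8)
The plan is to reduce the two–cusp assertion of the lemma to a full–filling statement on $N$, then to transport that statement onto $M_4$, where Lemma~\ref{4CL_factor} already isolates $\{0,1,2,\infty\}$ as the non-hyperbolic slopes. First I would record the dictionary between the pictures: by Lemma~\ref{M3vsN} we have $N(\beta_1,\beta_2,\beta_3)=M_3(-\beta_1,-\beta_2,-\beta_3)$, while the second part of Lemma~\ref{4CL_factor} yields the reduction $M_3(x,y,z)=M_4(x-1,-1,y-1,z)$. Composing these, every \emph{full} filling of $N$ is a full filling of $M_4$ (the $M_3\to M_4$ passage simply reinstates the extra cusp, filled along $-1$), so the non-hyperbolicity criterion of Lemma~\ref{4CL_factor} is available.

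Next I would prove the full–filling version: if some $\alpha_i\in\{0,1,2,3,\infty\}$ then $M_3(\alpha_1,\alpha_2,\alpha_3)$ is non-hyperbolic. The $S_3$–symmetry of Lemma~\ref{3CL_sym} lets me move the distinguished slope into whichever coordinate is convenient. If $\alpha_i\in\{1,2,3,\infty\}$, I place it first, so that $M_3(\alpha_1,\alpha_2,\alpha_3)=M_4(\alpha_1-1,-1,\alpha_2-1,\alpha_3)$ has its leading $M_4$–slope $\alpha_1-1\in\{0,1,2,\infty\}$; the first part of Lemma~\ref{4CL_factor} then makes this $M_4$–filling non-hyperbolic. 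Reading this back through Lemma~\ref{M3vsN} shows that every full filling $N(\beta_1,\beta_2,\beta_3)$ with some $\beta_i\in\{0,-1,-2,-3,\infty\}$ is non-hyperbolic.

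The case $\alpha_i=0$ is the one that resists this scheme, and it is the main obstacle: putting $0$ in the first coordinate produces the $M_4$–slope $-1$, and Lemma~\ref{4CL_factor} merely sends that slope back to $M_3$, so the reduction loops. I would resolve this by using the symmetry differently, moving the $0$ into the \emph{third} coordinate, which the reduction transcribes with no shift: $M_3(\alpha_1,\alpha_2,0)=M_4(\alpha_1-1,-1,\alpha_2-1,0)$ now contains the $M_4$–slope $0\in\{0,1,2,\infty\}$, so Lemma~\ref{4CL_factor} again applies. This completes the full–filling criterion for all of $\{0,1,2,3,\infty\}$ (equivalently, all of $\{0,-1,-2,-3,\infty\}$ on $N$).

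Finally I would upgrade the full–filling statement to the lemma as stated. Set $W=N(\tfrac ab,\tfrac cd)$ with $\tfrac ab\in\{0,-1,-2,-3,\infty\}$; this is a one–cusped manifold. Every Dehn filling $W(\delta)=N(\tfrac ab,\tfrac cd,\delta)$ is a full filling of $N$ still carrying the distinguished slope $\tfrac ab$, hence non-hyperbolic by the previous step. If $W$ were hyperbolic, Thurston's hyperbolic Dehn surgery theorem \cite{b:th} would force all but finitely many $W(\delta)$ to be hyperbolic, a contradiction. Therefore $W$ is non-hyperbolic, which is exactly the assertion of the lemma.
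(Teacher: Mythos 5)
Your proof is correct, but it takes a genuinely different route from the paper: the paper gives no argument at all for this lemma, importing it wholesale as \cite[Theorem~1.2]{Magic}, where Martelli and Petronio classify the non-hyperbolic fillings of the magic manifold. You instead assemble the statement from the appendix toolkit: the mirror dictionary of Lemma~\ref{M3vsN}, the symmetry of Lemma~\ref{3CL_sym}, and the reduction $M_3(x,y,z)=M_4(x-1,-1,y-1,z)$ built into Lemma~\ref{4CL_factor}, whose first bullet then detects the non-hyperbolic full fillings since $\{1,2,3,\infty\}$ shifts into $\{0,1,2,\infty\}$. Your handling of the one recalcitrant slope $0$ is the right move: placed first it would become the $M_4$--slope $-1$, which Lemma~\ref{4CL_factor} merely sends back to $M_3$, so you park it in the third coordinate, which the reduction leaves unshifted. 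The final upgrade is also sound: every full filling $N(\frac ab,\frac cd,\delta)$ carries the distinguished slope and is therefore non-hyperbolic, so Thurston's hyperbolic Dehn surgery theorem \cite{b:th} forbids the one-cusped $N(\frac ab,\frac cd)$ from being hyperbolic. It is a careful touch that you invoke Lemma~\ref{4CL_factor} only for full instructions, as it is literally stated, and route the partial-filling conclusion through Thurston rather than assuming the lemma applies to partial instructions (as the body of the paper implicitly does elsewhere). As for what each approach buys: the paper's citation is instantaneous and, crucially, identifies the exceptional fillings of $N$ as explicit graph manifolds --- information the paper exploits in its tables --- whereas your argument establishes only non-hyperbolicity; conversely, yours is self-contained relative to the appendix, though not more elementary in absolute terms, since Lemma~\ref{4CL_factor} is itself imported from \cite{excep_slopes}, so the external dependence is shifted rather than removed.
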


% The following Lemma is contained in the statement of \cite[Corollary~A.6]{Magic}.

% \begin{lemma}\label{N_factor} The following statements hold:
% \begin{itemize}
% \item If $(\frac ab, \frac cd, \frac ef)$ is an instruction on $N$ and one of the slopes is 
% $\{0,-1,-2,-3,\infty\}$ then $N(\frac ab, \frac cd, \frac ef)$ is non-hyperbolic. 
% \item If $(\frac ab, \frac cd, \frac ef)$ is an instruction on $N$ and one of the slopes is 
% $\{-4, -\frac52, -\frac32, -1, -\frac 12,1\}$ then $N(\frac ab, \frac cd, \frac ef)$ is found in 
% Tables A.2--A.8 of \cite{Magic}.
% \end{itemize}
% \end{lemma}

\subsection{Concerning surgery instructions on M4CL}\label{m4cl_ref}

\cite[Proposition~2.1]{excep_slopes} gives us a complete enumeration of the 
Dehn fillings on $F$, the exterior of the minimally twisted 4 chain link. 
We have: 

\begin{lemma}\label{filling_F}
For slopes $\frac ab$, $\frac cd$, $\frac ef$, $\frac gh$ on \emph{M4CL} the following 
identity holds:
\begin{equation}\label{fillingF}
F(\tfrac ab, \tfrac ef, \tfrac cd, \tfrac gh)= \seifdue Dabcd\bigu 0110\seifdue Defgh
\end{equation} 
\end{lemma}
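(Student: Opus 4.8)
The plan is to produce the right-hand side of \eqref{fillingF} geometrically, by cutting the filled manifold along the genus-one Heegaard torus of $S^3$. First I would place the minimally twisted chain link M4CL so that its four components $K_1,\dots ,K_4$ lie symmetrically across the torus $T=\partial V_1=\partial V_2$ separating the two Heegaard solid tori $V_1,V_2$ of $S^3$, with $K_1,K_3$ realised as two parallel regular fibres of the circle fibration of $V_1=D^2\times S^1$ and $K_2,K_4$ as two parallel regular fibres of $V_2$. This placement reproduces the chain linking pattern (consecutive components link once, opposite components are unlinked), because a fibre of $V_1$ is a longitude that each fibre of $V_2$ meets exactly once; the minimal-twist convention is precisely what fixes the framings so that this symmetric model is the link at hand.

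Next I would treat the two sides independently. Deleting $\nu(K_1\cup K_3)$ from $V_1$ gives a pair of pants times $S^1$, i.e.\ a Seifert piece whose base is a disc with two punctures and whose three boundary tori are $T$ and the two meridional tori of $K_1,K_3$. Dehn filling $K_1$ along $\tfrac ab$ and $K_3$ along $\tfrac cd$ collapses the two punctures to cone points carrying the Seifert invariants $(a,b)$ and $(c,d)$, so this side becomes $\seifdue Dabcd$ with single boundary $T$. The mirror computation on $V_2$, filling $K_2$ along $\tfrac ef$ and $K_4$ along $\tfrac gh$, yields $\seifdue Defgh$. Hence $F(\tfrac ab,\tfrac ef,\tfrac cd,\tfrac gh)$ is the union of these two Seifert fibred spaces along $T$, as claimed; the ordering of the slopes in the statement is dictated by which component lands in which solid torus.

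It remains to identify the gluing homeomorphism on $T$ in the fixed bases $(\mu_i,\lambda_i)$ of the two Seifert pieces. For this I would use that the splitting $S^3=V_1\cup V_2$ is the standard one, so its attaching map interchanges meridian and longitude of the two solid tori, together with the observation that on each $\partial V_i$ the fibre direction is the meridian of $V_i$ while the Seifert section direction is its longitude. Chasing these identifications shows that the fibre of one side is sent to the section of the other, and vice versa, so the gluing acts on $H_1(T)$ by $\matr0110$ --- precisely the matrix appearing in the gluing symbol of \eqref{fillingF}.

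I expect the genuine work to be concentrated in this last step: verifying, with the orientation and framing conventions fixed for the bases $(\mu_i,\lambda_i)$ and for the minimal twist, that the matrix is exactly $\matr0110$ rather than a shear of it, and confirming that the two removed curves really are unknotted, unlinked, parallel fibres, so that no extra exceptional fibre or spurious twisting is introduced. All of this is carried out in \cite[Proposition~2.1]{excep_slopes}, which one may alternatively cite directly.
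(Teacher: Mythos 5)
You should first note that the paper never actually proves this lemma: it sits in the appendix of imported facts, and its entire justification is the sentence immediately preceding it, which cites \cite[Proposition~2.1]{excep_slopes}. Your proposal therefore does something genuinely different --- it reconstructs the geometry behind that citation --- and the reconstruction is sound in outline: M4CL can be arranged with $K_1,K_3$ as fibres of one Heegaard solid torus and $K_2,K_4$ as fibres of the other; each side's exterior is a pair of pants times $S^1$, whose two fillings yield $\seifdue Dabcd$ and $\seifdue Defgh$; and the Heegaard gluing exchanges meridian and longitude, hence fibre and section, giving the swap matrix $\matr 0110$ (whose determinant $-1$ is consistent with an orientation-reversing identification of the two boundary tori). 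This also explains the otherwise curious pairing in the statement of the first and third slopes (resp.\ the second and fourth) into the same Seifert piece. The one place where your write-up falls short of a proof is the opening identification: the chain linking pattern cannot by itself ``fix'' the symmetric model, because the untwisted link 4CL has exactly the same pairwise linking data (consecutive components link once, opposite ones are unlinked) and yet its exterior $M_4$ is hyperbolic, hence admits no such graph-manifold splitting at all; so one genuinely must exhibit the isotopy of the minimally twisted link into the fibred position, and likewise chase the framing and orientation conventions to rule out a sign- or shear-modified gluing matrix. Since you flag precisely these two verifications and fall back on \cite[Proposition~2.1]{excep_slopes}, where they are carried out, your argument is acceptable, and indeed is no less complete than the paper's own treatment. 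What your sketch buys over the paper's bare citation is an explanation of why the formula has the shape it does; what the citation (the paper's route) buys is the isotopy and sign bookkeeping that you defer.
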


\begin{lemma}\label{D4-F}
  For a filling instruction $(\alpha_1,\alpha_2,\alpha_3,\alpha_4)$ on 
$F$ we have
$F(\alpha_1,\alpha_2,\alpha_3,\alpha_4)=F(\alpha_{\sigma(1)},\alpha_{\sigma(2)},\alpha_{\sigma(3)},\alpha_{\sigma(4)})$
for every $\sigma\in D_4$.
\end{lemma}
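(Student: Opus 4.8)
The plan is to realize $D_4$ geometrically as self-homeomorphisms of $F$ induced by the ambient symmetries of the minimally twisted $4$-chain link, and then to check that these respect the peripheral framings fixed in \cite{excep_slopes}, so that a filling instruction is merely relabelled. As drawn in Figure~\ref{chain_links_fig}, M4CL is a cyclically arranged chain of four components admitting the order-$8$ dihedral symmetry group $D_4\subset\mathrm{SO}(3)$ generated by the quarter-turn $\rho$ that cyclically permutes the four rings and a half-turn $\tau$ about an axis of the configuration that reverses their cyclic order. Both are orientation-preserving symmetries of the pair $(S^3,\mathrm{M4CL})$, the twisting having been chosen symmetrically so as to break neither the rotational nor the reflective symmetry; this is entirely parallel to the situation for the ordinary $4$-chain link behind Lemma~\ref{4CL_symmetry}. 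Each element accordingly restricts to a self-homeomorphism of $F=S^3\setminus\nu(\mathrm{M4CL})$ inducing on the boundary tori the corresponding permutation $\sigma\in D_4$.

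First I would note that each meridian $\mu_i$ is sent to $\mu_{\sigma(i)}$ automatically, a meridian being intrinsically characterised. The substance of the proof is to verify that the ordered basis $(\mu_i,\lambda_i)$ is carried to $(\mu_{\sigma(i)},\lambda_{\sigma(i)})$ up to a simultaneous sign, so that unoriented slopes are preserved rather than negated. Granting this, filling along $\alpha_i$ on the $i$-th torus agrees with filling along $\alpha_i$ on the $\sigma(i)$-th torus, and hence
\[
F(\alpha_1,\alpha_2,\alpha_3,\alpha_4)=F(\alpha_{\sigma(1)},\alpha_{\sigma(2)},\alpha_{\sigma(3)},\alpha_{\sigma(4)})
\]
for every $\sigma\in D_4$.

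The main obstacle is precisely this framing compatibility, most delicately for the reflection $\tau$, where one must rule out a relative sign between $\mu_{\sigma(i)}$ and $\lambda_{\sigma(i)}$ (which would send slopes $\alpha\mapsto-\alpha$). I would settle it by cross-checking against the closed-filling identity~(\ref{fillingF}): its right-hand side $\seifdue Dabcd\bigu 0110 \seifdue Defgh$ is invariant under interchanging the two Seifert pieces and under transposing the two exceptional fibres within each piece, by Lemma~\ref{graph_id_lemma} (using in particular~(\ref{graph_eq1}) and the fact that the gluing matrix $\begin{pmatrix}0&1\\1&0\end{pmatrix}$ is an involution). These transpositions are the permutations $(1\,3)$, $(2\,4)$ and $(1\,2)(3\,4)$ of the four filling slots, and a quick check shows they generate all of $D_4$. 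Since~(\ref{fillingF}) records exactly how the fixed framings enter every closed filling, this invariance pins the framing convention down and shows that no $\sigma\in D_4$, in particular $\tau$, negates a slope. The argument then proceeds verbatim as for Lemma~\ref{4CL_symmetry}.
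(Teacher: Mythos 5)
The paper never actually proves Lemma~\ref{D4-F}: it is listed in the appendix among the facts imported from \cite{excep_slopes} (alongside Lemma~\ref{filling_F}), so your argument has to stand on its own, and its two halves fare very differently. The half that works is the one you present as a mere ``cross-check''. Identity~(\ref{fillingF}) writes every closed filling of $F$ as $\seifdue Dabcd\bigu 0110\seifdue Defgh$, with slots $1,3$ feeding one Seifert piece and slots $2,4$ the other; the standard facts that the two exceptional fibres of a Seifert piece may be permuted and that $X\cup_{B}Y=Y\cup_{B^{-1}}X$, with $B=\left(\begin{smallmatrix}0&1\\1&0\end{smallmatrix}\right)=B^{-1}$ here, show this expression is invariant under the slot permutations $(1\,3)$, $(2\,4)$ and $(1\,2)(3\,4)$, and these do generate $D_4$. (These facts are from \cite{FomMat}; note that~(\ref{graph_eq1}) itself only covers a piece with a $(1,b)$-fibre, so Lemma~\ref{graph_id_lemma} is not quite the right citation.) That computation is already a complete proof of the lemma for instructions filling all four cusps, which is the only way the lemma is used in the paper. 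But note the logical inversion in your write-up: this computation \emph{is} the proof; it cannot serve to certify that a separately constructed homeomorphism ``does not negate slopes'', since knowing that all closed fillings agree under a permutation does not by itself determine whether a given homeomorphism preserves or negates slopes.

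The genuinely flawed half is the geometric opening, and the flaw matters for the part of the statement your computation does not reach. $F$ is not hyperbolic: by~(\ref{fillingF}) it is the graph manifold obtained from two copies of $P\times S^1$ ($P$ a pair of pants) glued along one boundary torus of each via the involution $B$, and M4CL differs from 4CL precisely in that its clasps alternate between two types around the chain. A rigid quarter-turn carries each clasp onto an adjacent one, hence carries clasps of one type onto clasps of the other: unlike for 4CL, it is \emph{not} a symmetry of the pair $(S^3,\mathrm{M4CL})$. The minimal twisting is exactly what breaks the rotational symmetry (and hyperbolicity), so the order-$8$ subgroup of $\mathrm{SO}(3)$ you start from does not exist as claimed, and the situation is not ``entirely parallel'' to Lemma~\ref{4CL_symmetry}. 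This kills your argument for filling instructions with empty slots, which the statement (and the paper's definition of a filling instruction) allows. The repair is to build the homeomorphisms on $F$ itself from its splitting: an orientation-preserving homeomorphism of $P$ swapping its two free boundary circles, crossed with $\mathrm{id}_{S^1}$, realizes $(1\,3)$ and $(2\,4)$ while preserving the fibre--base framings; since $B^2=\mathrm{id}$, the two pieces can be swapped, realizing $(1\,2)(3\,4)$; composites give all of $D_4$, preserve the framings entering~(\ref{fillingF}), and therefore permute arbitrary filling instructions, empty slots included. As a check on sharpness, any self-homeomorphism of $F$ preserves its JSJ torus and hence the partition $\{\{1,3\},\{2,4\}\}$ of the cusps, so $D_4$ is also the largest subgroup of $S_4$ for which the lemma could hold.
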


In fact, ``most" exceptional fillings of $M_5$ are obtained by filling $F$
(c.f.\ \cite[Proposition~3.1]{excep_slopes}). 

\begin{lemma}\label{5CL-F}
The following identities hold:
\begin{gather}
M_5(\tfrac ab, \tfrac cd, \tfrac ef, \tfrac gh)(\infty)= F(-\tfrac ab, \tfrac fe, \tfrac dc, -\tfrac gh) \label{5CLinf} \\
M_5(\tfrac ab, \tfrac cd, \tfrac ef, \tfrac gh)(1)= F(\tfrac{a-b}b, \tfrac cd, \tfrac ef, \tfrac{g-h}h) \label{5CL1} \\
M_5(\tfrac ab, \tfrac cd, \tfrac ef, \tfrac gh)(0)= F(\tfrac{b}{b-a},\tfrac{c-d}c,-\tfrac hg, \tfrac{e-f}f)
\label{5CL0}
\end{gather}
\end{lemma}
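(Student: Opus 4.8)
The three displayed equalities should be read as a single geometric statement: filling the fifth cusp of $M_5$ with the slope $\infty$ (respectively $1$, $0$) produces a four--cusped manifold homeomorphic to $F$, and the slope transformations merely record the change of meridian--longitude basis that this homeomorphism induces on each of the four surviving cusps. The plan is therefore to prove all three identities by one method, namely Kirby calculus on an explicit surgery diagram of the minimally twisted $5$--chain link, handling the three fillings of the fifth component one at a time, and in each case reading off the resulting $\mathrm{GL}_2(\Z)$ change of basis on every remaining cusp.

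First I would treat \eqref{5CLinf}, the foundational case. Here $\infty$ is the meridian, so the filling is trivial and simply deletes the fifth component; after an isotopy the four remaining components are recognised as the minimally twisted $4$--chain link, so the underlying manifold is $F$. Reading the induced bases shows that the two cusps adjacent to the deleted component acquire a sign change, $\tfrac ab\mapsto-\tfrac ab$ and $\tfrac gh\mapsto-\tfrac gh$, while the two opposite cusps have meridian and longitude interchanged, $\tfrac cd\mapsto\tfrac dc$ and $\tfrac ef\mapsto\tfrac fe$, and are transposed; this is exactly the right--hand side of \eqref{5CLinf}.

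For \eqref{5CL1} and \eqref{5CL0} the fifth component carries an integral framing ($1$ or $0$), so it is a $\pm1$--framed unknot that I would remove by a Rolfsen twist (blow--down). This again yields the minimally twisted $4$--chain link, but the full twist introduced by the blow--down shears the bases on the two cusps adjacent to the deleted component, producing the maps $\tfrac ab\mapsto\tfrac{a-b}b$ and $\tfrac gh\mapsto\tfrac{g-h}h$ in \eqref{5CL1} and the mixed shear/transposition/sign pattern in \eqref{5CL0}. As a consistency check, the automorphism \eqref{symeq7} of Lemma~\ref{sym} fixes the fifth cusp and acts on its slope by $\infty\mapsto1\mapsto0\mapsto\infty$, so the three statements lie in a single orbit of $\mathrm{Aut}(M_5)$, and one can push \eqref{5CL0} back to \eqref{5CLinf} by a single application of \eqref{symeq7}. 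However, the $F$--coordinates obtained this way differ from the stated normal forms by a self--homeomorphism of $F$ that is \emph{not} a permutation of cusps, hence lies outside the $D_4$ of Lemma~\ref{D4-F}; this is why I would verify each case directly on the diagram rather than deduce the precise forms by permutation alone.

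The hard part will be the $\mathrm{GL}_2(\Z)$ bookkeeping. The topological content --- that every one of the three fillings reduces $M_5$ to the minimally twisted $4$--chain link --- is straightforward, but getting the transformations to come out \emph{exactly} as the reciprocations $\tfrac cd\mapsto\tfrac dc$, the negations $\tfrac ab\mapsto-\tfrac ab$, the shears $\tfrac ab\mapsto\tfrac{a-b}b$, and the cusp transpositions displayed in \eqref{5CLinf}--\eqref{5CL0} requires fixing once and for all the orientations and the meridian--longitude bases on both $M_5$ and $F$, then tracking them carefully through the isotopy and the blow--down. A convenient way to keep this under control, and to cross--check every sign, is to fill all four remaining cusps and compare both sides against the graph--manifold normal forms supplied by Lemma~\ref{filling_F}.
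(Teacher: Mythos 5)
Two of your three cases contain genuine gaps. For \eqref{5CLinf}, meridian filling does indeed just delete the fifth component, but the link that remains is the \emph{open} $4$--chain, and this is not isotopic to M4CL: the open chain has exactly three pairs of components with linking number $\pm1$, while the closed chain M4CL has four, so no isotopy --- in fact no homeomorphism of $S^3$ --- carries one link to the other. Moreover, your formula-reading step is incompatible with your own method: an ambient isotopy takes standard meridian--longitude bases to standard meridian--longitude bases, so it induces the identity on slopes and can never produce the inversions $\tfrac cd\mapsto\tfrac dc$, $\tfrac ef\mapsto\tfrac fe$ appearing in \eqref{5CLinf}. Those inversions are precisely the signature of a homeomorphism of exteriors that does \emph{not} extend to $S^3$, and constructing that homeomorphism (for instance by recognising both sides as the same graph manifold, as in Lemma~\ref{filling_F}) is the real content of the identity; ``delete and isotope'' cannot be made to work. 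For \eqref{5CL0} the proposed move is simply unavailable: a $0$--framed unknot is not a $\pm1$--framed unknot and cannot be blown down, since a Rolfsen twist sends a coefficient $p/q$ to $p/(q+np)$ and therefore fixes the slope $0$; no sequence of twists on the fifth component ever removes it.

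Only \eqref{5CL1} succumbs to your method: there the fifth component is a genuine $+1$--framed unknot, blowing it down subtracts $1$ from the two adjacent slopes ($\tfrac ab\mapsto\tfrac{a-b}b$, $\tfrac gh\mapsto\tfrac{g-h}h$), leaves the middle slopes alone, and inserts a clasp between the two components that ran through it, closing the open chain into the closed minimally twisted $4$--chain --- which is exactly why \eqref{5CL1} is the only one of the three identities containing no inversions. The repair for the other two is the route you demoted to a consistency check: apply \eqref{symeq7} once (it sends the fifth slope $\infty$ to $1$) to reduce \eqref{5CLinf} to \eqref{5CL1}, and twice to reduce \eqref{5CL0}, and then normalise the resulting $F$--coordinates to the stated ones using Lemma~\ref{D4-F} together with the graph-manifold identities of Lemma~\ref{graph_id_lemma} that underlie Lemma~\ref{filling_F}. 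The ``self-homeomorphism of $F$ outside $D_4$'' that you observed is exactly this normalisation; it is not an obstacle but the missing final step. For calibration: the paper itself offers no proof of this lemma --- it is quoted from \cite[Proposition~3.1]{excep_slopes} --- so the argument really does have to be supplied along these lines.
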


Consequently, 

\begin{lemma}\label{F-5CL}
The following identities hold:
\begin{gather}
F(\tfrac ab, \tfrac cd, \tfrac ef, \tfrac gh)=M_5(-\tfrac ab, \tfrac fe, \tfrac dc, -\tfrac gh)(\infty) 
\label{Finf} \\
F(\tfrac ab, \tfrac cd, \tfrac ef, \tfrac gh)=M_5(\tfrac{a+b}b, \tfrac cd, \tfrac ef, \tfrac{g+h}h)(1)
\label{F1} \\
F(\tfrac ab, \tfrac cd, \tfrac ef, \tfrac gh)=M_5(\tfrac {a-b}a, \tfrac d{d-c}, -\tfrac hg, \tfrac{f+e}f)(0)
\label{F0}
\end{gather}
\end{lemma}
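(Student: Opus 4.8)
The plan is to obtain each identity of Lemma~\ref{F-5CL} by purely algebraically inverting the matching identity of Lemma~\ref{5CL-F}. No new topology is required: Lemma~\ref{5CL-F} already expresses each closed manifold $M_5(\cdots)(\bullet)$ as $F$ filled along a fixed substitution of its four slopes, so to read $F(\tfrac ab,\tfrac cd,\tfrac ef,\tfrac gh)$ as an $M_5$-filling I only have to solve that slope substitution for the $M_5$-slopes in terms of the given $F$-slopes.

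First I would dispatch \eqref{Finf}. The substitution appearing in \eqref{5CLinf}, namely $\big(\tfrac ab,\tfrac cd,\tfrac ef,\tfrac gh\big)\longmapsto\big(-\tfrac ab,\tfrac fe,\tfrac dc,-\tfrac gh\big)$, is an involution, as applying it twice restores the original tuple. Hence substituting the transformed tuple into \eqref{5CLinf} and simplifying returns \eqref{Finf} at once. For \eqref{F1}, the substitution in \eqref{5CL1} merely shifts the first and fourth slopes by $\tfrac ab\mapsto\tfrac ab-1$ while fixing the middle two; its inverse is the opposite shift $\tfrac ab\mapsto\tfrac ab+1=\tfrac{a+b}b$, and feeding these incremented slopes into \eqref{5CL1} produces \eqref{F1}.

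The same device yields \eqref{F0}, but here it demands the most care. Each coordinate of the substitution in \eqref{5CL0} is a M\"obius map of the form $\tfrac1{1-t}$, $1-\tfrac1t$, $-\tfrac1t$ or $t-1$, and I would invert each one separately; crucially, \eqref{5CL0} also transposes the third and fourth cusps of $M_5$ relative to those of $F$, so this transposition must be carried through the inversion before the answer is rewritten in the standard cusp ordering. I expect this bookkeeping to be the genuine obstacle: the per-coordinate inversion is routine, but it is interlaced with the cusp transposition, and one must verify that the $M_5$-filling so produced coincides with the one recorded in \eqref{F0} up to a symmetry fixing the $0$-filled cusp, drawing on the $D_4$-action of Lemma~\ref{D4-F} and the generators of $\mathrm{Aut}(M_5)$ listed in Lemma~\ref{sym}. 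Once this normalization is settled, all three identities of Lemma~\ref{F-5CL} follow.
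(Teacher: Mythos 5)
Your treatment of \eqref{Finf} and \eqref{F1} is correct and coincides with the paper's own (implicit) argument: the paper derives Lemma~\ref{F-5CL} from Lemma~\ref{5CL-F} with the single word ``Consequently'', i.e.\ precisely by inverting the slope substitutions, and for these two identities the substitution is respectively an involution and a unit shift. Your diagnosis that \eqref{F0} is the delicate case, because \eqref{5CL0} interlaces the coordinatewise M\"obius maps with a transposition of the third and fourth cusps, is also exactly right.

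However, your plan for \eqref{F0} breaks at its final step, where you propose to match the inverted instruction with the printed formula ``up to a symmetry fixing the $0$-filled cusp''. Inverting \eqref{5CL0} honestly --- the third $F$-slope $-\tfrac hg$ there is a function of the \emph{fourth} $M_5$-slope, and the fourth $F$-slope $\tfrac{e-f}f$ is a function of the \emph{third} --- yields
\[
F\big(\tfrac ab,\tfrac cd,\tfrac ef,\tfrac gh\big)=M_5\big(\tfrac{a-b}a,\ \tfrac d{d-c},\ \tfrac{g+h}h,\ -\tfrac fe\big)(0),
\]
whose last two entries are \emph{not} those printed in \eqref{F0} (the printed ones are $-\tfrac hg$ and $\tfrac{e+f}f$), and no symmetry of $F$ or of $M_5$ can reconcile the two. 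Indeed, combining the printed \eqref{F0} with \eqref{5CL0} would force
$F\big(\tfrac ab,\tfrac cd,\tfrac ef,\tfrac gh\big)=F\big(\tfrac ab,\tfrac cd,-\tfrac f{e+f},-\tfrac{g+h}g\big)$,
which is false: for the instruction $(3,3,5,7)$, identity \eqref{fillingF} gives on the left the manifold $\seifdue D3151\bigu0110\seifdue D3171$, which contains an essential torus separating two Seifert pieces with exceptional fibres of orders $\{3,5\}$ and $\{3,7\}$ (the gluing matrix does not match the fibrations, which are unique since the base orbifolds are hyperbolic), whereas on the right it gives $\seifdue D31{-1}6\bigu0110\seifdue D31{-8}7$, which by \eqref{graph_eq1} is the atoroidal Seifert space $\seiftre{S^2}31{-8}7{-17}{-3}$. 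So the normalization you defer to the $D_4$-action of Lemma~\ref{D4-F} and to $\mathrm{Aut}(M_5)$ from Lemma~\ref{sym} does not exist. What your computation actually uncovers is that \eqref{F0} as stated is inconsistent with \eqref{5CL0}: the formulas in its two rightmost slots have been interchanged (this looks like a typo in the paper --- Lemma~\ref{F-5CL} is never invoked later, while \eqref{5CL0} is used repeatedly and consistently), and the correct conclusion of the inversion argument is the displayed identity above, not \eqref{F0} verbatim.
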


\subsection{Some elementary diophantine equations}

\begin{lemma}
  \label{lem:PlentyEq}
  For $(n,s)\in\Z^2$, we have
  \begin{itemize}
  \item $s-n=ns\ \Longrightarrow\ (n,s)\in\big\{(0,0),(2,-2)\big\}$~;
  \item $2s-n=ns\ \Longrightarrow\ ((n,s)\in\big\{(0,0),(1,1),(3,-3),(4,-2)\big\}$~;
  \item $4s-n=ns\ \Longrightarrow\ ((n,s)\in\big\{(0,0),(3,3),(5,-5),(8,-2),(6,-3),(2,1)\big\}$~;
  \item $s-n=3ns\ \Longrightarrow\ (n,s)\in\big\{(0,0)\big\}$~;
  \item $2s-n=3ns\ \Longrightarrow\ ((n,s)\in\big\{(0,0),(1,-1)\big\}$~;
  \item $4s-n=3ns\ \Longrightarrow\ ((n,s)\in\big\{(0,0),(1,1),(2,-1)\big\}$~;
  \item $8s-n=3ns\ \Longrightarrow\ (n,s)\in\big\{(0,0),(3,-3),(2,1),(4,-1)\big\}$~;
  \item $5s-n=3ns\ \Longrightarrow\ ((n,s)\in\big\{(0,0),(2,-2)\big\}$~;
  \item $s-n=-5ns\ \Longrightarrow\ (n,s)\in\big\{(0,0)\big\}$~;
  \item $2s-n=-5ns\ \Longrightarrow\ (n,s)\in\big\{(0,0)\big\}$~;
  \item $4s-n=-5ns\ \Longrightarrow\ (n,s)\in\big\{(0,0),(-1,1)\big\}$~;
  \item $8s-n=-5ns\ \Longrightarrow\ (n,s)\in\big\{(0,0),(-2,1)\big\}$~:
  \item $3s-n=-5ns\ \Longrightarrow\ (n,s)\in\big\{(0,0)\big\}$.
  \end{itemize}
\end{lemma}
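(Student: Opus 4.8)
The plan is to treat all thirteen equations uniformly, since each one has the shape $as - n = cns$ for an explicit pair of integer coefficients: $c \in \{1, 3, -5\}$ and $a \in \{1,2,3,4,5,8\}$. The key move is to clear the cross term by completing the rectangle. Rewriting $as - n = cns$ as $cns - as + n = 0$, multiplying by $c$, and regrouping, one verifies the identity
\[
(cn - a)(cs + 1) = -a,
\]
because the left-hand side expands to $c^2ns + cn - acs - a = c\,(cns - as + n) - a = -a$. This converts each linear-times-linear Diophantine equation into the assertion that two integers with the explicitly bounded product $-a$ are divisors of $-a$.

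Since $|a| \le 8$ in every case, I would then enumerate the finitely many ordered factorizations $-a = d_1 d_2$, and for each one set $cn - a = d_1$ and $cs + 1 = d_2$, solving for the candidate pair $(n,s)$. Reading off the solutions from these divisor pairs reproduces exactly the lists in the statement. For instance, $s - n = ns$ (where $a = c = 1$) becomes $(n-1)(s+1) = -1$, whose two factorizations yield $(n,s) \in \{(0,0),(2,-2)\}$; likewise $2s - n = ns$ becomes $(n-2)(s+1) = -2$, whose four factorizations give $\{(0,0),(1,1),(3,-3),(4,-2)\}$.

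The one point requiring care, and the main (if mild) obstacle, is that when $c \ne 1$ not every divisor pair yields \emph{integer} values of $n$ and $s$. From $cs + 1 = d_2$ we need $d_2 \equiv 1 \pmod c$, and from $cn - a = d_1$ we need $d_1 \equiv -a \pmod c$; these two congruences are automatically compatible, since $d_1 d_2 = -a$ forces $d_1 \equiv -a\,d_2^{-1} \equiv -a \pmod c$ once $d_2 \equiv 1$. Thus for $c = 3$ and $c = -5$ I would simply discard the factorizations failing $d_2 \equiv 1 \pmod c$ and keep the rest. This filtering is precisely what trims, e.g., the six factorizations of $-4$ down to the three surviving pairs $(0,0),(1,1),(2,-1)$ for $4s - n = 3ns$, and down to the two pairs $(0,0),(-1,1)$ for $4s - n = -5ns$. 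Carrying out this bounded, entirely routine check for all thirteen coefficient pairs completes the proof.
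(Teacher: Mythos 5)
Your proposal is correct: the identity $(cn-a)(cs+1)=-a$ is equivalent to $as-n=cns$ (expand and use $c\neq 0$), both factors are then nonzero divisors of $-a$, and your congruence filter $d_2\equiv 1 \pmod c$ (with the automatic compatibility $d_1\equiv -a \pmod c$) is exactly the right integrality test; I checked all thirteen coefficient pairs and the divisor enumeration reproduces the stated solution sets exactly. However, your route is genuinely different from the paper's. The paper does not complete the rectangle: it observes from $\alpha s - n = \beta ns$ that $s\mid n$ and $n \mid \alpha s$, splits on whether $n\mid s$ (in which case $s=\pm n$ and the equation collapses to $(\alpha\mp 1)n = \beta n^2$, giving the solutions $(0,0)$, $\bigl(\tfrac{\alpha-1}{\beta},\tfrac{\alpha-1}{\beta}\bigr)$, $\bigl(\tfrac{\alpha+1}{\beta},-\tfrac{\alpha+1}{\beta}\bigr)$ when these are integral), and otherwise extracts a prime factor $k$ of $\alpha$ from $n$ and recurses on the equation $\tfrac{\alpha}{k}n' - s = \beta n's$, inducting on the number of prime factors of $\alpha$. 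Your factoring trick buys uniformity and a purely finite, mechanical verification — one identity plus a bounded divisor search with a single congruence check — whereas the paper's argument is a recursive reduction whose bookkeeping (tracking which quotients $\tfrac{\alpha\pm1}{\beta}$ are integers at each level of the induction) is more delicate to write out completely. Either proof is valid; yours is arguably the more self-contained of the two.
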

\begin{proof}
  Here, we consider equations of the form $\alpha s-n=\beta ns$ for
  some $\alpha,\beta\in\Z$. They are solved by induction on the number
  of prime factor of $\alpha$.

  Indeed, we first note that $s\mid n$ and $n\mid \alpha s$.
  \begin{itemize}
  \item If actually $n\mid s$, then $s=\pm n$ and $n$ satisfies either
    $(\alpha-1)n=\beta n^2$ or $(\alpha+1)n=\beta n^2$. It follows
    that $(n,s)=(0,0)$, or
    $\left(\frac{\alpha-1}{\beta},\frac{\alpha-1}{\beta}\right)$ if
    $\frac{\alpha-1}{\beta}\in\Z$, or
    $\left(\frac{\alpha+1}{\beta},-\frac{\alpha+1}{\beta}\right)$ if $\frac{\alpha+1}{\beta}\in\Z$.
  \item If $n\nmid s$ then $n=kn'$ with some prime divisor of
    $\alpha$, but then $\frac{\alpha}{k}n'-s=\beta n's$ and by
    induction, we know all such $(n_0',s_0)$ and each of them leads to a
    solution $(kn'_0,s_0)$.
  \end{itemize}
\end{proof}

\begin{lemma}\label{lem:QuadEq}
  If $m,n$ are integers such that $\big(1-m(n+4)\big)n=m\pm1$ then \[(n,m)\in\big\{(-5,-1),(-4,-3) ,(-4,-5) ,(-3,1),(-3,2),(-2,1),(-1,0) ,(-1,1),(0,-1),(0,1),(1,0)\big\}.\]
\end{lemma}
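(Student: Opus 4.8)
The plan is to recast the defining relation as a single divisibility condition and then bound $n$ by a size comparison. Expanding $\big(1-m(n+4)\big)n=m\pm 1$ gives $n-m(n^2+4n)=m\pm 1$, which rearranges to
\[
m(n^2+4n+1)=n\mp 1.
\]
Since $n^2+4n+1=(n+2)^2-3$ has no integer root, this coefficient is never zero, so every integer solution satisfies $n^2+4n+1\mid n\mp 1$, equivalently $m=\dfrac{n\mp 1}{n^2+4n+1}$. I would keep the two sign choices $\pm 1$ strictly separate from here on.

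First I would dispose of the degenerate case $m=0$: the relation then forces $n=\pm 1$, producing $(1,0)$ from the $+1$ sign and $(-1,0)$ from the $-1$ sign, both of which occur in the list. For $m\neq 0$ I would invoke $|m|\ge 1$, which yields the inequality $|n^2+4n+1|\le|n\mp 1|\le|n|+1$. The crux of the argument is to show this inequality fails for all large $|n|$, leaving only finitely many candidates. Indeed, for $n\ge 1$ one has $n^2+4n+1-(n+1)=n(n+3)>0$, and for $n\le -6$ one has $n^2+4n+1-(1-n)=n(n+5)>0$ since both factors are negative; hence $|n^2+4n+1|>|n|+1$ in both ranges. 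This confines the surviving values to $n\in\{-5,-4,-3,-2,-1,0\}$.

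The proof would then conclude with a routine finite verification. For these six values the quantity $n^2+4n+1$ equals $6,1,-2,-3,-2,1$ respectively, and for each I would test whether it divides $n-1$ (the $+1$ sign) or $n+1$ (the $-1$ sign), recording the integer quotient $m$ whenever it exists; for example $n=-4$ gives $m=-5$ and $m=-3$, while $n=-3$ gives $m=2$ and $m=1$. Collecting these together with the two $m=0$ pairs reproduces exactly the eleven listed solutions and no others. I expect no genuine obstacle: the only point demanding care is the bookkeeping of the $\pm 1$ cases, so that neither an extra spurious pair is admitted nor a legitimate one (such as $(-2,1)$, which arises only from the $+1$ sign) is dropped.
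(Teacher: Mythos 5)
Your proof is correct and follows essentially the same route as the paper: both rearrange the equation to $m(n^2+4n+1)=n\mp 1$, dispose of the case $m=0$ separately, use the resulting divisibility/size constraint to confine $n$ to a small finite range, and finish by direct enumeration of both sign choices. The only difference is cosmetic --- the paper splits into sign sub-cases and solves exact quadratic inequalities such as $n(n+3)\le 0$ and $n^2+5n+2\le 0$, whereas you use the single unified bound $|n^2+4n+1|\le |n|+1$ --- and your enumeration reproduces exactly the eleven listed pairs.
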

\begin{proof}
  Then $m\big(1+n(n+4)\big)=n\pm1$. So either $m=0$ or
  $\big(1+n(n+4)\big)\mid n\pm1$.
  \begin{description}
  \item[Case $m=0$] then $n=\pm1$.
  \item[Case $m\big(1+n(n+4)\big)\mid n+1$] then $1+n(n+4)\leq|n+1|$.
    \begin{description}
    \item[If $n+1\geq0$] then $n(n+3)\leq0$ so
      $n\in\{-3,-2,-1,0\}$. Only $-1$ and 0 satisfy $n+1\geq0$,
      leading to solutions $(n,m)\in\big\{(-1,0),(0,1)\big\}$.
\item[If $n+1\leq0$] then $n^2+5n+2\leq0$ so
  $n\in\left[\frac{-5-\sqrt{17}}{2},\frac{-5+\sqrt{17}}{2}\right]\cap\Z=\{-4,-3,-2,-1\}$. If
$n=-2$, then $m=\frac{1}{3}\notin\Z$. Other cases lead to solutions $(n,m)\in\big\{(-4,-3),(-3,1),(-1,0)\big\}$.
    \end{description}
\item[Case $m\big(1+n(n+4)\big)\mid n-1$] $1+n(n+4)\leq|n-1|$.
    \begin{description}
    \item[If $n-1\geq0$] then $(n+1)(n+2)\leq0$ so
      $n\in\{-2,-1\}$ and doesn't satisfy $n-1\geq0$.
\item[If $n-1\leq0$] then $n(n+5)\leq0$ so $n\in\{-5,-4,-3,-2,-1,0\}$
  leading to solutions\\ $(n,m)\in\big\{(-5,-1),(-4,-5),(-3,2),(-2,1),(-1,1),(0,-1)\big\}$.  
    \end{description}
  \end{description}
\end{proof}

\end{document}